\newtheorem{thm}{Theorem}[section]
\theoremstyle{definition}
\newtheorem{definition}[thm]{Definition}
\newtheorem{deflem}[thm]{Lemma-Definition}
\newtheorem{defcor}[thm]{Corollary-Definition}
\newtheorem{rmk}[thm]{Remark}
\newtheorem*{thm*}{Theorem}
\newtheorem*{thx}{Acknowledgements}
\newtheorem*{definition*}{Definition}
\newtheorem*{question}{Question}
\newtheorem*{cor*}{Corollary}
\newtheorem{prop}[thm]{Proposition}
\newtheorem{cor}[thm]{Corollary}
\newtheorem*{prop*}{Proposition}
\newtheorem{lem}[thm]{Lemma}
\newtheorem*{thmA}{Theorem A}
\newtheorem*{thmB}{Theorem B}
\newtheorem*{corC}{Corollary C}
\newtheorem*{thmE}{Theorem E}
\newtheorem*{corD}{Corollary D}
\newtheorem*{corF}{Corollary F}
\newtheorem*{corG}{Corollary G}
\def\lquotient#1#2{%
\makeatletter
\lower.6ex\hbox{$#1$}\backslash\raise.3ex\hbox{$#2$}%
\makeatother
}
\def\rquotient#1#2{%
\makeatletter
\raise.6ex\hbox{$#1$}/\lower.2ex\hbox{$#2$}%
\makeatother
}
\newcommand{\bbR}{{\mathbb R}}
\newcommand{\bbZ}{{\mathbb Z}}
\newcommand{\ra}{\rightarrow}
\newcommand{\hra}{\hookrightarrow}
\def\rquotient#1#2{%
\makeatletter
\raise.3ex\hbox{$#1$}/\lower.3ex\hbox{$#2$}%
\makeatother
}	
\newcommand{\subjclass}[2][2010]{%
  \let\@oldtitle\@title%
  \gdef\@title{\@oldtitle\footnotetext{#1 \emph{Mathematics subject classification.} #2}}%
}
\newcommand{\keywords}[1]{%
  \let\@@oldtitle\@title%
  \gdef\@title{\@@oldtitle\footnotetext{\emph{Key words and phrases.} #1.}}%
}
\newcommand{\Address}{{
  \bigskip
  \small

   \textsc{Department of Mathematics, University of Vienna,
Oskar-Morgenstern-Platz 1, 1090 Vienna, Austria.}\par\nopagebreak
  \textit{E-mail address}: \texttt{alexandre.martin@univie.ac.at}

}}
\newdimen\bibindent
\title{\textbf{Acylindrical actions on CAT(0) square complexes}}  
\author{Alexandre Martin}
\date{}
\subjclass{ 20F65}
\keywords{ CAT(0) cube complexes, acylindrical actions, Higman group, Tits alternative}
\begin{document}
\maketitle

\begin{abstract} 
For group actions on hyperbolic CAT(0) square complexes, we show that the acylindricity of the action is equivalent to a weaker form of acylindricity phrased purely in terms of stabilisers of points, which  has the advantage of being much more tractable for actions on non-locally compact spaces.  For group actions on general CAT(0) square complexes, we show that an analogous characterisation holds for the so-called WPD condition.
As an application, we study the geometry of generalised Higman groups on at least $5$ generators, the first historical examples of finitely presented infinite groups without non-trivial finite quotients. We show that these groups act acylindrically on the CAT(-1) polygonal complex naturally associated to their presentation. As a consequence, such groups satisfy a strong version of the Tits alternative and are residually $F_2$-free, that is, every element of the group survives in a quotient that does not contain a non-abelian free subgroup.

\end{abstract}

\medskip

Acylindrical actions were first considered by Sela for groups acting on simplicial trees \cite{SelaAcylindrical}. In Sela's terminology, given a (minimal) action of a group on a simplicial tree, the action is said to be acylindrical if there exists an integer $k \geq 1$ such that no non-trivial element of the group fixes pointwise two points at distance at least $k$.   This definition was extended to actions on arbitrary geodesic metric spaces by Bowditch \cite{BowditchTightGeodesics}, in his study of the action of the mapping class group of a closed hyperbolic surface on its associated curve complex. Recall that an action of a group $G$ on a metric space $X$ is \textit{acylindrical} if for every $r \geq 0$ there exist constants $L(r), N(r) \geq 0$ such that for every points $x, y$ of $X$ at distance at least $L(r)$, there are at most $N(r)$ elements $h$ of $G$ such that $d(x, hx), d(y,hy) \leq r$. For $r=0$, one recovers Sela's definition of acylindricity, at least for torsion-free groups. As noticed by Bowditch \cite{BowditchTightGeodesics}, in the case of group actions on simplicial trees,  acylindricity is equivalent to this weaker acylindricity condition at $r=0$. 

Groups that act acylindrically on a hyperbolic spaces share many features with relatively hyperbolic groups, and techniques from dynamics in negative curvature are available to study them. Unfortunately, acylindrical actions on hyperbolic spaces seldom appear naturally in geometric group theory. However,  weaker but more frequent forms of acylindricity are sufficient to apply this circle of ideas, as noticed by various authors. In a nutshell, the general philosophy is the following: Given a group $G$ acting on a geodesic metric space $X$, if the space $X$ `looks hyperbolic in the direction of some loxodromic element $g$' and if $G$ acts on $X$ `acylindrically in the direction of $g$', then most of the machinery carries over to this situation. This was done for instance in \cite{BestvinaFujiwaraMappingClassGroups, SistoContractingElements, GruberSistoAcylindricallyHyperbolic, MinasyanOsinTrees, CapraceHumeAcylindrical, CantatLamyCremonaNormalSubgroups}. Osin showed the equivalence of all the weak notions of acylindricity involved \cite{OsinAcylindricallyHyperbolic} and proved them to be equivalent to the existence of a non-elementary action of the group on some (generally quite complicated) hyperbolic space. Groups satisfying those properties are now referred to as \textit{acylindrically hyperbolic} groups. 

To date, the most general criterion to show the acylindrical hyperbolicity of a group is the following: 

\begin{thm*}[see {\cite[Theorem H]{BestvinaBrombergFujiwara}}]
 Let $G$ be a group acting by isometries on a geodesic metric space $X$. Let $g$ be an infinite order element with quasi-isometrically embedded orbits  and assume that the following holds: 
 \begin{itemize}
  \item $g$ is a \textit{strongly contracting element}, that is, there exists a point $x$ of $X$ such that the closest-point projections on the orbit $\langle g \rangle x$ of the balls of $X$ that are disjoint from  $\langle g \rangle x$ have uniformly bounded diameter,
  \item $g$ satisfies the \textit{WPD condition}, that is, for every $r \geq 0$ there exist constants $m(r), N(r) \geq 0$ such that for every point $x$ of $X$, there are at most $N(r)$ elements $h$ of $G$ such that $d(x, hx), d(g^{m(r)}x,hg^{m(r)}x) \leq r$.
 \end{itemize}
Then $G$ is either virtually cyclic or acylindrically hyperbolic.
\end{thm*}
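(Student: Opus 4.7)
The plan is to build an auxiliary hyperbolic space on which $G$ acts with $g$ as a loxodromic WPD element, and then invoke Osin's characterisation (the equivalence between the existence of such an action and acylindrical hyperbolicity, up to the virtually cyclic case). The natural candidate for this hyperbolic space is the projection complex of Bestvina--Bromberg--Fujiwara, built from the $G$-translates of a quasi-axis of $g$.

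More precisely, fix a basepoint $x_0 \in X$ witnessing the strongly contracting hypothesis and set $Y_0 := \langle g \rangle x_0$. Since the orbit map for $g$ is a quasi-isometric embedding, $Y_0$ is a quasi-geodesic in $X$, and strong contraction guarantees that closest-point projection $\pi_{Y_0}: X \to Y_0$ has uniformly bounded image on any ball disjoint from $Y_0$. I would let $\mathcal{Y}$ be the set of $G$-translates $gY_0$ (indexed so that $G$ acts by permutations) and define, for each $Y \in \mathcal{Y}$, the projection $\pi_Y$ by equivariance. The first main task is to verify that the family $\{\pi_Y\}_{Y \in \mathcal{Y}}$ satisfies the Bestvina--Bromberg--Fujiwara projection axioms (P0)--(P2); the key point is that if $Y_1, Y_2, Y_3$ are distinct translates and $\pi_{Y_2}(Y_1), \pi_{Y_2}(Y_3)$ are far apart on $Y_2$, then any geodesic in $X$ from $Y_1$ to $Y_3$ must pass close to $Y_2$, after which strong contraction forces the relevant projections between $Y_1$ and $Y_3$ to lie in a bounded region near $\pi_{Y_1}(Y_2)$ and $\pi_{Y_3}(Y_2)$ respectively. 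For $K$ sufficiently large, this yields the projection complex $\mathcal{P}_K(\mathcal{Y})$, a hyperbolic quasi-tree on which $G$ acts by isometries.

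The second task is to show that $g$ acts as a loxodromic isometry on $\mathcal{P}_K(\mathcal{Y})$: the orbit $\{g^n Y_0\}$ produces vertices whose pairwise projection distances grow linearly in $|n|$, because the $g^n x_0$ are well-separated along the quasi-axis $Y_0$. Hence either $G$ stabilises $Y_0 \cup Y_0^{\pm}$ virtually (the virtually cyclic case), or $G$ acts non-elementarily on $\mathcal{P}_K(\mathcal{Y})$. The third task is to transfer the WPD condition from $X$ to $\mathcal{P}_K(\mathcal{Y})$: if $h \in G$ almost-fixes both $Y_0$ and $g^m Y_0$ in the projection complex, then $h$ almost-preserves the projections of $Y_0$ onto $g^m Y_0$ and vice-versa, and using the quasi-isometric embedding of $\langle g \rangle$-orbits together with strong contraction, this translates into $h$ moving two points of $X$ at large distance (namely suitable points on $Y_0$ and on $g^m Y_0$, chosen near the respective mutual projections) by a bounded amount. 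The WPD hypothesis for $g$ on $X$ then bounds the number of such $h$, which gives WPD on $\mathcal{P}_K(\mathcal{Y})$.

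The main obstacle is the verification of the projection axioms, and in particular the clean extraction of the constants needed to run the BBF construction purely from the two hypotheses (quasi-isometric embedding of $\langle g \rangle$-orbits and strong contraction); the WPD transfer in the third task is likely delicate too, since one has to produce a genuine pair of far-apart points in $X$ whose mutual displacement controls displacement in $\mathcal{P}_K(\mathcal{Y})$, and this forces one to reason carefully about the geometry of nearest-point projections between distinct translates of the quasi-axis. Once these steps are in place, the conclusion follows by Osin's theorem: either $G$ is virtually cyclic, or its non-elementary action with loxodromic WPD element $g$ on the hyperbolic space $\mathcal{P}_K(\mathcal{Y})$ makes it acylindrically hyperbolic.
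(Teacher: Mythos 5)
This statement is not proved in the paper at all: it is quoted verbatim as Theorem H of Bestvina--Bromberg--Fujiwara and used as a black box, so there is no internal proof to compare against. Your outline is, in substance, the strategy of the cited source itself: form the projection complex (quasi-tree) from translates of a contracting quasi-axis, verify the projection axioms via strong contraction, show $g$ is loxodromic and WPD there, and conclude with Osin's equivalence. As a sketch it is essentially right, with one technical point you should not gloss over: the indexing family $\mathcal{Y}$ cannot simply be the set of translates $hY_0$, because a translate lying at finite Hausdorff distance from $Y_0$ (e.g.\ by an element of the setwise stabiliser or, more generally, of the elementary closure $E(g)$) would have unbounded projection onto $Y_0$, violating the very first projection axiom. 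One must take $\mathcal{Y}$ to be indexed by cosets of $E(g)$ (equivalently, identify parallel translates), and it is precisely the WPD hypothesis that guarantees $E(g)$ is virtually cyclic so that this quotienting is well behaved and the dichotomy ``$G$ virtually cyclic or the action on the projection complex is non-elementary'' comes out correctly. With that amendment, and granting the (genuinely delicate, as you note) verification of the axioms and of the WPD transfer, the argument is the standard one.
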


However, checking the acylindricity of an action, or checking that a given hyperbolic element satisfies the WPD condition, is generally tedious. Indeed, while such conditions for actions on simplicial trees boil down to conditions about stabilisers of pairs of points, these conditions involve, for general actions, `coarse' stabilisers of pairs of points, that is, group elements moving a pair of points by a uniformly bounded amount. And while stabilisers of pairs of points lead us to study the set of geodesics between two given points, coarse stabilisers of pairs of points  requires a fine understanding of the set of geodesics \textit{ between metric balls}. In the case of actions on non-locally compact spaces of dimension at least $2$, understanding such sets of geodesics is a highly non-trivial problem.

Let us illustrate these complications with the case of the mapping class group of a closed hyperbolic surface acting on its curve complex. Two vertices of the curve complex at distance at least $3$ correspond to curves that fill the surface. Thus, the subgroup fixing two such vertices is finite and, what is more, of uniformly bounded cardinality by a theorem of Hurwitz and the Nielsen Realisation Theorem.  In particular, such an action is weakly acylindrical. By contrast, the original proof of the acylindricity of the action by Bowditch required the introduction of the so-called \textit{tight geodesics} of the curve complex and a fine understanding of their combinatorial geometry \cite{BowditchTightGeodesics}. \\

When trying to prove the acylindrical hyperbolicity of a group through its action on a given geodesic metric space, a natural approach is to identify classes of group elements, defined in terms of the geometry of the space acted upon and the dynamics of the action at hand, which are good candidates for being strongly contracting and satisfying the WPD condition. 

In this article however, we are interested in a slightly different, but related question. We want to find weaker, but more tractable, notions of acylindricity and of the WPD condition that make sense for general actions on geodesic metric spaces, which would nonetheless be equivalent to the usual notions for actions on sufficiently well behaved classes of spaces, paralleling the aforementioned situation for actions on simplicial trees. Here, `tractable' should be interpreted as `phrased in terms of stabilisers of points (as in Sela's original definition) rather than in terms of coarse stabilisers of points'. Let us make this question more precise:

\begin{question}\begin{itemize}
\item Is there a weaker  form of acylindricity for groups acting on geodesic metric spaces, phrased purely in terms of stabilisers of points, such that, for some nice classes of spaces, such a weaker form of acylindricity is equivalent to the acylindricity of the action? 

\item Is there a weaker  form of acylindricity for groups acting on geodesic metric spaces, phrased purely in terms of stabilisers of points, such that for such actions on some nice classes of spaces, \textit{every} strongly contracting element $g$ automatically satisfies the WPD condition?  

\item Is there a weaker form of the WPD condition for groups acting on geodesic metric spaces, phrased purely in terms of stabilisers of points, such that, for some nice classes of spaces, \textit{every} strongly contracting element $g$ that satisfies such a weak form of the WPD condition also satisfies the WPD condition? 
\end{itemize}
\end{question}

Weak forms of acylindricity have been considered by various authors, for instance Delzant \cite{DelzantMonodromies} and Hamenst\"adt \cite{HamenstadtWeakAcylindricity}. In this article, we will be interested in the following weak form of acylindricity, which is the case $r=0$ mentioned earlier in the usual definition of acylindricity: 

\begin{definition*}[weak acylindricity]
 Let $G$ be a group acting on a geodesic metric space $X$. We say that the action is \textit{weakly acylindrical} if there exist constants $L, N \geq 0$ such that two points of $X$ at distance at least $L$ are pointwise fixed by at most $N$ elements of $G$. 
\end{definition*}

It should be noted that for groups with a uniform bound on the size of their finite subgroups, weak acylindricity amounts to requiring that two points sufficiently (and uniformly) far apart are fixed by only finitely many elements.

Such a weaker notion of acylindricity is particularly useful when dealing with group actions on polyhedral complexes for which we have a good understanding of the inclusions of face stabilisers. For instance, an amalgamated product of torsion-free groups of the form $A \underset{C}{*} B$, where $B$ embeds malnormally in $A$, acts weakly acylindrically on its Bass--Serre tree (this example appeared in \cite{SelaAcylindrical}). 
More generally, weak acylindricity is  well suited to study groups presented as fundamental groups of complexes of groups, as we will illustrate below. \\

While acylindricity is a priori much stronger than the weak acylindricity considered here, the main goal of this paper is to show that these notions are in fact equivalent when dealing with actions on particularly well behaved spaces, generalising Bowditch's observation for actions on simplicial trees. Our first result is the following:

\begin{thmA}
 Let $G$ be a group acting weakly acylindrically on a hyperbolic CAT(0) square complex. Then the action is acylindrical. 
\end{thmA}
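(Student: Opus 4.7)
The plan is to prove acylindricity by turning coarse fixed data into genuine pointwise fixed data, and then invoking weak acylindricity. Fix $r \geq 0$, the weak acylindricity constants $L_0, N_0$, and the hyperbolicity constant $\delta$. I aim to find $L(r), N(r)$ such that for $x, y$ vertices with $d(x,y) \geq L(r)$, the set $S = \{h \in G : d(x,hx), d(y,hy) \leq r\}$ satisfies $|S| \leq N(r)$.

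The first observations are general CAT(0) facts. By convexity of the displacement function $z \mapsto d(z, hz)$, every point on the CAT(0) geodesic $[x,y]$ is moved by at most $r$, so $\tau(h) \leq r$. Since every isometry of a CAT(0) cube complex is semi-simple, each $h \in S$ is either elliptic (with non-empty convex fixed set $F_h$) or loxodromic (with an axis $\gamma_h$). In both cases the $r$-sublevel set $\{z : d(z,hz) \leq r\}$ is a non-empty convex set containing $[x,y]$.

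The core step is to associate to each $h \in S$ a pair of vertices $(v_h, w_h)$ fixed pointwise by $h$, with $d(v_h, w_h) \geq L_0$ and $d(v_h, x), d(w_h, y)$ bounded by some $C(r, \delta)$. In the elliptic case, $v_h, w_h$ are obtained by projecting $x, y$ onto the convex fixed subcomplex $F_h$, with the projection distance controlled by hyperbolicity. In the loxodromic case, one uses the stability of the axis $\gamma_h$ (since $\tau(h) \leq r$ and $X$ is hyperbolic) together with the 2-dimensionality: hyperplanes of $X$ are trees, and a loxodromic element with small translation length near $[x,y]$ must fix hyperplanes whose carriers contain vertices fixed by $h$. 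The step uses critically that $X$ is a \emph{square} complex, so that after locating a hyperplane stabilised setwise by $h$, we only need to invoke the (classical) weak-to-strong acylindricity promotion on the \emph{tree} given by this hyperplane.

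The main obstacle is the final counting step: without local finiteness, the balls around $x$ and $y$ contain infinitely many vertices, so one cannot simply count the pairs $(v_h, w_h)$ as lying in a ball. My plan is to rigidify the assignment $h \mapsto (v_h, w_h)$ using the hyperplane structure. By hyperbolicity (stability of geodesics), the image $h[x,y]$ is uniformly close to $[x,y]$ along its middle portion; hence the bijection $h \colon \mathcal{H}(x,y) \to \mathcal{H}(hx, hy)$ agrees with a translation on all but boundedly many hyperplanes, with translation amount at most $r$. This reduces the number of possible ``types'' for $h$ to a bounded number, each type forcing $(v_h, w_h)$ to lie in a specific small subcomplex determined by the induced hyperplane data. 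Combining a finite count of types with the $N_0$-bound from weak acylindricity applied to each pair $(v_h, w_h)$, one obtains the required bound $N(r)$.

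The delicate point is ensuring that the bound on the number of ``hyperplane types'' is genuinely independent of the local geometry near $x$ and $y$: this is where the combination of hyperbolicity (to force coarse order preservation on the linearly ordered set of hyperplanes crossing $[x,y]$) and the 2-dimensional cubical structure (to reduce stabilisers of hyperplane pairs to actions on trees) is essential, and is why the theorem is restricted to hyperbolic CAT(0) \emph{square} complexes.
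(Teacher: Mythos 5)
There is a genuine gap at the core step. You propose to attach to each individual $h\in S$ a pair of vertices $(v_h,w_h)$ that $h$ fixes \emph{pointwise}, but such a pair need not exist: an element moving $x$ and $y$ by at most $r$ can be loxodromic (with translation length at most $r$), in which case it fixes no point of $X$ at all and need not stabilise any hyperplane near $[x,y]$ either, so the phrase ``carriers contain vertices fixed by $h$'' has no content in that case. Even in the elliptic case, the claim that ``the projection distance is controlled by hyperbolicity'' is unjustified: the sublevel set $\{z: d(z,hz)\le r\}$ is convex and can extend arbitrarily far from $\mathrm{Fix}(h)$ (already in $\mathbb{H}^2$ a rotation by a tiny angle $\theta$ moves points at distance about $\log(1/\theta)$ from the centre by less than $r$), and in a non-locally-compact square complex neither $r$ nor $\delta$ gives an a priori bound. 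Since the assignment $h\mapsto(v_h,w_h)$ underlies both your construction and your final count, the argument does not get off the ground.

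The paper's proof circumvents exactly this obstruction by never producing fixed points for a single $h$. For each $h$ one fills the quadrangle on $x,y,hx,hy$ by a reduced disc diagram between $\gamma=[x,y]$ and $h\gamma$; combinatorial Gauss--Bonnet shows such a quadrangle has at most $4$ singularities, hence decomposes into almost Euclidean pieces (corridors or staircases) of width bounded in terms of $r$ and the no-grid constant, and these pieces embed isometrically into $\mathbb{Z}^2$. Because bounded-size subcomplexes of $\mathbb{Z}^2$ come in only finitely many shapes, a pigeonhole over these shapes (this is what replaces your vague ``finite count of hyperplane types'' and is where non-local-compactness is genuinely defeated) yields $N$ elements $h_i$ whose actions \emph{coincide} on a common subsegment of length $L$ of $\gamma$; the ratios $h_1^{-1}h_i$ then fix that segment pointwise, contradicting weak acylindricity. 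If you want to salvage your outline, the essential correction is to replace ``each $h$ fixes two far-apart points'' by ``among sufficiently many $h$, some ratio $h^{-1}h'$ fixes two far-apart points'', and some classification of the fillings between $\gamma$ and $h\gamma$ (the paper's corridors and staircases) is needed to make that pigeonhole run.
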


As an application, we consider \textit{generalised Higman groups}, as introduced by Higman \cite{HigmanGroup}. The generalised Higman groups $H_n, n \geq 4$, were defined by Higman by generators and relations: 
$$H_n:= \langle a_i, i \in \bbZ / n \bbZ ~|~ a_ia_{i+1}a_i^{-1}= a_{i+1}^{2}, i \in \bbZ / n \bbZ  \rangle,$$
These groups are historically the first examples of finitely presented infinite groups without non-trivial finite quotients \cite{HigmanGroup}. In \cite{MartinHigmanCubical}, the author studied the action of $H_4$ on a (non-hyperbolic) CAT(0) square complex naturally associated to its presentation. For $n \geq 5$, $H_n$ acts cocompactly on a CAT(-1) polygonal complex $X_n$, and in particular on a hyperbolic CAT(0) square complex by taking an appropriate subdivision. Such an action is easily shown to be weakly acylindrical. In particular, we obtain the following result: 

\begin{thmB}
 For $n \geq 5$, the action of $H_n$ on $X_n$ is acylindrical.
\end{thmB}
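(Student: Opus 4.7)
The strategy is to reduce Theorem B to Theorem A by realising $X_n$ as a hyperbolic CAT(0) square complex (after subdivision) and verifying that the action is weakly acylindrical. First, I would identify $X_n$ as the development of a polygon of groups whose underlying polygon is a regular right-angled hyperbolic $n$-gon (which exists precisely because $n \geq 5$), with the $n$ vertex groups being copies of the Baumslag--Solitar group $BS(1,2)$ coming from the relations $a_i a_{i+1} a_i^{-1} = a_{i+1}^2$, edge groups the infinite cyclic subgroups $\langle a_i\rangle \cong \bbZ$, and trivial face group. A standard verification of the link condition (Gromov--Gersten--Stallings) using the right angles shows that $X_n$ is CAT($-1$). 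Subdividing each polygonal face into squares, e.g.\ by joining the barycentre to the midpoints of the bounding edges, turns $X_n$ into a CAT(0) square complex on which $H_n$ still acts cocompactly, whose intrinsic metric is bi-Lipschitz equivalent to the original CAT($-1$) metric and hence remains Gromov-hyperbolic.

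Next, I would verify weak acylindricity. Point stabilisers on $X_n$ come in three flavours: they are trivial in the interior of a $2$-cell, infinite cyclic conjugates of some $\langle a_i\rangle$ in the interior of an edge, and conjugates of $BS(1,2)$ at a vertex. Since $X_n$ is CAT($-1$), geodesics between any two points are unique, so any element $g \in H_n$ fixing two points $x, y$ automatically fixes the whole geodesic segment $[x,y]$. Taking $L$ to be, say, twice the diameter of a $2$-cell, any two points at distance $\geq L$ are connected by a geodesic that must cross the interior of some $2$-cell, forcing $g$ to fix that cell. Since face stabilisers are trivial, this gives $g = 1$, and weak acylindricity holds with constants $L$ as above and $N = 1$.

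Finally, I would apply Theorem A to the action of $H_n$ on the subdivided square complex to conclude acylindricity there, and transfer this back to $X_n$ via the bi-Lipschitz equivalence between the two metrics (acylindricity being a bi-Lipschitz invariant).

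The main technical hurdle is the geometric claim that a sufficiently long geodesic meets the interior of a $2$-cell. While intuitively clear from the CAT($-1$) structure --- a geodesic confined to the $1$-skeleton would have to exhibit flat angles of $\pi$ at successive vertices, contradicting the strictly positive angle defects imposed by the CAT(1) link condition --- its rigorous verification requires a careful analysis of how CAT($-1$) geodesics pass through vertices of a polygonal complex. One could alternatively argue group-theoretically by showing that distinct edge stabilisers incident to a common $2$-cell intersect trivially (since the face group is trivial), so a geodesic that crosses two such edges already has trivial pointwise stabiliser. The remaining steps --- identifying $X_n$ as a polygon of groups, producing the square subdivision, and invoking Theorem A --- are formal once weak acylindricity is in hand.
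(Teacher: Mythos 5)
Your overall architecture --- realise $X_n$ as the development of a negatively curved polygon of groups, subdivide it into a hyperbolic CAT(0) square complex, verify weak acylindricity, and invoke Theorem A --- is exactly the paper's. The gap is in your verification of weak acylindricity. The claim that every sufficiently long geodesic of the CAT($-1$) complex must cross the interior of a $2$-cell is false: the link of a vertex of $X_n$ is an infinite bipartite graph with edges of angular length $\pi/2$, and any two edges of $X_n$ at a vertex $v$ that do not cospan a corner of a polygon are at distance at least $2\pi>\pi$ in that link, so their concatenation is a local geodesic and hence (the space being CAT(0)) a global one. Iterating this at successive vertices produces arbitrarily long geodesics lying entirely in the $1$-skeleton, whose pointwise stabilisers are intersections of \emph{edge} stabilisers, not face stabilisers. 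The CAT(1) link condition is only a lower bound on girth; it does not force the ``positive angle defect'' you invoke.

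Your proposed fallback --- that distinct edge stabilisers incident to a common $2$-cell or vertex intersect trivially ``since the face group is trivial'' --- is also false. At a vertex $v$ with $G_v=BS(1,2)=\langle a,b\mid aba^{-1}=b^2\rangle$, the incident edges correspond to cosets of $\langle a\rangle$ and of $\langle b\rangle$, and two distinct edges of $\langle b\rangle$-type have stabilisers such as $\langle b\rangle$ and $a\langle b\rangle a^{-1}=\langle b^2\rangle$, which intersect non-trivially. This is precisely why the paper's Lemma \ref{lem:Higman_weak_acylindricity} works with vertices at distance at least $3$ rather than $2$: an element fixing two such vertices fixes a path of three consecutive edges (fixed-point sets are subtrees of the $1$-skeleton because face stabilisers are trivial), and for the middle edge one of its two vertex-group inclusions is of $\langle a\rangle$-type; the specific algebra of $BS(1,2)$ (Lemma 2.1 of \cite{MartinHigmanCubical}) then shows that an $\langle a\rangle$-type edge stabiliser meets every other edge stabiliser at that vertex trivially. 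That algebraic input, not the triviality of the face group, is what makes the action weakly acylindrical, and it is the missing ingredient in your argument. The remaining steps (square subdivision, quasi-isometric transfer, and the appeal to Theorem A) are sound.
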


The acylindrical hyperbolicity of generalised Higman groups was first proved by Minasyan--Osin \cite{MinasyanOsinTrees}, using prior work of Schupp \cite{SchuppHigmanSQ}. While acylindrical hyperbolicity alone implies strong consequences for the group (see \cite{OsinAcylindricallyHyperbolic} and details therein), having this well understood acylindrical action of $H_n$ on a hyperbolic complex allows us to obtain results which do not follow solely from the abstract acylindrical hyperbolicity of the group. In particular, we obtain the following: 

\begin{corC}[Strong Tits alternative for generalised Higman groups] For $n \geq 5$, a non-cyclic subgroup of $H_n$ is either contained in a vertex stabiliser, hence embeds in $BS(1,2)$, or is acylindrically hyperbolic.
\end{corC}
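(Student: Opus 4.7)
The plan is to reduce Corollary C to the standard classification of subgroups under an acylindrical action on a hyperbolic space, using Theorem B as the key input. Let $H$ be a non-cyclic subgroup of $H_n$. Since $X_n$ is CAT($-1$), it is in particular Gromov hyperbolic, and since acylindricity of an action immediately passes to subgroups, the induced action of $H$ on $X_n$ is again acylindrical by Theorem B. Osin's trichotomy for subgroups acting acylindrically on hyperbolic spaces \cite{OsinAcylindricallyHyperbolic} then says that $H$ satisfies exactly one of the following: $H$ has bounded orbits; $H$ contains a loxodromic element and is virtually cyclic; or $H$ contains infinitely many pairwise independent loxodromic elements, in which case $H$ is itself acylindrically hyperbolic.

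For the first case, the Bruhat--Tits fixed-point theorem applies because $X_n$ is a complete CAT($0$) space, and so produces a global fixed point $p \in X_n$ for $H$. Since the $H_n$-action is cellular and the polygonal cells of $X_n$ are Euclidean, an isometry fixing a point of an open cell must fix that cell pointwise. Therefore $H$ is contained in the stabiliser of every vertex of the unique open cell of $X_n$ containing $p$, and hence in a vertex stabiliser. From the construction of $X_n$, every vertex stabiliser of the action is a conjugate of a subgroup $\langle a_i, a_{i+1}\rangle \cong BS(1,2)$, which gives the first alternative of the corollary.

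The virtually cyclic case is ruled out because $H_n$ is torsion-free. Indeed, $H_n$ admits a natural decomposition as the fundamental group of a graph of groups whose vertex groups are copies of $BS(1,2)$ and whose edge groups are infinite cyclic; all of these being torsion-free, so is $H_n$. Hence every virtually cyclic subgroup of $H_n$ is cyclic, and under the non-cyclicity assumption on $H$ we are left with the third alternative of Osin's trichotomy, in which $H$ is acylindrically hyperbolic.

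The substantial part of the work has already been absorbed into Theorem B; the remaining steps are short and standard. The only mildly delicate point is the passage from the existence of a geometric fixed point in $X_n$ to the combinatorial conclusion that $H$ is contained in a vertex stabiliser, which rests on the explicit description of the cell stabilisers of the $H_n$-action on $X_n$ and the fact that a polygonal cell is pointwise fixed as soon as a single interior point is.
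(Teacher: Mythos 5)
Your proposal is correct and follows essentially the route the paper intends: the paper states this corollary as an immediate consequence of the acylindricity of the $H_n$-action on the hyperbolic space $X_n$ (Theorem B) together with Osin's classification of groups acting acylindrically on hyperbolic spaces, and you have simply spelled out the three alternatives, handling the elliptic case via the CAT(0) fixed-point theorem and the description of cell stabilisers, and the virtually cyclic case via torsion-freeness of $H_n$. No gaps.
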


\begin{corD}
 For $n \geq 5$, the group $H_n$ is residually $F_2$-free, that is, every element of the group survives in a quotient that does not contain a non-abelian free subgroup.
\end{corD}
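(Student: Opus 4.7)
The plan is to combine the acylindricity from Theorem~B with the quotient theory of acylindrically hyperbolic groups, specifically the construction of (partial) periodic quotients. The key observation is that a periodic group---one in which every element has finite order---cannot contain a non-abelian free subgroup, since $F_2$ is torsion-free. Hence every periodic quotient of $H_n$ is automatically $F_2$-free, and it suffices to exhibit, for each non-trivial $g \in H_n$, a periodic quotient in which $g$ survives.

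First I would verify that $H_n$ is acylindrically hyperbolic. By Theorem~B, the action of $H_n$ on a square subdivision of $X_n$ is acylindrical on a hyperbolic CAT(0) square complex, and a direct computation in the presentation (for example with a suitable product of two generators in general position) yields a loxodromic element with quasi-isometrically embedded orbits, so the action is non-elementary. Given any non-trivial $g \in H_n$, I would then invoke the construction of partial periodic quotients for acylindrically hyperbolic groups---extending the classical theorems of Olshanskii for hyperbolic groups, as developed by Coulon, Hull, and others in the acylindrical setting---to produce a normal subgroup $N \triangleleft H_n$ such that $g \notin N$ and every element of $H_n/N$ has finite order. The quotient $H_n/N$ then provides the required $F_2$-free quotient in which $g$ survives, and letting $g$ range over $H_n \setminus \{1\}$ establishes residual $F_2$-freeness.

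The main obstacle lies in the survival of a prescribed $g$ in the periodic quotient. For $g$ elliptic in the action on $X_n$ (so that $g$ fixes some vertex and hence lies in a subgroup of the solvable group $BS(1,2)$), survival follows from the kernel description in the Dahmani--Guirardel--Osin filling theorem: the kernel of the small-cancellation-style Dehn filling is a free product of conjugates of cyclic subgroups generated by sufficiently high powers of loxodromic elements, which meets any vertex stabiliser trivially. For $g$ loxodromic, one must instead choose the exponents of the periodic relations sufficiently large relative to the translation length of $g$ to ensure that $g$ itself is not collapsed; this is achieved by the standard induction on relator length in the Olshanskii--Coulon construction, combined with the fact that $g$ has positive translation length and hence bounded-below geometric complexity in the action. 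Once survival is established in both cases, the conclusion is immediate.
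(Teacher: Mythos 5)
Your overall strategy (Dehn-filling / small-cancellation quotients of the acylindrically hyperbolic group $H_n$, with survival of a prescribed element) is the same as the paper's, but there is a genuine gap at the central step: the quotients you can actually produce are \emph{not} periodic, so the observation ``periodic $\Rightarrow$ $F_2$-free'' does not apply. The partial periodic quotient theorems you invoke (Coulon, and the rotating-family machinery of Dahmani--Guirardel--Osin) only force high powers of \emph{loxodromic} elements to become trivial; their conclusion is that every element of the quotient is either of bounded finite order or the image of an \emph{elliptic} element. Here the elliptic part is essential: the vertex stabilisers of the action on $X_n$ are copies of $BS(1,2)$, which contain infinite-order elements, and nothing in the filling construction makes their images torsion. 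So the quotient $H_n/N$ you obtain is only ``periodic relative to the elliptic subgroups,'' and a priori a non-abelian free subgroup of $H_n/N$ could be generated by images of elliptic elements lying in \emph{different} vertex stabilisers.

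Closing this gap is exactly the content of the main claim in the paper's proof, and it cannot be skipped: one must show that every non-abelian free subgroup $F$ of $H_n$ contains an element acting hyperbolically on $X_n$ (so that $F$ cannot survive in a quotient where all loxodromics become torsion). The paper does this by combining (i) the fact that $BS(1,2)$ is solvable, hence $F$ is not contained in a single vertex stabiliser; (ii) Helly's theorem for CAT(0) spaces, which then produces two elliptic elements $a,b \in F$ with disjoint fixed-point sets; and (iii) an angle/local-geometry argument (using that fixed-point sets have diameter at most $2$ by weak acylindricity) showing that $ba$ has unbounded orbits and is therefore loxodromic. Your proposal contains no substitute for this step. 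The survival statements you sketch (elliptic elements avoid the kernel because the kernel consists of loxodromics; loxodromic elements survive if exponents are chosen large enough) are fine in spirit and match the paper's construction, but without the claim above your final quotient is not known to be $F_2$-free.
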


Let us now turn to groups acting on CAT(0) square complexes that are not necessarily hyperbolic. Paralleling what we just did for acylindricity, we will be interested in the following weakening of the WPD condition, which again has the advantage of dealing only with stabilisers of pairs of points:  

\begin{definition*}   

Let $G$ be a group acting on a geodesic metric space $X$ and let $g$ be an infinite order element with quasi-isometrically embedded orbits. We say that $g$ satisfies the \textit{weak WPD condition} (or that the action is \textit{weakly acylindrical in the direction of} $g$) if there exist constants $m, N \geq 0$ such that for every point $x$ of $X$, there are at most $N$ group elements fixing both $x$ and $g^{m}x$.
\end{definition*}

Our second main result is the following:

\begin{thmE}\label{thm:thmE}
 Let $G$ be a group acting by isometries on a CAT(0) square complex $X$. Let $g$ be a strongly contracting element for the CAT(0) metric and suppose that $g$ satisfies the weak WPD condition. Then $g$ satisfies the WPD condition. In particular, $G$ is either virtually cyclic or acylindrically hyperbolic.  
\end{thmE}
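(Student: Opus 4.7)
The plan is to reduce Theorem E to a local version of Theorem A, using the strongly contracting property of $g$ to carve out a ``hyperbolic-like'' region of $X$ along a quasi-axis of $g$, and then localise to this region the combinatorial techniques that underlie Theorem A.

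First, since $g$ has quasi-isometrically embedded orbits and is strongly contracting for the CAT(0) metric, the orbit $\ell_g = \langle g \rangle x_0$ of a suitable basepoint is a quasi-axis with the property that closest-point projections to $\ell_g$ coarsely contract: there exists $D(r)$ such that whenever $d(y, hy) \leq r$, the projections $\pi(y)$ and $\pi(hy)$ onto $\ell_g$ lie within $D(r)$ of one another. Now fix $r \geq 0$ and consider $h \in G$ with $d(x_0, hx_0), d(g^M x_0, h g^M x_0) \leq r$, where $M$ is to be chosen. The contraction estimate forces the projections of $h x_0$ and $h g^M x_0$ to lie within $D(r)$ of $x_0$ and $g^M x_0$ respectively, so the quasi-geodesic $h \ell_g$ fellow-travels $\ell_g$ along a segment of length at least $M \|g\| - 2D(r)$, where $\|g\|$ denotes the translation length of $g$.

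Next, we apply the combinatorial straightening used in the proof of Theorem A, now carried out locally inside a bounded tubular neighbourhood of $\ell_g$ rather than globally. For $M$ large enough, one shows that such an $h$ must coincide, on a subsegment of $\ell_g$ of length at least $m\|g\|$ (with $m$ the constant from the weak WPD condition), with an element of $G$ which exactly fixes two vertices $v$ and $v' = g^{m'} v$ of $\ell_g$ with $m' \geq m$. The weak WPD condition then bounds the number of such fixing elements by $N$, while the number of pairs $(v, v')$ that arise from the straightening is bounded by a constant depending only on $r$ and the local combinatorial structure of $X$ near the axis: the candidate vertices $v$ lie within bounded combinatorial distance of $x_0$ along $\ell_g$, and similarly for $v'$ near $g^M x_0$. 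Combining these two bounds establishes the WPD condition. The dichotomy in the conclusion then follows from the Bestvina--Bromberg--Fujiwara criterion recalled earlier.

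The main obstacle is the straightening step: converting coarse fellow-travelling of the quasi-axis into the existence of an element with exact fixed vertices at large axis distance. This is where the $2$-dimensional structure of the CAT(0) square complex is essential, via a careful analysis of hyperplanes crossing $\ell_g$ together with the rigidity of vertex links in dimension two. In contrast to Theorem A, the ambient space is no longer globally hyperbolic, so the same analysis must be performed in the ``effectively hyperbolic'' region surrounding $\ell_g$ that is provided by the strongly contracting hypothesis on $g$.
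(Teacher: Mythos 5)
Your overall architecture (reduce to points on a quasi-axis via the contraction property, then convert coarse fellow-travelling into exact fixing and invoke weak WPD) matches the paper's strategy in outline, and your reduction of arbitrary points to points near the axis is essentially the paper's final step. However, there is a genuine gap at what you yourself call ``the main obstacle'': the straightening step is the entire mathematical content of the theorem, and you assert it rather than prove it. Moreover, the statement you assert is not the right one. The paper does not show that each individual $h$ in the coarse stabiliser ``coincides with an element of $G$ which exactly fixes two vertices'' --- there is no reason for a coarse stabiliser element to be close to an exact stabiliser element, and this is precisely the difficulty the theorem addresses. What the paper actually proves is a pigeonhole statement about \emph{collections} of elements: given sufficiently many $h_i$ moving $x$ and $g^{M}x$ by at most $r$, one builds for each a reduced disc diagram (quadrangle) between a segment $\gamma$ of a combinatorial axis and $h_i\gamma$, shows via a Gau\ss--Bonnet count that such quadrangles have at most $4$ singularities, extracts Euclidean sub-quadrangles which embed in $\bbR^2$, sorts them into finitely many shapes (corridors versus staircases, then isometry types), and concludes that within one class at least $N$ of the $h_i$ act identically on a segment of length $L$ of a combinatorial axis of $g$; the ratios $h_ih_j^{-1}$ then fix that segment pointwise, contradicting weak WPD. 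The corridor case additionally requires the ``piling up'' of diagrams and the factorisation of grids through $I_m\times T_n$ to locate the fixed segment on an actual combinatorial axis of $g$, which is needed because weak WPD only constrains stabilisers of pairs of points of the form $x, g^m x$.

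A second, independent problem is your counting step: you bound the number of candidate pairs $(v,v')$ by ``a constant depending on $r$ and the local combinatorial structure of $X$ near the axis.'' The complexes of interest here are not locally compact (this is stressed in the introduction as the main motivation), so no bound can come from local finiteness of $X$. The finiteness constants in the paper ($N_{\mathrm{quad}}$, $N_{\mathrm{int}}$, the bound on widths via the no-grid constant $C_\Box$) come instead from the fact that Euclidean quadrangles and combinatorial intervals embed isometrically in $\bbZ^2$, so the number of \emph{shapes} of filling diagrams of bounded width over a segment of bounded length is finite even when vertex links are infinite. Without replacing your local-finiteness count by an argument of this kind, the proposal does not close.
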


As a weakly acylindrical action is weakly acylindrical in the direction of each of its strongly contracting elements, we obtain the following corollary: 

\begin{corF}
 Let $G$ be a group acting weakly acylindrically on a CAT(0) square complex $X$. If $g$ is a strongly contracting element of $G$ for the CAT(0) metric, then $g$ satisfies the WPD condition. In particular, $G$ is either virtually cyclic or acylindrically hyperbolic.  
\end{corF}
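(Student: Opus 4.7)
The plan is to obtain Corollary F as an essentially immediate consequence of Theorem E: I will verify that, under the weak acylindricity hypothesis, the strongly contracting element $g$ automatically satisfies the weak WPD condition, at which point Theorem E upgrades this to the full WPD condition and the ``virtually cyclic or acylindrically hyperbolic'' dichotomy follows from the Bestvina--Bromberg--Fujiwara criterion stated in the introduction. Let $L, N_0 \geq 0$ be the weak acylindricity constants, so that any two points of $X$ at distance at least $L$ are fixed by at most $N_0$ elements of $G$. To verify weak WPD for $g$ it suffices to exhibit a single integer $m$ satisfying the uniform estimate $d(x, g^m x) \geq L$ for every $x \in X$: the stabiliser $\mathrm{Stab}(x) \cap \mathrm{Stab}(g^m x)$ will then have cardinality at most $N_0$ for every $x$, and weak WPD holds with $N = N_0$.

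The existence of such an $m$ is the one nontrivial point, and this is where I expect the main obstacle: one needs a lower bound on $d(x, g^m x)$ that is \emph{uniform} in $x$, while the square complex $X$ is not assumed to be locally compact or cocompact. A naive triangle-inequality argument starting from the quasi-isometrically embedded orbit of $g$ through a single basepoint only yields a bound that degenerates as $x$ moves far away from that basepoint, and must be replaced by a genuinely geometric argument. The standard fact to invoke is that a strongly contracting element of a CAT(0) space acts loxodromically and semisimply, with an invariant geodesic axis on which it translates by its positive translation length $\tau := \tau(g) > 0$; applying the CAT(0) convexity inequality to the quadrilateral with vertices $x$, its nearest-point projection $p$ onto this axis, $g^m p$, and $g^m x$, yields the uniform bound $d(x, g^m x) \geq m \tau$. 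Any integer $m$ with $m\tau \geq L$ then does the job.

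Having obtained the weak WPD condition for $g$ in this way, Theorem E applies directly and promotes it to the full WPD condition for $g$. Since $g$ is by hypothesis strongly contracting with quasi-isometrically embedded orbits, the Bestvina--Bromberg--Fujiwara criterion recalled in the introduction then yields the stated dichotomy: $G$ is either virtually cyclic or acylindrically hyperbolic.
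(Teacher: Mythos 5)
Your proposal is correct and follows the paper's route exactly: the paper deduces Corollary F from Theorem E via the one-line observation that a weakly acylindrical action is weakly acylindrical in the direction of each of its strongly contracting elements, and you supply the justification of that observation, namely a lower bound $d(x,g^m x)\geq m\tau$ that is uniform in $x$. The cleanest way to get that bound (and what your projection/convexity remark amounts to) is that the displacement function of $g^m$ is convex with infimum $|g^m|_2=m|g|_2=m\tau>0$ (semisimplicity by Bridson's theorem, positivity of $\tau$ from the quasi-isometrically embedded orbit), so every point of $X$ is moved by $g^m$ at least $m\tau\geq L$ once $m$ is large enough.
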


Notice that the Rank Rigidity Theorem for CAT(0) cube complexes of Caprace--Sageev \cite[Theorem A]{CapraceSageevRankRigidity} provides us with a way to show the existence of strongly contracting isometries. In particular, we obtain the following corollary: 

\begin{corG}
 Let $G$  be a group acting weakly acylindrically on a CAT(0) square complex $X$ such that: 
 \begin{itemize}
  \item the action is essential,
  \item the action does not have a fixed point in $X \cup \partial_\infty X$,
  \item the complex $X$ is not the product of two unbounded trees.
 \end{itemize}
  Then $G$ contains strongly contracting elements for the CAT(0) metric, and every such element satisfies the WPD condition. In particular, $G$ is either virtually cyclic or acylindrically hyperbolic.
\end{corG}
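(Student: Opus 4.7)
The plan is to combine the Rank Rigidity Theorem of Caprace--Sageev, which is the only ingredient not already proved in this paper, with Corollary F. The essential content of Corollary G is that the hypotheses guarantee the existence of a strongly contracting element; the rest then follows immediately from Corollary F.

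First, I would apply the Rank Rigidity Theorem \cite[Theorem A]{CapraceSageevRankRigidity} to the action of $G$ on the finite-dimensional CAT(0) cube complex $X$. The hypotheses of that theorem are that the action is essential and has no fixed point in $\partial_\infty X$; these are guaranteed by assumptions (1) and (2). The conclusion is a dichotomy: either $X$ admits a non-trivial $G$-invariant decomposition as a product of two unbounded sub-complexes, or $G$ contains a rank one isometry. Since $X$ is a CAT(0) \emph{square} complex, any non-trivial product decomposition with both factors unbounded must have both factors of dimension one, hence both factors are unbounded trees. Assumption (3) excludes this case, so we obtain a rank one isometry $g \in G$.

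Next, I would invoke the standard fact that a rank one isometry of a CAT(0) cube complex is strongly contracting for the CAT(0) metric (this is part of the characterisations used throughout \cite{CapraceSageevRankRigidity} and related work), so $G$ indeed contains strongly contracting elements for the CAT(0) metric.

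Finally, let $g$ be any strongly contracting element of $G$. Since the action of $G$ on $X$ is weakly acylindrical, it is in particular weakly acylindrical in the direction of $g$, so $g$ satisfies the weak WPD condition. Theorem E then yields the WPD condition for $g$, and the theorem of Bestvina--Bromberg--Fujiwara recalled in the introduction concludes that $G$ is either virtually cyclic or acylindrically hyperbolic. The main subtlety, and the only place where the square-complex hypothesis enters, is in the reduction of the product case of rank rigidity to the excluded case of two unbounded trees; the rest of the argument is a direct concatenation of the Rank Rigidity Theorem with Corollary F.
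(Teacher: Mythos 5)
Your proposal is correct and follows exactly the route the paper intends: Rank Rigidity of Caprace--Sageev (with the product case reduced, via the two-dimensionality of a square complex, to the excluded product of two unbounded trees) produces a strongly contracting element, and Corollary F (equivalently, weak acylindricity plus Theorem E and the Bestvina--Bromberg--Fujiwara criterion) does the rest. No discrepancies with the paper's argument.
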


 For instance, this corollary can be used to recover the acylindrical hyperbolicity of the Higman group on $4$ generators directly from the action on its associated (non-hyperbolic) CAT(0) square complex \cite{MartinHigmanCubical}. Indeed, such an action is  weakly acylindrical \cite[Corollary 3.6]{MartinHigmanCubical}, and the Rank Rigidity Theorem of Caprace--Sageev can be applied to show the existence of strongly contracting group elements \cite[Remark 2.2]{MartinHigmanCubical}. 
 
 The Higman group is thus a second natural example, in addition to the mapping class group of a closed hyperbolic surface, of a group acting weakly acylindrically on a polyhedral complex naturally associated to it. As weak acylindricity is a much more tractable condition, and a possibly strictly weaker one, it is natural to ask the following:

 \begin{question}
  Which well-known group actions, which might not be (known to be) acylindrical, are weakly acylindrical?
 \end{question}

 Moreover, it is natural to ask whether such characterisations of acylindricity and of the WPD condition hold for more general classes of spaces. We thus conclude this introduction with the following: 
 
 \begin{question}\begin{itemize}
  \item For which classes of hyperbolic spaces is the weak acylindricity of an action equivalent to the acylindricity of that action? 
  \item For which classes of geodesic metric spaces does the weak WPD condition of a strongly contracting isometry implies the WPD condition for that isometry?
  \end{itemize}
 \end{question}

Let us explain briefly the organisation of the article. In a nutshell, the strategy we follow is the following: For a given $r>0$, to every pair of points $x, y$ in $X$ and every group element $g$ moving $x$ and $y$ by at most $r$, we associate a geodesic quadrangle between the points $x, y, gx, gy$, and we fill this loop by using an appropriate disc diagram. The aim is then to show that, if $x$ and $y$ are sufficiently far apart, then such `filling surfaces' must have large portions in common. In particular, the associated group elements will necessary act the same way on a very long common geodesic segment, which is where the weaker forms of acylindricity considered in this paper enter the picture. 

Thus, after recalling standard results about the geometry of CAT(0) square complexes and disc diagrams in Section \ref{sec:preliminaries}, we study in detail the combinatorial geometry of disc diagrams in Section \ref{sec:diagrams}. Section \ref{sec:grids} studies the way grids (that is, CAT(0) square complexes isometric to Euclidean rectangles tiled by unit squares) can be mapped to a given CAT(0) square complex. With these tools at hand, we prove the main theorems in Section \ref{sec:proofs}. Finally, Section \ref{sec:Higman} is devoted to the geometry of generalised Higman groups.
 
 \begin{thx}
 The author thanks his colleagues at the University of Vienna, and in particular Federico Berlai and Markus Steenbock, for discussions on Higman's group that motivated this work. 
 This work was partially supported by the European Research Council (ERC) grant no. 259527 of Goulnara Arzhantseva and by the Austrian Science Fund (FWF) grant M1810-N25.
 \end{thx}

\section{Preliminaries on CAT(0) square complexes}\label{sec:preliminaries}

This section contains standard results about CAT(0) square complexes which will be used in this article. Throughout this section, $X$ will be a CAT(0) square complex. 

\subsection{Disc diagrams and the combinatorial Gau\ss--Bonnet Theorem}

A \textit{disc diagram} $D$ over $X$ is a contractible planar square complex endowed with a combinatorial map  $D \ra X$ that is an embedding on each square. A disc diagram $D$ over $X$ is called \textit{reduced} if no two distinct squares of $D$ that share an edge are mapped to the same square of $X$. We start by an elementary observation.

\begin{lem}\label{lem:disc_CAT0}
 Let $\varphi:D \ra X$ be a reduced disc diagram. Then $D$ is a CAT(0) square complex. 
\end{lem}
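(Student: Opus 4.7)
My plan is to verify Gromov's link condition for $D$. Since $D$ is contractible by hypothesis, and in particular simply connected, it suffices to prove that the link of every vertex $v$ of $D$ is a simplicial graph of girth at least $4$, that is, contains neither an embedded bigon (two distinct edges sharing both endpoints) nor an embedded triangle. Recall that vertices of the link of $v$ in $D$ correspond to edges of $D$ through $v$, and edges of the link correspond to corners of squares of $D$ at $v$. Writing $\varphi : D \to X$ for the combinatorial map and $v' := \varphi(v)$, my strategy is to transfer any hypothetical short cycle in the link of $v$ in $D$ to a short cycle in the link of $v'$ in $X$, thereby contradicting either the CAT(0) property of $X$ or the reducedness of $D$.

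To rule out a bigon at $v$, I would suppose two distinct squares $S_1, S_2$ of $D$ have the same pair of edges $e_1, e_2$ as adjacent edges at $v$. Since $\varphi$ is injective on each square, $\varphi(e_1) \neq \varphi(e_2)$. If $\varphi(S_1) = \varphi(S_2)$, then $S_1$ and $S_2$ are two distinct squares meeting along an edge and mapping to the same square of $X$, contradicting the reducedness of $D$. Otherwise the images $\varphi(S_1), \varphi(S_2)$ are distinct squares of $X$, producing a bigon in the link of $v'$ in $X$, which contradicts the flag condition for links in a CAT(0) square complex.

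To rule out a triangle at $v$, I would suppose there are three distinct edges $e_1, e_2, e_3$ at $v$ together with three squares $S_{12}, S_{13}, S_{23}$ such that $S_{ij}$ has $e_i$ and $e_j$ as adjacent edges at $v$. Injectivity of $\varphi$ on each square forces $\varphi(e_1), \varphi(e_2), \varphi(e_3)$ to be pairwise distinct edges of $X$. Moreover each square of $X$ contributes exactly one edge to the link of any of its vertices, so the three squares $\varphi(S_{12}), \varphi(S_{13}), \varphi(S_{23})$ must themselves be pairwise distinct, since they contribute link edges between distinct pairs of link vertices. This produces a genuine triangle in the link of $v'$ in $X$, again contradicting the flag condition.

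The main obstacle is really just careful bookkeeping: one must untangle the several collapsing phenomena that $\varphi$ could a priori introduce. Edge collisions are ruled out by injectivity of $\varphi$ on individual squares; collisions of squares in the bigon case are ruled out by reducedness; remaining genuine short cycles in the link of $v'$ contradict the flag condition in $X$. Once these three ingredients are combined, Gromov's link condition yields that $D$ is a CAT(0) square complex.
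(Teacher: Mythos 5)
Your proof is correct and takes essentially the same route as the paper's: the paper notes that reducedness makes the induced maps on vertex links locally injective, so any short cycle in a link of $D$ would push forward to a short closed loop in a link of $X$, contradicting girth $\geq 4$; your bigon/triangle case analysis is exactly this argument written out explicitly, with reducedness handling the coincident-image-square case and injectivity on squares handling edge collisions. No gaps.
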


\begin{proof}
 Since the disc diagram is reduced, the restriction to the link of any vertex is a local isometry. In particular, it sends a simple closed loop of that look to a simple closed loop in the link of the image. As $X$ is a CAT(0) square complex by assumption, simple closed loops in the links of vertices have at least $4$ edges. Thus simple closed loops in the links of vertices of $D$ have at least $4$ edges, hence $D$ is a CAT(0) square complex. 
\end{proof}

 A disc diagram is \textit{non-degenerate} if its boundary is homeomorphic to a circle, and is \textit{degenerate} otherwise.
Recall that, by the Lyndon--van Kampen Theorem, one can associate to every non-backtracking loop $S \ra X$ a reduced disc diagram whose restriction to the boundary is the given loop. 

\begin{definition}[disc diagram between geodesics, quadrangle]
 Let $\gamma_-$, $\gamma_+$ be two geodesics of $X$, with vertices $u_-, v_-$ and $u_+, v_+$ respectively. A \textit{disc diagram between} $\gamma_-$ and $\gamma_+$ consists of the following data: 
 \begin{itemize}
  \item geodesic paths in $\gamma_u$, $\gamma_v$ in $X$ between $u_-, u_+$ and $v_-, v_+$ respectively,
  \item a reduced disc diagram $\varphi:D \ra X$ whose boundary decomposes as the concatenation of $4$ paths $P_+, P_v, P_-, P_u$ such that $\varphi$ sends $P_+, P_v, P_-, P_u$ bijectively to $\gamma_+, \gamma_v, \gamma_-, \gamma_u$.
 \end{itemize}
The boundary paths $P_+,  P_-$ are called the \textit{upper side} and \textit{lower side} of $D$ respectively. The boundary paths $ P_v,  P_u$ are called the \textit{gates} of $D$. 
A non-degenerate disc diagram between two geodesics is called a \textit{quadrangle}.

More generally, given a planar CAT(0) square complex $D$ homeomorphic to a disc and a decomposition of its boundary into geodesic segments  $P_+, P_v, P_-, P_u$ as above, we say that $D$ is a square complex  \textit{between $P_-$ and $P_+$}.
\end{definition}

\begin{figure}[H]
\begin{center}
 \scalebox{0.95}{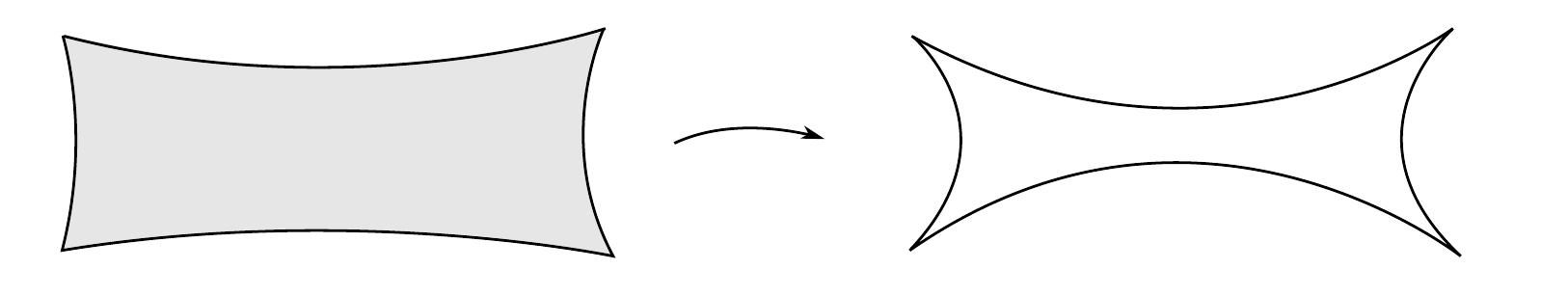}
\caption{A quadrangle.}
\label{fig_quadrangle}
\end{center}
\end{figure}

We now explain our main tool in controlling the geometry of reduced disc diagrams. Given a planar contractible square complex $D$, the \textit{curvature} of a vertex $v$ of $D$ is given by:
$$\kappa_D(v) = 2\pi - \pi \cdot \chi(\mbox{link}(v)) - n_v \frac{\pi}{2},$$
where $n_v$ denotes the number of squares of $D$ containing $v$.
A vertex of $D$ is  \textit{internal} if its link is connected and is called a \textit{boundary vertex} otherwise. The curvature of an internal vertex of $D$ is non-positive by Lemma \ref{lem:disc_CAT0}. The curvature of a boundary vertex is $\frac{\pi}{2}$ if it is contained in a single square of $D$, and non-positive otherwise. We call a boundary vertex of $D$ a \textit{corner} if it has non-zero curvature. The following version of the combinatorial Gau\ss--Bonnet Theorem follows the presentation of McCammond--Wise \cite[Theorem 4.6]{McCammondWiseFansLadders}.

\begin{thm}[Combinatorial Gau\ss-Bonnet Theorem] Let $D$ be planar contractible square complex. Then:
\begin{equation*}
~~~~~~~~~~~~~~~~~~~~~~~~~~~~~~~~~~~~\underset{v \mathrm{ ~vertex ~of~ }D}{\sum} \kappa_D(v) = 2\pi. ~~~~~~~~~~~~~~~~~~~~~~~~~~~~~~~~~~~~\qed
\end{equation*}
\label{GaussBonnet}
\end{thm}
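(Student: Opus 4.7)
The plan is to derive the identity from a direct Euler-characteristic computation, which is the standard combinatorial analogue of the classical Gauss--Bonnet theorem. Since $D$ is planar and contractible, $\chi(D) = 1$. Denote by $V$, $E$, $F$ the number of vertices, edges, and squares of $D$ respectively, so that $V - E + F = 1$.

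First, I would split the target sum additively using the definition of $\kappa_D$:
$$\sum_{v} \kappa_D(v) \;=\; 2\pi V \;-\; \pi \sum_{v} \chi(\mathrm{link}(v)) \;-\; \frac{\pi}{2}\sum_v n_v.$$
Next, I would evaluate each of the two remaining sums by a double-counting argument on incidences. The last one is immediate: each square of $D$ has exactly four corner vertices, contributing $1$ to $n_v$ at each, so $\sum_v n_v = 4F$.

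For the middle sum, recall that in a square complex the link of a vertex $v$ is a graph whose vertices correspond to the edges of $D$ incident to $v$, and whose edges correspond to corners of squares of $D$ at $v$ (exactly one corner, hence one edge in the link, per square containing $v$). Thus
$$\chi(\mathrm{link}(v)) \;=\; d(v) - n_v,$$
where $d(v)$ denotes the degree of $v$ in the $1$-skeleton of $D$. Summing over all vertices and using the handshake identity $\sum_v d(v) = 2E$, we obtain $\sum_v \chi(\mathrm{link}(v)) = 2E - 4F$.

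Substituting both identities into the first display gives
$$\sum_v \kappa_D(v) \;=\; 2\pi V - \pi(2E - 4F) - 2\pi F \;=\; 2\pi(V - E + F) \;=\; 2\pi \chi(D) \;=\; 2\pi,$$
as desired. No serious obstacle is expected: the whole argument is purely combinatorial bookkeeping, and the only point worth emphasising is the identification $\chi(\mathrm{link}(v)) = d(v) - n_v$, which uses that each $2$-cell is a square with a single corner at each of its vertices. This matches the presentation of McCammond--Wise \cite{McCammondWiseFansLadders} referenced just before the statement.
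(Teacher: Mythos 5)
Your proof is correct and is essentially the standard Euler-characteristic argument behind the cited result of McCammond--Wise (the paper itself gives no proof, only the reference). The bookkeeping identities $\sum_v n_v = 4F$ and $\chi(\mathrm{link}(v)) = d(v) - n_v$ are valid here because in a planar contractible square complex every square and every edge is embedded, so the computation $\sum_v \kappa_D(v) = 2\pi(V-E+F) = 2\pi\chi(D) = 2\pi$ goes through exactly as you wrote it.
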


\subsection{Hyperplanes in a CAT(0) square complex}

We briefly recall some notations and an elementary results about hyperplanes in a CAT(0) square complex.  A \textit{hyperplane} is a connected subspace of $X$ which intersects each square of $X$, isometrically identified  with $[-1,1] \times [-1,1]$, either in the empty set or in a segment of the form $\{0\} \times [-1,1]$ or $[-1,1] \times \{0\}$. An edge of $X$ intersecting a given hyperplane is said to be \textit{dual} to that hyperplane. Reciprocally, one can associate to every edge $e$ of $X$  a unique hyperplane meeting $e$, and such a hyperplane is said to be \textit{dual} to $e$. The \textit{combinatorial hyperplane} associated to a given hyperplane (also referred to in the literature as the \textit{carrier} of the hyperplane) is the minimal subcomplex of $X$ containing that hyperplane. Equivalently, it is the reunion of all the faces of $X$ containing an edge dual to that hyperplane. A hyperplane separates $X$ in exactly two connected components. A minimal subcomplex of $X$ containing one of these components is called a \textit{combinatorial half-space}.

In this article, we will only deal with combinatorial hyperplanes, and by a slight abuse of notations, we will denote by $H$ combinatorial hyperplanes and by $H_e$ the combinatorial hyperplane (associated to the hyperplane) dual to a given edge $e$ of $X$. We recall the following standard result (see for instance \cite[Lemma 13.4]{HaglundWiseSpecial}):

\begin{lem}\label{lem:hyperplanes_convex}
 Combinatorial hyperplanes and combinatorial half-spaces of a CAT(0) square complex are combinatorially convex.\qed
\end{lem}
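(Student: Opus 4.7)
The plan is to invoke the standard hyperplane characterization of combinatorial geodesics in a CAT(0) cube complex: an edge-path from a vertex $u$ to a vertex $v$ is a combinatorial geodesic if and only if it crosses each hyperplane at most once, and equivalently $d(u,v)$ equals the number of hyperplanes separating $u$ from $v$. A subcomplex $Y \subseteq X$ is then combinatorially convex precisely when every combinatorial geodesic between two vertices of $Y$ lies in $Y$.

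For a combinatorial half-space $\frh$ bounded by a hyperplane $H$: pick vertices $u,v \in \frh$ and any combinatorial geodesic $\gamma$ between them. Since $u$ and $v$ lie on the same side of $H$, every edge-path from $u$ to $v$ crosses $H$ an even number of times; but $\gamma$ crosses each hyperplane at most once, hence crosses $H$ not at all. Therefore every vertex of $\gamma$ lies on the same side of $H$ as $u$ and $v$, and $\gamma$ is contained in $\frh$.

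For a combinatorial hyperplane $N(H)$ (the carrier of $H$): pick $u,v \in N(H)$ and a combinatorial geodesic $\gamma$ between them. Up to replacing $u$ by its neighbour across $H$ along the dual edge at $u$ (a move which keeps us inside $N(H)$), I may assume that $u$ and $v$ lie on the same side of $H$; the half-space case then shows $\gamma \subseteq \frh^+$. Let $u',v'$ denote the mirror vertices of $u,v$ across $H$, joined to them by the dual edges $e_u, e_v$, and let $\gamma'$ be a combinatorial geodesic from $u'$ to $v'$. Since the hyperplanes separating $u'$ from $v'$ are exactly those separating $u$ from $v$ (the hyperplane $H$ separates neither pair), the path $\gamma'$ has the same length as $\gamma$. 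I would then fill the closed edge-loop $e_u \cdot \gamma' \cdot \overline{e_v} \cdot \overline{\gamma}$ by a reduced disc diagram $\varphi : D \to X$, which is itself a CAT(0) square complex by Lemma \ref{lem:disc_CAT0}, and track the hyperplane of $D$ dual to $e_u$. Since the only two boundary edges of $D$ whose $\varphi$-image is dual to $H$ are $e_u$ and $e_v$ (by the parity argument applied inside $D$), that hyperplane of $D$ is a simple arc connecting $e_u$ and $e_v$, and its carrier in $D$ is a strip of squares. The image of this strip under $\varphi$ produces, at every vertex of $\gamma$, an edge of $X$ dual to $H$, so every vertex of $\gamma$ lies in $N(H)$.

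The main obstacle is the carrier case, where one must justify the existence of the parallel geodesic $\gamma'$ and, crucially, that the hyperplane of $D$ dual to $e_u$ is a simple embedded arc meeting the boundary of $D$ exactly at $e_u$ and $e_v$. This relies both on the reducedness of $D$ (ensuring $D$ is CAT(0) and its dual curves embed as arcs or circles) and on the parity argument ruling out additional boundary crossings and forcing $e_v$ to be the other endpoint.
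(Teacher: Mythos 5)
The paper does not actually prove this lemma; it is quoted with a \verb|\qed| and a pointer to Haglund--Wise, so your argument has to stand on its own. The half-space part is fine: the parity argument combined with the fact that a combinatorial geodesic crosses each hyperplane at most once (Lemma~\ref{lem:distinct_edges_hyperplane}) does show that a geodesic between two vertices on the same side of $H$ never crosses $H$, hence stays on that side.

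The carrier case has a genuine gap at its final step. Your disc diagram $D$ filling $e_u\cdot\gamma'\cdot\overline{e_v}\cdot\overline{\gamma}$ does contain a ladder of squares --- the carrier in $D$ of the dual curve $\tilde H$ emanating from $e_u$ --- joining $e_u$ to $e_v$, and every edge of $D$ crossed by that dual curve is indeed mapped to an edge of $X$ dual to $H$. But the conclusion ``the image of this strip produces, at every vertex of $\gamma$, an edge dual to $H$'' presupposes that every vertex of the boundary path $P_\gamma$ actually lies on that strip, i.e.\ that the $\gamma$-side rail of the ladder is $P_\gamma$ itself. Nothing you have written rules out the ladder cutting through the interior of $D$ and leaving vertices of $P_\gamma$ off of it, and the assertion ``$P_\gamma$ is contained in the carrier of $\tilde H$ in $D$'' is precisely the convexity statement you are trying to prove, transplanted into $D$. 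The gap is fillable, but the filling is the real content of the lemma: for instance, each hyperplane $W_i$ of $D$ dual to an edge of $P_\gamma$ separates $\tilde u$ from $\tilde v$ but not $\tilde u$ from $\tilde u'$ nor $\tilde v$ from $\tilde v'$, so all four of its quadrants relative to $\tilde H$ are nonempty and $W_i$ crosses $\tilde H$; since a square complex admits no three pairwise-crossing hyperplanes (pairwise-crossing hyperplanes span a cube), no two of the $W_i$ cross each other, so $D$ has exactly $|\gamma|$ squares and is forced to be the $1\times|\gamma|$ ladder, whose upper rail is $P_\gamma$. Alternatively one runs the standard induction along $\gamma$: the hyperplane dual to the first edge of $\gamma$ must cross $H$, and two crossing hyperplanes dual to edges at a common vertex span a square at that vertex, which transports $e_u$ one step along $\gamma$. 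Either way an extra, nontrivial ingredient is needed that your write-up omits. A smaller point: ``replacing $u$ by its neighbour across $H$'' does not reduce the mixed-sides case to the same-side case for a \emph{given} geodesic $\gamma$, since $\gamma$ need not pass through that neighbour; the correct reduction is to split $\gamma$ at the unique edge where it crosses $H$ and treat the two halves separately.
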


We also have the following:

\begin{lem}\label{lem:halfspaces_CAT0convex}
Combinatorial half-spaces are convex for the CAT(0) metric.
\end{lem}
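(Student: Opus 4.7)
The plan is to identify the combinatorial half-space $C$ as the closed metric $\tfrac{1}{2}$-neighborhood of a CAT(0) convex subspace and to conclude using the convexity of distance functions in CAT(0) spaces. Fix the hyperplane $H$ bounding $C$, let $U$ be the component of $X \setminus H$ whose minimal subcomplex is $C$, and set $L = \overline{U}$ and $N$ equal to the carrier of $H$, so that $C = L \cup N$ by construction.

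First, I would recall that the hyperplane $H$ is itself CAT(0) convex in $X$: it inherits the structure of a one-dimensional CAT(0) cube complex (a tree), and its inclusion into $X$ is a local isometry between CAT(0) spaces, hence an isometric embedding with convex image by the usual lifting argument. A standard crossing argument then shows that the closed metric half-space $L$ is CAT(0) convex: if the CAT(0) geodesic $\gamma$ between two points of $L$ exited $L$, it would enter the complementary open component $V$ and re-enter $L$; on any maximal excursion subsegment $[t_1,t_2]$, both $\gamma(t_1)$ and $\gamma(t_2)$ would lie on $\partial L = H$, and by the convexity of $H$ the CAT(0) geodesic between these endpoints would be forced to lie in $H \subseteq L$, contradicting the uniqueness of CAT(0) geodesics.

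Next, I would identify $C$ with the closed metric $\tfrac{1}{2}$-neighborhood of $L$, i.e.\ with $\{p \in X : d(p, L) \leq \tfrac{1}{2}\}$. The inclusion $C \subseteq \{d(\cdot, L) \leq \tfrac{1}{2}\}$ is straightforward, since every point of a carrier square lies at CAT(0) distance at most $\tfrac{1}{2}$ from the bisecting hyperplane $H \subseteq L$. The reverse inclusion is more delicate: if $p \notin L$ satisfies $d(p,L) \leq \tfrac{1}{2}$, then its projection onto the convex set $L$ lies on $\partial L = H$, so $d(p,H) \leq \tfrac{1}{2}$; a local analysis of the cells abutting $H$, combined essentially with the CAT(0) link condition of $X$, then forces $p$ to lie in the carrier $N$, since a point outside $N$ would have to cross a boundary edge of $N$ at distance exactly $\tfrac{1}{2}$ from $H$ and continue further, yielding $d(p,H) > \tfrac{1}{2}$.

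The lemma then follows: since $L$ is CAT(0) convex, the distance function $d(\cdot, L)$ is convex on the CAT(0) space $X$, and so its sublevel set $\{d(\cdot, L) \leq \tfrac{1}{2}\} = C$ is convex for the CAT(0) metric. The main delicate point is the identification of $C$ with the $\tfrac{1}{2}$-neighborhood of $L$, where the CAT(0) link condition of $X$ enters essentially to rule out points lying within distance $\tfrac{1}{2}$ of $H$ but outside the carrier.
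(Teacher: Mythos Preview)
Your approach is correct in outline and can be made rigorous, but it is genuinely different from, and considerably longer than, the paper's argument. The paper's proof is a single sentence: combinatorial half-spaces are \emph{locally} convex for the CAT(0) metric (this is immediate from the local product structure of the carrier near the bounding hyperplane), and one then invokes the general fact that a connected, locally convex subset of a CAT(0) space is globally convex (the paper cites Bux--Witzel for this). No identification with a metric neighbourhood, and no separate treatment of $H$, $L$, or $N$, is needed.

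Your route---convexity of $H$, then of $L=\overline{U}$ via a crossing argument, then $C=\{d(\cdot,L)\le \tfrac12\}$, then convexity of sublevel sets---has the merit of being self-contained and of not relying on that external local-to-global principle. The cost is that the ``delicate'' reverse inclusion in your Step~3 essentially requires you to know that the carrier $N$ is itself CAT(0) convex: to conclude $d(p,H)>\tfrac12$ for $p\in V\setminus N$, you need that the geodesic from the entry point $q$ on the $V$-side rail to $H$ has length exactly $\tfrac12$ \emph{in $X$}, which amounts to ruling out shortcuts outside $N$, i.e.\ to the convexity of $N$. That is of course true (same local-isometry argument as for $H$), but it means your proof is doing work roughly comparable to the statement being proved. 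If you want to keep your approach, I would recommend stating and using the convexity of $N$ explicitly rather than leaving it as a ``local analysis using the link condition''.
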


\begin{proof}
They are locally convex for the CAT(0) metric, and locally convex connected subsets of  a CAT(0) space are globally convex (see for instance  \cite[Theorem 1.10]{BuxWitzelLocallyConvex} for a proof of this fact). 
\end{proof}

The following useful result is well-known, see for instance \cite[Lemma 13.1]{HaglundWiseSpecial}:

\begin{lem}\label{lem:distinct_edges_hyperplane}
 A combinatorial geodesic of a CAT(0) square complex does not contain two distinct edges that define the same hyperplane.\qed
\end{lem}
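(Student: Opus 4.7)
The plan is to argue by contradiction in the standard way, using combinatorial convexity of carriers. Suppose $\gamma$ is a combinatorial geodesic containing two distinct edges $e_1, e_2$ both dual to some hyperplane $H$. Among all such pairs, I choose one minimising the number of edges of $\gamma$ strictly between $e_1$ and $e_2$, so that the subpath $\beta$ of $\gamma$ strictly between them contains no edge dual to $H$. Write $\gamma = \alpha \cdot e_1 \cdot \beta \cdot e_2 \cdot \delta$, and for $i=1,2$ let $v_i$ denote the endpoint of $e_i$ adjacent to $\beta$, and $v_i'$ denote the other endpoint of $e_i$.

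The goal is to produce a strictly shorter edge path between the endpoints of $\gamma$, contradicting the geodesic hypothesis. Since $\beta$ is disjoint from $H$, its endpoints $v_1$ and $v_2$ lie in a common combinatorial half-space of $H$; in fact both sit on the boundary component $\partial_+ N(H)$ of the carrier $N(H)$ of $H$ on the $\beta$-side, while $v_1', v_2' \in \partial_- N(H)$. I will use the standard fact that $N(H)$ decomposes as a product, yielding a combinatorial isomorphism $\pi : \partial_+ N(H) \to \partial_- N(H)$ along edges dual to $H$, with $\pi(v_i) = v_i'$.

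Combining this identification with Lemma \ref{lem:hyperplanes_convex} (applied to the two boundary components of $N(H)$, each of which is combinatorially convex in $X$) I obtain the equality of ambient distances $d(v_1', v_2') = d(v_1, v_2)$, and clearly $d(v_1, v_2) \leq |\beta|$ since $\beta$ is one such edge path. Let $\beta'$ be a combinatorial geodesic in $\partial_- N(H)$ from $v_1'$ to $v_2'$ of length $d(v_1', v_2') \leq |\beta|$. Replacing the subsegment $e_1 \cdot \beta \cdot e_2$ (of length $|\beta|+2$) by $\beta'$ yields an edge path $\gamma' = \alpha \cdot \beta' \cdot \delta$ between the endpoints of $\gamma$ of length at most $|\gamma| - 2$, contradicting the assumption that $\gamma$ is a combinatorial geodesic.

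The only mildly delicate step is justifying the isomorphism $\pi$ and the combinatorial convexity of $\partial_\pm N(H)$ in $X$, which together guarantee that a combinatorial geodesic in $\partial_- N(H)$ realises the ambient combinatorial distance. Both are routine consequences of the product structure of the carrier of a hyperplane together with Lemma \ref{lem:hyperplanes_convex}; no further combinatorial input (e.g.\ disc diagrams or Gauss--Bonnet) is needed.
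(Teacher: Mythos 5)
Your proof is correct, and it is the standard shortening argument for this fact. Note that the paper does not prove this lemma at all --- it is quoted from Haglund--Wise (their Lemma 13.1) with a \qed --- so there is no in-paper argument to compare against; your write-up supplies the usual proof. Two small remarks. First, the innermost-pair reduction and the final splice $\gamma'=\alpha\cdot\beta'\cdot\delta$ are handled correctly, including the degenerate case where $\beta$ is empty (there $v_1=v_2$ forces $v_1'=v_2'$ and one simply deletes $e_1\cdot e_2$). Second, be aware that the two black boxes you invoke --- the product decomposition $N(H)\cong \bar H\times I$ of the carrier and the combinatorial convexity of its boundary components (the paper's Lemma \ref{lem:hyperplanes_convex} is stated for the full carrier and for half-spaces, but the same sources give convexity of the two boundary copies) --- are themselves established in the literature by disc-diagram or minimal-area arguments, so your closing claim that ``no disc diagrams are needed'' is true only modulo those imported facts. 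Since the paper itself treats the lemma as a citation, this level of reliance on standard structure theory is entirely appropriate. If you wanted to lean only on the paper's stated Lemma \ref{lem:hyperplanes_convex}, you could replace convexity of $\partial_\pm N(H)$ by convexity of $N(H)$ itself: $d_X(v_1',v_2')\le d_{N(H)}(v_1',v_2')=d_{N(H)}(v_1,v_2)=d_X(v_1,v_2)\le|\beta|$, the middle equality coming from the product structure and the last equality from convexity of the carrier.
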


If $\varphi: D \ra X$ is a reduced disc diagram, then $D$ is a CAT(0) square complex by Lemma \ref{lem:disc_CAT0}. The \textit{rails} of a combinatorial hyperplane of $D$ are the two maximal subcomplexes of that combinatorial hyperplane which we do not contain an edge dual to the associated hyperplane.

\subsection{Combinatorial intervals}

Recall that the \textit{combinatorial interval} between two vertices $v$, $v'$ of the CAT(0) square complex $X$, which we denote $\mbox{Int}_X(v,v')$, is the minimal subcomplex of $X$ containing all the combinatorial geodesics between $v$ and $v'$. Equivalently, it is the full subcomplex of $X$ associated to the reunion of all the combinatorial geodesics between $v$ and $v'$. The following is a direct consequence of Lemma \ref{lem:distinct_edges_hyperplane}:

\begin{lem}\label{lem:intervals_halfspaces}
 The combinatorial interval between two vertices of $X$ is the intersection of all the combinatorial half-spaces containing these two vertices.\qed
\end{lem}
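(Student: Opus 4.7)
The plan is to prove the two inclusions separately. The inclusion $\mathrm{Int}_X(v, v') \subseteq \bigcap H^*$ (over combinatorial half-spaces $H^*$ containing both $v$ and $v'$) is immediate from Lemma \ref{lem:hyperplanes_convex}: each such $H^*$ is combinatorially convex, hence contains every combinatorial geodesic between $v$ and $v'$ and, being a subcomplex, also contains the interval $\mathrm{Int}_X(v, v')$ that these geodesics generate.

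For the reverse inclusion I would argue by contrapositive. Let $w$ be a vertex not lying on any combinatorial geodesic from $v$ to $v'$. Choose combinatorial geodesics $\alpha$ from $v$ to $w$ and $\beta$ from $w$ to $v'$. Their concatenation has length $d(v, w) + d(w, v') > d(v, v')$, so $\alpha \cdot \beta$ is not itself a combinatorial geodesic, and must therefore contain two distinct edges dual to a common hyperplane $H$. By Lemma \ref{lem:distinct_edges_hyperplane}, these two edges cannot both sit inside $\alpha$ or both inside $\beta$, so exactly one of them lies in $\alpha$ and one in $\beta$. Tracking how each of the two geodesics crosses $H$, this forces $v$ and $v'$ to lie on a common side of $H$ while $w$ lies strictly on the opposite side. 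The combinatorial half-space $H^*$ of $H$ lying on the side of $v$ and $v'$ is then a subcomplex containing both $v$ and $v'$ but missing $w$, contradicting $w \in \bigcap H^*$.

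The main obstacle in executing this plan is the bookkeeping in the very last step: one needs to verify cleanly that a vertex strictly on one side of $H$ lies in the combinatorial half-space on that side and does not appear in the half-space on the opposite side. This amounts to unpacking the definition of combinatorial half-space, together with the fact that no vertex of $X$ lies on the hyperplane $H$, which makes ``the side'' of any vertex unambiguous. Everything else is a short combinatorial manipulation driven by Lemma \ref{lem:distinct_edges_hyperplane}.
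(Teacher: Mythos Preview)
Your argument is correct and is exactly the fleshing-out the paper has in mind: the paper offers no proof beyond the remark that the lemma is ``a direct consequence of Lemma~\ref{lem:distinct_edges_hyperplane}'', and your contrapositive argument for the nontrivial inclusion is the standard way to unpack that remark.

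Two small points worth making explicit. First, the step ``$\alpha\cdot\beta$ is not a geodesic, hence it contains two distinct edges dual to a common hyperplane'' uses the \emph{converse} of Lemma~\ref{lem:distinct_edges_hyperplane} (equivalently, that the combinatorial distance equals the number of separating hyperplanes); this is standard for CAT(0) cube complexes but is not literally stated in the paper, so a one-line justification would be appropriate. Second, your argument compares the two subcomplexes only at the level of vertices; to conclude equality of subcomplexes you should observe that both sides are full subcomplexes (the interval by the paper's equivalent description, and the intersection of half-spaces because each combinatorial half-space is full and fullness is preserved under intersection). With these two sentences added, the proof is complete.
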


Thus, Lemma \ref{lem:halfspaces_CAT0convex} implies the following: 

\begin{cor}\label{cor:intervals_CAT0convex}
Combinatorial intervals are convex for the CAT(0) metric. \qed
\end{cor}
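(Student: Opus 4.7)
The plan is to deduce this corollary directly from the two results immediately preceding it, with essentially no new work beyond a standard observation about intersections of convex subsets of a CAT(0) space.

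First, I would invoke Lemma \ref{lem:intervals_halfspaces} to rewrite $\mathrm{Int}_X(v,v')$ as the intersection $\bigcap_{\alpha} H_\alpha$ of all combinatorial half-spaces $H_\alpha$ containing the two vertices $v$ and $v'$. This gives a description of the combinatorial interval as an intersection of subcomplexes whose CAT(0) geometry we already understand.

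Next, by Lemma \ref{lem:halfspaces_CAT0convex}, each such combinatorial half-space $H_\alpha$ is CAT(0)-convex in $X$. I would then use the elementary fact that an arbitrary intersection of CAT(0)-convex subsets of a CAT(0) space is again CAT(0)-convex: if $x, y$ lie in every $H_\alpha$, then the unique CAT(0) geodesic from $x$ to $y$ lies in each $H_\alpha$ by convexity, hence in their intersection. Applying this to the family $\{H_\alpha\}$ shows that $\mathrm{Int}_X(v,v')$ is convex for the CAT(0) metric, which is precisely the statement.

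There is no real obstacle here; the corollary is a formal consequence of the two previous lemmas. The only thing to mention, for cleanliness, is that the intersection is non-empty (it contains $v$ and $v'$), so the convexity statement is not vacuous.
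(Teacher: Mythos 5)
Your proposal is correct and follows exactly the route the paper intends: the corollary is stated as an immediate consequence of Lemma \ref{lem:intervals_halfspaces} (intervals are intersections of combinatorial half-spaces) and Lemma \ref{lem:halfspaces_CAT0convex} (half-spaces are CAT(0)-convex), together with the standard fact that intersections of convex subsets are convex. Nothing is missing.
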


 We also recall the following result (see \cite[Theorem 1.16]{PropertyACAT(0)CubeComplexes}):

\begin{lem}\label{lem:intervals_embed}
 A combinatorial interval of a  CAT(0) square complex isometrically embeds in $\bbR^2$ with its standard square tiling.\qed
\end{lem}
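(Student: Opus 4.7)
The plan is to construct an explicit isometric combinatorial embedding $\phi \colon I \to \bbR^2$ using the hyperplane structure of $I := \mathrm{Int}_X(v, v')$. By Corollary \ref{cor:intervals_CAT0convex}, $I$ is convex for the CAT(0) metric, hence a CAT(0) square complex in its own right; being a subcomplex of the $2$-dimensional complex $X$, it is at most $2$-dimensional. The hyperplanes of $I$ are precisely the hyperplanes of $X$ separating $v$ from $v'$, by Lemma \ref{lem:intervals_halfspaces}. By Lemma \ref{lem:distinct_edges_hyperplane}, the combinatorial distance in $I$ between any two vertices equals the number of such hyperplanes separating them.

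The key step is to produce a partition of the hyperplanes of $I$ into two classes $\cA \sqcup \cB$ such that no two hyperplanes in the same class cross in $I$. Observe first that, since $X$ is $2$-dimensional, the crossing graph of the hyperplanes of $I$ is triangle-free (three pairwise crossing hyperplanes would span a $3$-cube). To upgrade triangle-freeness to bipartiteness, I would show that the crossing graph is in fact complete bipartite on these two classes: fixing a combinatorial geodesic $\gamma$ from $v$ to $v'$ and using square swaps (exchanging two consecutive edges of $\gamma$ whose dual hyperplanes bound a square of $I$), one transforms $\gamma$ into other combinatorial geodesics and thereby shows that if two hyperplanes both cross a common third hyperplane in $I$, then they must themselves cross. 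This step, which amounts to a careful manipulation of combinatorial geodesics in $I$, is the main obstacle of the proof.

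Once the bipartition is in hand, define $\phi$ on vertices by $\phi(u) := (a(u), b(u))$, where $a(u)$ and $b(u)$ count the hyperplanes in $\cA$ and $\cB$ respectively that separate $v$ from $u$. Extend $\phi$ to edges by mapping each edge dual to a hyperplane in $\cA$ (resp. $\cB$) to the appropriate unit horizontal (resp. vertical) segment; each square of $I$, being bounded by a pair of $\cA$-dual and a pair of $\cB$-dual opposite edges by the partition property, is sent bijectively to a unit square of $\bbR^2$. Injectivity of $\phi$ on vertices follows since two distinct vertices of $I$ are separated by at least one hyperplane, which contributes a difference in either the $\cA$-count or the $\cB$-count. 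For the isometric embedding claim, $d_I(u, u')$ equals the number of hyperplanes separating $u$ from $u'$, which splits as $|a(u) - a(u')| + |b(u) - b(u')| = \|\phi(u) - \phi(u')\|_1$, the standard combinatorial distance in $\bbZ^2$ with its square tiling.
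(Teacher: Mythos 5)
The paper does not actually prove this lemma: it quotes it from \cite{PropertyACAT(0)CubeComplexes}, so there is no internal argument to compare against. Your overall architecture --- partition the hyperplanes separating $v$ from $v'$ into two pairwise non-crossing families and send each vertex to the pair of counts of separating hyperplanes in each family --- is the standard proof, and your final counting step is essentially correct. (To make it airtight you should note that, within one family, the set of hyperplanes separating $v$ from a vertex $u$ of the interval is an initial segment of the total order ``$H<H'$ iff $H$ separates $v$ from $H'$''; two initial segments of a chain are nested, so the symmetric difference of the two sets has cardinality equal to the difference of their cardinalities, which is what turns the hyperplane count into $\|\phi(u)-\phi(u')\|_1$.)

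The step you yourself identify as the main obstacle is, however, handled incorrectly. First, the crossing graph of the separating hyperplanes is in general \emph{not} complete bipartite: in the $L$-shaped CAT(0) square complex formed by the squares $[0,1]^2$, $[1,2]\times[0,1]$ and $[1,2]\times[1,2]$, the interval between $(0,0)$ and $(2,2)$ is the whole complex, and the hyperplane $x=1/2$ does not cross $y=3/2$; the crossing graph is a path on four vertices, not $K_{2,2}$. Second, the auxiliary claim you propose to establish by square swaps --- `if two hyperplanes both cross a common third hyperplane in $I$, then they must themselves cross' --- contradicts the triangle-freeness you have just observed: applied to any hyperplane of degree $2$ in the crossing graph it would produce a triangle $\{H_1,H_2,H_3\}$ of pairwise crossing hyperplanes, hence a $3$-cube. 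So this route cannot work as stated. What you actually need is only bipartiteness, and the clean way to get it is to note that `$H$ separates $v$ from $H'$' is a partial order on the separating hyperplanes whose incomparability graph is exactly the crossing graph (two disjoint hyperplanes both separating $v$ from $v'$ are nested in series, and transitivity follows from inclusion of half-spaces). Triangle-freeness of the crossing graph says this poset has no antichain of size $3$, and Dilworth's theorem then partitions it into two chains, i.e.\ two pairwise non-crossing families. With that replacement, the rest of your construction goes through.
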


\section{Disc diagrams and isometric embeddings in the Euclidean plane}\label{sec:diagrams}

As disc diagrams will be our main tool in proving the results presented in the introduction, this section is devoted to study the combinatorial geometry of a reduced disc diagram $\varphi: D \ra X$. The goal of this section is threefold: to obtain a useful criterion ensuring that the CAT(0) square complex $D$ isometrically embeds in $\bbR^2$ with its standard square tiling (Proposition \ref{prop:embedding_Euclidean}), to obtain a criterion ensuring that the disc diagram $\varphi:D \ra X$ is an isometric embedding (Proposition \ref{lem:isometric_embedding}), and to prove that for a sufficiently well behaved disc diagram, the map restricts to an isometric embedding on a large subcomplex (Proposition \ref{cor:embedding_euclidean}). Throughout this section again, $X$ will be a given CAT(0) square complex.

\subsection{Singularities and (almost) Euclidean quadrangles}

\begin{definition}[singularities, almost Euclidean and Euclidean quadrangles]
 A \textit{singularity} of a quadrangle $\varphi: D \ra X$ is one of the following:
 \begin{itemize}
  \item an internal vertex of negative curvature,
  \item a corner of curvature at most $-\pi$,
  \item a pair of consecutive corners of curvature $-\frac{\pi}{2}$ in the interior of one of the sides $P_+$ and $ P_-$.
 \end{itemize}
 A  quadrangle is called \textit{almost Euclidean} if it contains no singularity. It is called \textit{Euclidean} if in addition the boundary of $D$ does not contain two consecutive corners of curvature $-\frac{\pi}{2}$.

More generally, a planar square complex homeomorphic to a disc is \textit{Euclidean} if it contains no internal vertex of negative curvature, no boundary vertex of curvature at most $-\pi$, and no pair of consecutive corners of curvature $-\frac{\pi}{2}$.
\end{definition}

The following lemma, which  shows that a given quadrangle cannot be `too far' from being almost Euclidean, will be used in Section \ref{sec:proofs}.

\begin{lem}\label{lem:sigularities}
 Let $\varphi: D \ra X$ be a quadrangle. Then $D$ contains at most $4$ singularities.
\end{lem}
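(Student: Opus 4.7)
The plan is to apply the Combinatorial Gauss--Bonnet Theorem (Theorem~\ref{GaussBonnet}) to $D$, which gives $\sum_v \kappa_D(v) = 2\pi$. Since the only source of positive curvature is convex corners (boundary vertices lying in a single square, each of curvature exactly $+\pi/2$), writing $A$ for their total number and $N \geq 0$ for the total magnitude of negative curvature contributions, Gauss--Bonnet rearranges to
\[
A = 4 + \tfrac{2N}{\pi}.
\]

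Next, I would lower-bound $N$ in terms of the singularity count $S = N_1 + N_2 + N_3$. Each type~1 singularity is an internal vertex with $\kappa \leq -\pi/2$, contributing magnitude at least $\pi/2$. Each type~2 singularity is a corner with $\kappa \leq -\pi$, contributing magnitude at least $\pi$. For type~3 singularities, a run of $\ell \geq 2$ consecutive $-\pi/2$ corners on $P_+$ or $P_-$ accounts for $\ell - 1$ type~3 pairs while contributing total magnitude $\ell \pi/2$; in the limiting regime of long runs this averages to $\pi/2$ per type~3 singularity, but in every case the total contribution is at least $\tfrac{\pi}{2}(N_3 + \rho)$ where $\rho \geq 0$ counts the number of runs. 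Summing these disjoint contributions yields
\[
N \;\geq\; \tfrac{\pi}{2}\bigl(N_1 + 2 N_2 + N_3\bigr) \;=\; \tfrac{\pi}{2}\bigl(S + N_2\bigr),
\]
and hence $S \leq A - 4 - N_2 \leq A - 4$.

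The remaining and main step is therefore to bound $A$ from above, and the natural target is $A \leq 8$. The four junction vertices $u_\pm, v_\pm$ supply at most four convex corners; every additional convex corner lies in the interior of one of the four sides, and corresponds to the associated combinatorial geodesic $\gamma_\pm, \gamma_u, \gamma_v$ making a $\pi/2$ turn at a square corner of $X$. The plan is to show that every such interior convex corner is forced to be accompanied by a \emph{non-singular} negative-curvature contribution nearby---namely an isolated $\kappa = -\pi/2$ corner in the interior of $P_\pm$, or a $\kappa = -\pi/2$ corner in the interior of one of the gates $P_u, P_v$. These positive/negative pairs cancel in the Gauss--Bonnet sum, leaving only the four junction vertices to supply the total curvature $2\pi$, which is then consumed at a rate of $\pi/2$ per singularity.

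The main obstacle is making the pairing between interior convex corners and non-singular concave corners precise. This requires a careful structural analysis of how the hyperplanes of $D$ travel between the four sides (using that each side is a combinatorial geodesic, so by Lemma~\ref{lem:distinct_edges_hyperplane} no hyperplane has both endpoints on the same side), together with a local examination of what must occur near a convex corner on the interior of a side to maintain the geodesic property of the corresponding side.
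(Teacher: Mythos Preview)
Your Gauss--Bonnet setup and the inequality $S \leq A - 4$ are correct, and this is exactly the framework the paper uses. The divergence is in how you propose to control the positive curvature.

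The literal target $A \leq 8$ is false: a geodesic side can carry arbitrarily many convex corners, provided they alternate with concave ones. You see this and fall back to a pairing scheme, which is the right instinct, but you then defer the pairing to an unspecified ``careful structural analysis of how the hyperplanes of $D$ travel between the four sides'', and flag this as the main obstacle. That machinery is both unfinished and unnecessary.

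The paper short-circuits all of this with a single local observation: because each side $P_i$ is a combinatorial geodesic, its interior cannot contain two \emph{consecutive} corners of curvature $+\tfrac{\pi}{2}$ (two such corners would bound a square giving a strictly shorter path). This alone yields
\[
\sum_{v \in \mathring{P}_i} \kappa_D(v) \;\leq\; \tfrac{\pi}{2},
\]
and if the interior of $P_i$ carries $n_i$ singularities one refines this to $\leq \tfrac{\pi}{2} - n_i\tfrac{\pi}{2}$. Adding the four junction vertices (each $\leq \tfrac{\pi}{2}$) and the interior of $D$ (each internal singularity $\leq -\tfrac{\pi}{2}$), Gauss--Bonnet gives
\[
2\pi \;\leq\; 4\cdot\tfrac{\pi}{2} + 4\cdot\tfrac{\pi}{2} - S\cdot\tfrac{\pi}{2},
\]
hence $S \leq 4$.

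In other words, the pairing you are looking for is furnished side-by-side, not globally: on each geodesic side, every interior convex corner beyond the first is preceded by a concave one on that same side. This is an elementary fact about combinatorial geodesics in CAT(0) square complexes and requires no hyperplane bookkeeping. Once you insert it, your argument collapses to the paper's.
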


\begin{proof}
 Let us denote by $P_i, i=1, \ldots, 4,$ the four geodesic sides of the boundary of $D$. Since each $P_i$ is a combinatorial geodesic, its interior does not contain two consecutive corners of positive curvature (see \cite[Lemma 3.8]{MartinHigmanCubical} and its proof). In particular, it follows that
 $$ \sum_{v \in \mathring{P}_i} \kappa_D(v) \leq \frac{\pi}{2},$$
 and if $P_i$ contains $n_i$ singularities in its interior, a similar argument yields 
  $$ \sum_{v \in \mathring{P}_i} \kappa_D(v) \leq \frac{\pi}{2} - n_i\frac{\pi}{2}.$$
 Since an internal vertex of $D$ has negative curvature if and only if it is a singularity, we have 
  $$ \sum_{v \in \mathring{D}} \kappa_D(v) \leq - n\frac{\pi}{2},$$
  where $n$ is the number of internal singularities of $D$. Finally, each of the remaining four vertices corresponding to the intersection of two adjacent sides $P_i, P_j$ brings a curvature of at most $\frac{\pi}{2}$. The combinatorial Gau\ss--Bonnet Theorem \ref{GaussBonnet} thus yields the following inequality: 
  $$2\pi \leq 4\cdot\frac{\pi}{2} + 4\cdot\frac{\pi}{2} - (\sum_{1\leq i \leq 4} n_i)\frac{\pi}{2} - n \frac{\pi}{2}$$
  and  the number $n + \sum_in_i$ of singularities of $D$ is thus bounded above by $4$.
\end{proof}

\subsection{Embeddability in the Euclidean plane}

\begin{prop}\label{prop:embedding_Euclidean}
 Let $D$ be a Euclidean square complex. Then $D$ embeds isometrically in $\bbR^2$ with its standard square tiling.
\end{prop}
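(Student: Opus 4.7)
The plan is to construct a combinatorial developing map $\phi:D\to\bbR^2$ that sends each square of $D$ isometrically onto a unit square of the standard tiling, and then deduce from the CAT(0) geometry of $D$ that $\phi$ is an isometric embedding.

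First I would record the local combinatorial consequences of the Euclidean hypotheses. The link condition for CAT(0) square complexes forces every interior vertex to satisfy $n_v\ge 4$, while the absence of negative curvature at interior vertices forces $n_v\le 4$, so each interior vertex has exactly $n_v=4$; boundary vertices satisfy $n_v\in\{1,2,3\}$. In particular $D$ is a CAT(0) square complex, hence uniquely geodesic.

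Next, using the planar orientation of $D$, I would assign to each directed edge of $D$ a unit vector in $\{\pm\hat x,\pm\hat y\}$ subject to the two rules that opposite orientations receive opposite vectors, and that traversing the boundary of each square counterclockwise produces $\hat x,\hat y,-\hat x,-\hat y$ in this cyclic order. Such a labeling can be chosen on a single square, and the shared-edge rule then determines it uniquely on each neighboring square; since $D$ is simply connected, this local propagation yields a globally consistent labeling. I would then define $\phi$ on the vertex set of $D$ by fixing $v_0\mapsto(0,0)$ and summing the vector labels along any edge-path from $v_0$: well-definedness follows from cancellation around each square, combined with simple connectivity. Extending $\phi$ affinely across each square gives a map sending every square isometrically onto a unit square of $\bbZ^2\subset\bbR^2$.

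To check that $\phi$ is a local isometry at each vertex $v$, I would perform a direct case analysis based on $n_v$: at an interior vertex ($n_v=4$) the four squares of $D$ around $v$ are sent to the four lattice squares around $\phi(v)$, forming a full disc; at a boundary vertex ($n_v\in\{1,2,3\}$) the $n_v$ squares of $D$ are sent to $n_v$ adjacent lattice squares of $\bbR^2$, forming a sector of angle $n_v\pi/2$. For injectivity, suppose $\phi(p)=\phi(q)$ for $p\ne q$ in $D$, and let $\gamma$ be the unique CAT(0) geodesic from $p$ to $q$; since $\phi$ is a local isometry, $\phi\circ\gamma$ is a local geodesic in $\bbR^2$, hence a straight segment from $\phi(p)$ back to itself, forcing $\gamma$ to be constant and contradicting $p\ne q$. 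The main technical point is ensuring global consistency of the directed edge labeling, but simple connectivity of $D$ reduces this to the local consistency around each square, which is immediate by construction.
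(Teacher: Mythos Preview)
Your developing map $\phi$ is the right object, but the final step has a genuine gap: you never use the hypothesis that $D$ has \emph{no pair of consecutive corners of curvature $-\pi/2$}, and without it the conclusion is false. Take $D$ to be the C-shaped subcomplex $[0,3]\times[0,3]\setminus\big([0,2]\times[1,2]\big)$ of $\bbR^2$. Every interior vertex has $n_v=4$ and every boundary vertex has $n_v\le 3$, so your entire argument applies verbatim and produces $\phi=\mathrm{id}$; yet the vertices $(0,1)$ and $(0,2)$ are at distance $5$ in $D$ and distance $1$ in $\bbR^2$, so $\phi$ is not an isometric embedding. This $D$ fails to be Euclidean only because the two concave corners $(2,1)$ and $(2,2)$ are consecutive.

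The error is the sentence ``since $\phi$ is a local isometry, $\phi\circ\gamma$ is a local geodesic in $\bbR^2$''. What you have actually shown is that $\phi$ is a local isometry onto its image $\phi(D)$; at a concave boundary vertex $v$ (where $n_v=3$) the image $\phi(D)$ is not locally convex in $\bbR^2$, and a CAT(0) geodesic of $D$ that bends around $v$ is sent to a broken line in $\bbR^2$. In the C-shape above, the geodesic from $(0,1)$ to $(0,2)$ turns at both concave corners. So the argument proves neither injectivity nor the isometry claim in general, and as written it cannot, since it would otherwise apply to the C-shape.

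The paper's proof is organised quite differently and uses the missing hypothesis in an essential way. It decomposes $D$ into a sequence of ``horizontal'' combinatorial hyperplanes, shows that along each rail at most one other hyperplane can osculate (this is precisely where the absence of consecutive $-\pi/2$ corners is invoked), and glues the resulting strip embeddings into a combinatorial embedding $\psi:D\hookrightarrow\bbZ^2$. It then proves $\psi$ is an isometric embedding by checking directly that combinatorial geodesics of $D$ are sent to combinatorial geodesics of $\bbZ^2$, again using the hypothesis to rule out the two obstructing configurations. If you want to rescue your approach, you will need an argument at this last stage that genuinely exploits the ``no consecutive concave corners'' condition; the developing-map construction alone is not enough.
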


\begin{proof}
 Let $H$ be a (combinatorial) hyperplane of $D$. Let $L_+, L_-$ be the two rails of $H$. We say that another hyperplane $H'$ \textit{osculates} $H$ if $H \cap H'$ is non-empty but does not contain any square of $D$.\\
 
 \textbf{Claim 1: } There exists at most one hyperplane of $D$ that osculates $H$ along $L_+$.\\
 
In order to prove this claim, first notice that if a square of $D$ meets $H$, it meets it along an edge, by the curvature condition on  Euclidean square complexes. For every hyperplane $H'$ of $D$ osculating $H$ along $L_+$, set
 $$J_{H'}:= H' \cap H.$$
By combinatorial convexity of hyperplanes in a CAT(0) square complex complex (Lemma \ref{lem:hyperplanes_convex}), it follows that each $J_{H'}$ is a sub-segment of $L_+$. Moreover, if two such osculating hyperplanes $H'$, $H''$ define sub-segments of $L_+$ that meet along a single vertex, then the curvature condition on almost Euclidean complex implies that $H' = H''$.  Suppose now by contradiction that there exist at least two hyperplanes of $D$ osculating $H$ along $L_+$. It follows from the previous discussion that we can choose a maximal non-empty sub-segment $J$ of $L_+$ such that no edge of $J$ is contained in a hyperplane of $D$ osculating $H$ along $L_+$, and such that the extremities $v_1$ and $v_2$ of $J$ are contained in two hyperplanes $H_1$ and $H_2$ respectively which osculate $H$ along $L_+$. By maximality of $J$, the curvature condition on almost Euclidean quadrangles implies that each $v_i$ is boundary vertex with curvature $-\frac{\pi}{2}$. By definition of $J$, there exists no corner of $D$ between $v_1$ and $v_2$, which contradicts the fact that a Euclidean square complex does not contain two consecutive corners of curvature $-\frac{\pi}{2}$. This concludes the proof of Claim 1. \\

Let $H_+$ and $H_-$ be the hyperplanes of $D$ osculating $H$ along $L_+$ and $L_-$ respectively (the argument is similar if there exists only one or zero such osculating hyperplane).  Choose isometric embeddings
$$\psi: H \hra \bbR^2, \psi_+: H_+ \hra \bbR^2 \mbox{ and } \psi_-: H_-\hra \bbR^2$$

such that the images of $\psi$, $\psi_-$ and $\psi_+$ are contained in distinct horizontal hyperplanes of $\bbR^2$ and such that 
$$\psi(H\cap H_-) = \psi_-(H\cap H_-), \psi(H\cap H_+) = \psi_+(H\cap H_+).$$
From the previous discussion on curvature in almost Euclidean quadrangles, it also follows that 
$$\psi(H)\cap \psi_-(H_-) = \psi_-(H\cap H_-), \psi(H)\cap \psi_+(H_+) = \psi_+(H\cap H_+).$$
Thus, we can glue these maps together into a combinatorial embedding 
$$ H_- \cup H \cup H_+ \hra \bbR^2$$
that is a local isometry, and which sends each of these combinatorial hyperplanes inside a horizontal hyperplane of $\bbR^2$ (with its usual square tiling). Reasoning by induction, we have that $D$ is covered by a finite sequence $H_i$ of hyperplanes, which we call \textit{horizontal}, such that, for distinct $i$ and $j$, we have that  $H_i \cap H_{j}$ is empty if $j \neq i \pm 1$ and is contained in the (unique) rail common to $H_i$ and $H_j$ otherwise. By induction, we thus obtain a combinatorial embedding $\psi:D \hra \bbZ^2$ where each $H_i$ is mapped isometrically into some horizontal hyperplane of $\bbZ^2$. \\

\textbf{Claim 2:} The map $\psi:D \hra \bbZ^2$ is an isometric embedding.\\ 

To prove this claim, let $P$ be a combinatorial geodesic in $D$, which we can write as a finite concatenation 
$$P= P_0 e_1 P_1 e_2 P_2 \ldots$$
where each $e_i$ is an edge of $D$ defining one of the horizontal hyperplanes, and $P_i$ is a segment contained in one of the rails of some horizontal hyperplane of $D$. Thus, $\psi(P)$ consists of the concatenation 
$$\psi(P)= \psi(P_0)\psi(e_1)\psi(P_1)\psi(e_2)\psi(P_2) \ldots$$
of horizontal segments $\psi(P_i)$ of $\bbZ^2$ and vertical edges $\psi(e_i)$. Choosing an orientation for $P$ yields an orientation for each $e_i$ and $P_i$ and hence for their images under $\psi$. Showing that $\psi(P)$ is an oriented geodesic of $\bbZ^2$ thus amounts to showing that all the $\psi(e_i)$ have the same orientation (up or down) and that all the $\psi(P_i)$ have the same orientation (left or right). By contradiction let us assume that this is not the case. Up to an isometry of $\bbZ^2$ preserving the set of its horizontal hyperplanes, we thus have two cases to consider: \\

\begin{figure}[H]
\begin{center}
 \scalebox{1}{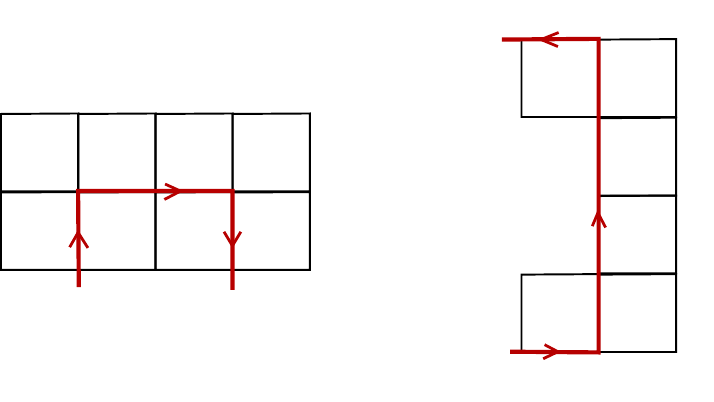}
\caption{Two impossible configurations.}
\label{fig_embedding_Z2}
\end{center}
\end{figure}

\textbf{Case 1:} $P$ contains a sub-segment $e_i P_i e_{i+1}$ such that $\psi(e_i)$ and $\psi(e_{i+1})$ have different orientations (one up, one down, as depicted on the left in Figure \ref{fig_embedding_Z2}). But by construction of $\psi$, this implies that both $e_i$ and $e_{i+1}$ define the same horizontal hyperplane of $D$, which contradicts Lemma \ref{lem:distinct_edges_hyperplane}. \\

\textbf{Case 2:} $P$ contains a sub-segment of the form $P_i e_{i+1} e_{i+2} \ldots e_k P_k$, for some $i<k$, such that $\psi(P_i)$ and $\psi(P_k)$ have different orientations (one left, one right, as depicted on the right in Figure \ref{fig_embedding_Z2}). For $i+1 \leq j \leq k$, if the square of $\bbZ^2$ to the left of $\psi(e_j)$ is not contained in $\psi(D)$, then $e_j$ is contained in the boundary of $D$. If such a square is contained in $\psi(D)$ for every $i+1 \leq j \leq k$, then the last edge of $P_i$ and the first edge of $P_k$ define the same (hyperplane), contradicting Lemma \ref{lem:distinct_edges_hyperplane}. Otherwise, choose a maximal sub-segment of $e_{i+1}\ldots e_k$ such that for each edge of it, the square of $\bbZ^2$ on its left is not in $\psi(D)$. Then its endpoints are boundary vertices of curvature $-\frac{\pi}{2}$ as $D$ is a non-degenerate disc diagram, and no other corner is between these two corners, contradicting the fact that a Euclidean square complex does not contain two consecutive corners of curvature $-\frac{\pi}{2}$. \\

This concludes the proof of Claim 2, and thus finishes the proof of Proposition \ref{prop:embedding_Euclidean}.
\end{proof}

\subsection{Isometric embeddings}

\begin{prop}\label{lem:isometric_embedding}
A  Euclidean disc diagram $\varphi: D \ra X$ is an isometric embedding.
\end{prop}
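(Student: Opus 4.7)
My plan is to show that $\varphi$ preserves the combinatorial distance between vertices; the full isometric embedding property will follow since the combinatorial metric on the vertices of a CAT(0) square complex determines the CAT(0) metric by convexity. Since $D$ is Euclidean, Proposition \ref{prop:embedding_Euclidean} provides an isometric embedding $\psi\colon D \hookrightarrow \bbZ^2$, so $D$ is itself a CAT(0) square complex and each combinatorial geodesic of $D$ corresponds, via $\psi$, to a monotone lattice path. Given vertices $a, b \in D$ and a combinatorial geodesic $P = e_1 \cdots e_k$ from $a$ to $b$, I aim to show that $\varphi(P)$ is a combinatorial geodesic in $X$. By the standard characterization of combinatorial geodesics in CAT(0) cube complexes, this reduces to verifying that the edges $\varphi(e_1), \ldots, \varphi(e_k)$ are dual to pairwise distinct hyperplanes of $X$.

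Since $P$ is already a geodesic in $D$, its edges are dual to pairwise distinct hyperplanes of $D$ by Lemma \ref{lem:distinct_edges_hyperplane}. It thus suffices to rule out the possibility that $\varphi$ collapses two distinct hyperplanes of $D$ to the same hyperplane of $X$. I argue this by contradiction: assume two distinct hyperplanes $H_1, H_2$ of $D$ both map into a common hyperplane $\tilde H$ of $X$, and split into two cases depending on whether $H_1 \cap H_2$ is empty in $D$. If $H_1 \cap H_2 \neq \emptyset$, then some square $S$ of $D$ carries edges dual to both $H_1$ and $H_2$, so $\varphi(S)$ would be a square of $X$ whose four edges are all dual to the single hyperplane $\tilde H$, contradicting the fact that the four edges of any square in a CAT(0) square complex are dual to exactly two distinct hyperplanes. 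If $H_1, H_2$ are disjoint in $D$, then under $\psi$ they are parallel lines of $\bbZ^2$; I use this planar structure to locate a short chain of edges of $D$ that runs from the carrier of $H_1$ to that of $H_2$, and argue that the image of such a chain under $\varphi$ forces either two distinct adjacent squares of $D$ to coincide in $X$ (contradicting reducedness), or two distinct squares of $X$ to share two edges (impossible in a CAT(0) square complex, since such a configuration would produce an essential loop in the link of a vertex).

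The main obstacle is the second case when $H_1$ and $H_2$ are parallel but separated by several intermediate parallel hyperplanes in $D$, for then the simple local configuration (adjacent edges dual to $H_1$ and $H_2$ sharing a vertex) is not immediately available. The natural way to overcome this is either to iterate the argument along a chain of adjacent parallel hyperplanes between $H_1$ and $H_2$, or to restrict attention to a convex combinatorial interval where the ambient shape is rectangular (using Corollary \ref{cor:intervals_CAT0convex} and Lemma \ref{lem:intervals_embed}) and the obstruction becomes transparent. A more abstract alternative would be to observe that reducedness of $\varphi$ directly implies that $\varphi$ is a combinatorial local isometry, with the Euclidean hypothesis on $D$ preventing any folding of vertex links under $\varphi$, and then to invoke the standard Cartan--Hadamard-type theorem that a combinatorial local isometry between CAT(0) cube complexes is a globally isometric embedding onto a convex subcomplex.
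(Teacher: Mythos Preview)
Your reduction to showing that distinct hyperplanes of $D$ map to distinct hyperplanes of $X$ is sound, and your treatment of the crossing case is fine. The gap is in the parallel case, and more seriously in your ``abstract alternative'', which is the route the paper actually takes. You assert that reducedness of $\varphi$ together with the Euclidean hypothesis makes $\varphi$ a combinatorial local isometry, i.e.\ that link images are \emph{full} subgraphs, so that Lemma~\ref{lem:embedded_diagram} applies directly. This is not true. Reducedness gives only that link maps are \emph{injective}. Fullness is automatic at internal vertices (the link is a $4$-cycle, and a diagonal would create a $3$-cycle in the target link, contradicting the girth condition) and at boundary vertices of curvature $\pi/2$ or $0$ (the link is a path of length $\leq 2$, same argument). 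But at a corner of curvature $-\pi/2$ the link of $v$ in $D$ is a path $a\!-\!b\!-\!c\!-\!d$ of length~$3$, and nothing prevents the images of the endpoints $a,d$ from being adjacent in the link of $\varphi(v)$: this only produces a $4$-cycle, which is allowed. The Euclidean hypothesis limits how many such corners occur and how they are arranged, but it does not eliminate them.

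The paper's proof is precisely a mechanism for repairing this failure of fullness. Whenever a corner $w$ of curvature $-\pi/2$ has non-full link image, the two exposed edges at $w$ map to two edges of $X$ that span a square; one then glues onto $D$ the unique missing square of $\bbZ^2$ at $w$ and extends $\varphi$ over it in the only possible way. The Euclidean hypothesis (no two consecutive corners of curvature $-\pi/2$) ensures that the new square meets $D$ in exactly those two edges, so the extension is well defined and remains reduced. Iterating, one obtains an enlarged reduced diagram $D_N\subset\bbZ^2$ on which all link images \emph{are} full; Lemma~\ref{lem:embedded_diagram} then applies to $\varphi_N:D_N\to X$, and the factorisation $D\hookrightarrow D_N\hookrightarrow X$ finishes the argument. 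This completion step is the missing idea in your proposal; without it neither your hyperplane argument (whose parallel case would in fact be resolved once fullness is available) nor the direct appeal to Lemma~\ref{lem:embedded_diagram} goes through.
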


We will need the following result \cite[Lemma 2.11]{HaglundWiseSpecial}: 

\begin{lem}\label{lem:embedded_diagram}
 Let $\varphi:X_1 \ra X_2$ be a combinatorial immersion between two CAT(0) square complexes, and assume that the link of a vertex $v$ of $X_1$ is sent (injectively) to a full subgraph of the link of $\varphi(v)$. Then  $\varphi$ is an isometric embedding.  \qed
\end{lem}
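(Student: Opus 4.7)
The plan is to upgrade the combinatorial link hypothesis into a genuine local isometry for the CAT(0) piecewise Euclidean metric, and then invoke the standard local-to-global principle for CAT(0) spaces. First, I would fix a vertex $v \in X_1$ and show that there exists $r_v > 0$ such that $\varphi$ restricts to an isometric embedding of the open metric ball $B(v, r_v) \subset X_1$ into $X_2$. Being a combinatorial immersion, $\varphi$ sends each closed square of $X_1$ isometrically onto a closed square of $X_2$, so the only subtle point is the vertex behaviour. For $r_v < 1$ (smaller than any edge length), the ball $B(v, r_v)$ is isometric to the Euclidean $r_v$-cone over $\mathrm{Lk}(v, X_1)$ equipped with the all-right spherical metric, and similarly for $B(\varphi(v), r_v)$. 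Both links are flag (triangle-free, in the square-complex setting) by Gromov's link condition, hence CAT(1). The hypothesis that $\mathrm{Lk}(v, X_1) \to \mathrm{Lk}(\varphi(v), X_2)$ is an injective simplicial map onto a full subgraph then upgrades to a local isometry between these CAT(1) graphs, and coning extends this to the desired isometric embedding $B(v, r_v) \hookrightarrow B(\varphi(v), r_v) \subset X_2$.

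With $\varphi$ established as a local isometry between CAT(0) spaces, the global statement follows from the standard local-to-global principle (Bridson--Haefliger, Proposition II.4.14). Concretely, for any $x, y \in X_1$, I would take the unique CAT(0) geodesic $\gamma$ from $x$ to $y$ in $X_1$. Since $\varphi$ is a local isometry, $\varphi \circ \gamma$ is a local geodesic in $X_2$ of length $d_{X_1}(x, y)$. In a CAT(0) space, local geodesics are global geodesics, so $\varphi \circ \gamma$ realises the distance from $\varphi(x)$ to $\varphi(y)$, giving $d_{X_2}(\varphi(x), \varphi(y)) = d_{X_1}(x, y)$. This is exactly the desired isometric embedding, and injectivity is then immediate.

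The main obstacle is Step 1: translating the purely combinatorial ``injective on links, full image'' condition into a genuine CAT(0)-metric local isometry. Injectivity of the link map ensures that no two edges of $X_1$ at $v$ are identified by $\varphi$, while fullness guarantees that two image directions span a square at $\varphi(v)$ in $X_2$ \emph{only if} the preimage directions already span a square at $v$ in $X_1$---that is, no ``new'' square in $X_2$ can create a shortcut between image directions in $\mathrm{Lk}(\varphi(v), X_2)$. Once made precise at the level of the all-right spherical metric on links, this suffices to match spherical distances (and therefore angles) at $v$ and $\varphi(v)$; the remainder of the proof is then a mechanical application of the CAT(0) local-to-global geodesic principle.
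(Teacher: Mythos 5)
The paper does not actually prove this lemma: it is imported verbatim from Haglund--Wise and used as a black box, so there is no internal proof to compare against. Measured against the standard argument in the literature -- which is combinatorial, showing that a combinatorial geodesic of $X_1$ maps to a path of $X_2$ crossing no hyperplane twice, with fullness of links used to rule out an innermost backtrack -- your route through CAT(0) geometry is genuinely different and essentially sound. The core computation works: the link map is $1$-Lipschitz for the all-right spherical metric; if $d(\varphi u,\varphi u')<\pi$ then $\varphi u,\varphi u'$ are equal or adjacent, and injectivity plus fullness pulls the edge back, so $d(u,u')=d(\varphi u,\varphi u')$; hence $\pi$-truncated link distances (i.e.\ Alexandrov angles) are preserved, the Euclidean cones embed isometrically, $\varphi$ is a local isometry, and local geodesics in a CAT(0) target are global. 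Two caveats. First, your intermediate claim that the link map is a ``local isometry between CAT(1) graphs'' extended by coning is the wrong formulation: an injective map onto a full subgraph need not preserve link distances that are $\geq\pi$, and what the cone formula requires (and what your final paragraph correctly identifies) is only preservation of the $\pi$-truncated distance; the proof should be phrased that way from the start. Second, and more substantively, your argument delivers an isometric embedding for the $\ell_2$ (CAT(0)) metric, whereas every downstream use of the lemma in this paper (grids, the factorisation $I_m\times T_n\hookrightarrow X$, counting edges on which paths backtrack) concerns the combinatorial $\ell_1$ metric. This is repairable by one more standard step -- by uniqueness of CAT(0) geodesics the image is an $\ell_2$-convex, hence combinatorially convex, subcomplex, and combinatorially convex subcomplexes are $\ell_1$-isometrically embedded -- but as written the proposal proves a statement adjacent to the one the paper actually consumes.
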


\begin{proof}[Proof of Lemma \ref{lem:isometric_embedding}]
 By Proposition \ref{prop:embedding_Euclidean}, we can assume that $D$ is a subcomplex of $\bbR^2$.  We now describe an algorithm to \textit{complete} the Euclidean disc diagram $\varphi: D \ra X$ into a Euclidean disc diagram $\varphi': D' \ra X$, where $D'$ is a subcomplex of $\bbR^2$ containing $D$ such that the inclusion $D \hra D'$ is an isometric embedding, and such that for every vertex $w$ of $D'$, the induced map on the link of $w$ sends the link of $w$ in $D'$ to a full subgraph of $X$. 
 
 We proceed by induction and construct maps $\varphi_i:D_i \ra X$ where $(D_i)$ is an increasing sequence of non-degenerate sub-diagram of $\bbR^2$, such no $D_i$  contains two consecutive corners of negative curvature, and such that each inclusion $D_i \hra D_{i+1}$ is an isometric embedding. Set $D_0:= D$ and $\varphi_0:= \varphi : D_0 \ra X$. Suppose that $D_0, \ldots, D_n$ and $\varphi_0, \ldots, \varphi_n$ have been constructed. If $D_n$ contains a vertex $w$ such that the map $\varphi_n$ sends the link of $w$ to a non-full subgraph of $X$, then, since the disc diagram is non-degenerate, necessarily $w$ is a corner of curvature $-\frac{\pi}{2}$. Let $C$ be the unique square of $\bbR^2$ containing $w$ which is not contained in $D_n$. Then $C \cap D_n$ consists of exactly two edges, since $D_n$ does not contain two consecutive corners of negative curvature by the inductive hypothesis. Thus, there is a unique way to extend $\varphi_n:D_n \ra X$ to $\varphi_{n+1}:D_{n+1} \ra X$, where $D_{n+1}:= D_n \cup C$. Moreover, the inclusion $D_n \hra D_{n+1}$ is an isometric embedding,  $D_{n+1}$ does not contain two consecutive corners of negative curvature, and  $\varphi_{n+1}:D_{n+1} \ra X$ is reduced.   
 
 This algorithm eventually stabilises at some stage $N \geq 0$. Indeed, if we choose a big square of $\bbR^2$ containing $D$, then it contains also every $D_i$.  The map $D \ra X$ thus factorises as $D \hra D_N \overset{\varphi_N}{\ra} X$. Moreover, as $\varphi_N$ is reduced by construction, it is automatically an immersion, as $D_N$ is a subcomplex of $\bbR^2$. Thus, $\varphi_N$ is an isometric embedding by Lemma \ref{lem:embedded_diagram}, and the same holds for $D \ra X$. 
\end{proof}

\subsection{Euclidean quadrangles in disc diagrams}

\begin{definition}[width]
 A quadrangle $\varphi:D \ra X$ between geodesic segments $\gamma_-, \gamma+$ of $X$ is \textit{of width at most $r$} if its  sides are at Hausdorff distance at most $r$ in $D$. 
 
 Analogously, a planar CAT(0) square complex $D$, homeomorphic to a disc, between segments $P_-$ and $P_+$ is \textit{of width at most $r$} if $P_-$ and $P_+$ are at Hausdorff distance at most $r$ in $D$.
\end{definition}

\begin{prop}\label{cor:embedding_euclidean}
 For every $r \geq 0$ , there exists a constant $L_{\mathrm{emb}}(r)\geq 0$  such that the following holds: Let $\varphi:D \ra X$ be an almost Euclidean quadrangle, of width at most $r$, between two geodesics $\gamma_-$ and $\gamma_+$, and let $\gamma_-'\subset \gamma_-$ be a sub-segment at distance at least $L_{\mathrm{emb}}(r)$ from the endpoints of $\gamma_-$.  Then there exists a Euclidean sub-quadrangle $D'$ of $D$, of width at most $r$, between  $\gamma_-'$ and a sub-segment $\gamma_+' \subset \gamma_+$.
\end{prop}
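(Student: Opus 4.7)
My plan is to cut out a sub-quadrangle $D'$ from $D$ by slicing along the two hyperplanes of $D$ dual to the first and last edges of $\gamma_-'$, and then to verify that the resulting $D'$ is Euclidean. I would choose $L_{\mathrm{emb}}(r)$ of order $r$, fine-tuned below.

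Let $e_1, e_m$ denote the first and last edges of $\gamma_-'$, and let $H_1, H_m$ denote their dual hyperplanes in $D$, with carriers $K_1, K_m$ (convex by Lemma~\ref{lem:hyperplanes_convex}). Since $D$ is a planar CAT(0) square complex homeomorphic to a disc (Lemma~\ref{lem:disc_CAT0}), each $H_i$ is an arc whose two endpoints lie on $\partial D$; one is the midpoint of $e_i$ on $P_-$, and by Lemma~\ref{lem:distinct_edges_hyperplane} the other lies on $P_u$, $P_+$, or $P_v$. The first task is to rule out the gate cases. Suppose $H_1$ had an endpoint on $P_u$: it would separate off a sub-disc $\Delta$ of $D$ containing $u_-$, bounded by $H_1$, a segment of $P_-$ of length at least $L_{\mathrm{emb}}(r)$, and a segment of $P_u$. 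Any interior point $x$ of this $P_-$-segment has a point of $P_+$ within distance $r$ by the width hypothesis, but $P_+$ lies on the opposite side of $H_1$, so every geodesic from $x$ to $P_+$ crosses $H_1$ at distance $\leq r$ from $x$. Provided $L_{\mathrm{emb}}(r)$ is chosen large enough, this gives a sub-segment of $P_-$ of length exceeding $r$ whose points are uniformly close to $H_1$, so a Gauss--Bonnet computation on $\Delta$ using Theorem~\ref{GaussBonnet} forces a singularity of $D$, contradicting the almost Euclidean hypothesis.

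Having established that $H_1, H_m$ travel from $P_-$ to $P_+$, I would define $D'$ to be the sub-complex of $D$ bounded by the rail of $K_1$ on the side containing $\gamma_-'$, a sub-segment of $P_+$, the corresponding rail of $K_m$, and the sub-segment of $P_-$ containing $\gamma_-'$. These rails are combinatorial geodesics by Lemma~\ref{lem:hyperplanes_convex}, so $D'$ is a genuine quadrangle whose lower side contains $\gamma_-'$ and whose upper side $\gamma_+'$ is a sub-segment of $\gamma_+$. Its width, inherited from $D$, is at most $r$.

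It remains to verify $D'$ is Euclidean. Internal vertices of negative curvature and corners of curvature $\leq -\pi$ in $D'$ would give rise to singularities of $D$ of the same type (since $D'$ is a sub-complex, a vertex of $D'$ only gains adjacent squares when passing to $D$), contradicting the almost Euclidean hypothesis. On the portions of $P_-, P_+$ inside $D'$, pairs of consecutive $-\pi/2$ corners are already forbidden by the almost Euclidean hypothesis on $D$. On a new gate, two consecutive $-\pi/2$ corners on a rail of $K_1$ would correspond to a concave bend of the hyperplane $H_1$ into $D'$: the small sub-disc of $D$ cut off between the rail and a short arc of $H_1$ would have, by Theorem~\ref{GaussBonnet}, total curvature $2\pi$, forcing either an interior vertex of negative curvature or consecutive $-\pi/2$ corners on $P_-$ or $P_+$, and again contradicting the almost Euclidean hypothesis. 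The main obstacle will be making this last Gauss--Bonnet argument rigorous and choosing $L_{\mathrm{emb}}(r)$ large enough to ensure that no such bend region on a rail can propagate back to the ambient gates $P_u, P_v$ of $D$, where the almost Euclidean hypothesis no longer forbids bad configurations.
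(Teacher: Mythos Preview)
Your overall architecture --- cut $D$ along the rails of two hyperplanes to produce a sub-quadrangle with geodesic gates --- is exactly the paper's. The difference lies in how the hyperplanes are chosen and justified. You take the hyperplanes dual to the first and last edges of $\gamma_-'$ and try to force them across to $P_+$ via Gau\ss--Bonnet on the hypothetical region $\Delta$. That step, as written, does not go through: the boundary of $\Delta$ decomposes into three geodesic arcs (a sub-arc of $P_-$, a sub-arc of $P_u$, and a rail), and the standard count gives total curvature at most $3\cdot\tfrac{\pi}{2}$ from the three corner vertices plus at most $\tfrac{\pi}{2}$ from the interior of each geodesic side, which is comfortably above $2\pi$ and yields no contradiction. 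Your observation that the long $P_-$-arc is uniformly $r$-close to the rail is correct, but it does not by itself produce a curvature defect; nothing in the almost-Euclidean hypothesis prevents $P_u$ from being long enough to accommodate $H_1$. You also do not check that $H_1$ and $H_m$ are disjoint, without which the region between them need not be a quadrangle.

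The paper sidesteps all of this by exploiting a feature of almost Euclidean quadrangles you do not use: vertices have uniformly bounded valence, so an $r$-ball contains at most $N(r)$ edges. One then pigeonholes among the first $2N(r{+}1)+1$ edges of $\gamma_-$ near each endpoint $u_i$ to find an edge $a_i$ whose hyperplane $H_{a_i}$ avoids the $r$-balls around both $u_i$ and a nearby separating vertex $v_i$. Because those $r$-balls disconnect $D$ (width $\leq r$), avoidance forces $H_{a_i}$ to land on $P_+$ and forces $H_{a_1}, H_{a_2}$ to be disjoint --- no Gau\ss--Bonnet needed. The Euclidean sub-quadrangle so obtained has lower side strictly containing $\gamma_-'$, and a second trim yields the desired $D'$. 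For your final step (ruling out consecutive $-\tfrac{\pi}{2}$ corners on a rail), note the direct argument: a rail vertex interior to the rail is interior to $D$ and picks up two extra carrier squares there, so a $-\tfrac{\pi}{2}$ corner in $D'$ would already give an interior vertex of $D$ with at least five squares, an internal singularity --- no sub-disc excision is required.
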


\begin{proof}
Since vertices of almost Euclidean quadrangles have  uniformly bounded valence by definition, there exists a constant $N(r)$ such that every combinatorial ball of radius $r$ in a non-degenerate almost Euclidean quadrangle contains at most $N(r)$ edges. Moreover, there exists a constant $L_1(r)$ such that the following holds:

Let $\varphi:D \ra X$ be an almost Euclidean disc diagram of width at most $r$. Let $v_1, v_2$ be the vertices of $\gamma_-$ at distance $2N(r+1)+1$ from the endpoints $u_1, u_2$ of $\gamma_-$. Since $D$ has width at most $r$, the combinatorial ball of radius $r$ around $v_1$ or $v_2$ disconnects $D$. In particular, a rail of a hyperplane crossing an edge of $\gamma_-$ between $u_i$ and $v_i$ which does not cross an edge of $B(v_i, r)$ contains at most $L_1(r)$ edges.

Now set 
$$L_{\mathrm{emb}}(r):=(2N(r+1)+1)+ L_1(r).  $$

Let $\varphi:D \ra X$ be a non-degenerate almost Euclidean (reduced) disc diagram of width at most $r$, between two geodesics $\gamma_-$ and $\gamma_+$, with $|\gamma_-| \geq 2L_{\mathrm{emb}}(r)$. Let $v_1, v_2$ be the vertices of $\gamma_-$ at distance $2N(r+1)+1$ from the endpoints $u_1, u_2$ of $\gamma_-$. By Lemma \ref{lem:distinct_edges_hyperplane}, for each $i=1, 2$, there exists an edge $a_i$ of $\gamma_-$ between $u_i$ and $v_i$ such that the combinatorial hyperplane $H_{a_i}$ does not meet  $B(u_i, r) \cup B(v_i, r)$. In particular, each $H_{a_i}$ crosses $\gamma_+$. Moreover, $H_{a_1}$ and $H_{a_2}$ are disjoint as in addition each $B(v_i,r)$ disconnects $D$. Thus, $H_{a_1}$ and $H_{a_2}$ defines a Euclidean sub-quadrangle $D'$ of $D$ whose gates are contained in the rails of some combinatorial hyperplanes  (see Figure \ref{fig_Euclidean}). \\

\begin{figure}[H]
\begin{center}
 \scalebox{0.75}{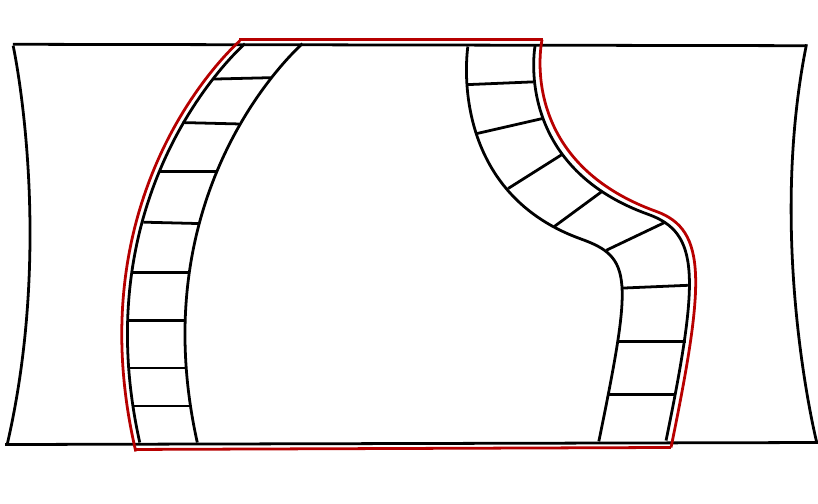}
\caption{The construction of the Euclidean sub-quadrangle $D'$.}
\label{fig_Euclidean}
\end{center}
\end{figure}

In particular, $\varphi_|:D' \ra X$ is a Euclidean quadrangle, and the restriction of $\varphi$ to $D'$ isometrically embeds in $\bbR^2$ by \ref{prop:embedding_Euclidean}. Since the rails of each $H_{a_i}$ have length at most $L_1(r)$, and $\gamma_-$ is at distance at least $L_1(r)$ from $a_1$ and $a_2$,  we can choose a sub-quadrangle $\varphi'':D'' \ra X$ of width at most $r$ between  $\gamma_-' $ and a sub-segment $\gamma_+'' \subset \gamma_+$, which concludes the proof. 

\end{proof}

\section{Grids and their combinatorial geometry}
\label{sec:grids}

In this section, we study the way certain subcomplexes of the Euclidean plane with its standard tiling can be mapped to a given CAT(0) square complex. Throughout this section, $X$ is a given CAT(0) square complex. 

\begin{definition}[grid]\label{def:grid}
A \textit{grid} of $D$ is a CAT(0) square complex isometric to $I_l \times I_h$, $l \geq 1$, $h \geq 0$, where $I_l$ and $I_h$ denote a simplicial segment on $l$ and $h$ edges respectively. Such a grid is said to be of \textit{length} $l$ and \textit{width} $h$.

A \textit{grid of $X$} is a reduced disc diagram $\varphi: D \ra X$.
\end{definition}

\begin{rmk}
 It follows from Lemma \ref{lem:embedded_diagram} that a grid of $X$ is an isometric embedding. Therefore, we will sometimes use the term `grid of $X$' to denote the image of such a disc diagram.
\end{rmk}

 \begin{prop}\label{lem:factorisation_grid}
  Let $\varphi:I_m \times I_n \ra X$ be a combinatorial map such that  $\varphi_{I_m \times \{0\}}$ is a combinatorial geodesic. Then $\varphi$ factorises as 
  $$ I_m \times I_n \xrightarrow{id \times \varphi_n} I_m \times T_n \xhookrightarrow{h} X,$$
  where $T_n$ is a simplicial tree, $\varphi_n: I_n \ra T_n$ is a combinatorial map, and $  I_m \times T_n \hra X$ is an isometric embedding.
 \end{prop}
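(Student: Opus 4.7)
My plan is to proceed by induction on the height $n$. The base case $n = 0$ is trivial: take $T_0$ to be a single vertex, so that $I_m \times T_0 \cong I_m$, and recall that the bottom row, being a combinatorial geodesic, isometrically embeds into $X$.

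For the inductive step, given $\varphi \colon I_m \times I_{n+1} \to X$, I would first restrict to $I_m \times \{0, \ldots, n\} \cong I_m \times I_n$ and apply the induction hypothesis to obtain a tree $T_n$, a combinatorial map $\varphi_n \colon I_n \to T_n$, and an isometric embedding $h_n \colon I_m \times T_n \hookrightarrow X$. The task is then to incorporate the top horizontal strip $I_m \times [n, n+1]$. Two hyperplane observations drive the construction. First, the $m+1$ vertical edges $e_i := \varphi(\{i\} \times [n, n+1])$ are all dual to a single hyperplane $K$ of $X$: consecutive ones are opposite sides of a common grid square, hence parallel, and the claim follows by transitivity. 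Second, $K$ is distinct from each hyperplane $H_0, \ldots, H_{m-1}$ dual to an edge of the bottom geodesic; otherwise a grid square of the top strip would map to a square in $X$ with both pairs of opposite sides dual to the same hyperplane, contradicting the transversality of two hyperplanes meeting a common square in a CAT(0) cube complex.

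I would then construct $T_{n+1}$ by case analysis on whether the edge $e_0$ lies in the image of $h_n$. If $e_0 \notin \mathrm{Im}(h_n)$, take $T_{n+1}$ to be $T_n$ with a new pendant edge $[\varphi_n(n), \ast]$ attached at $\varphi_n(n)$, extend $\varphi_n$ by sending $n+1$ to $\ast$, and extend $h_n$ by mapping the new squares $\{i, i+1\} \times [\varphi_n(n), \ast]$ to $\varphi([i, i+1] \times [n, n+1])$. If $e_0 \in \mathrm{Im}(h_n)$, then since $e_0$ is dual to $K \ne H_i$ for all $i$, the edge $e_0$ must correspond to a vertical edge of $I_m \times T_n$ adjacent to $\varphi_n(n)$ (it cannot be horizontal); set $T_{n+1} = T_n$ and define $\varphi_{n+1}(n+1)$ to be the other endpoint of that tree edge, so the top row retraces an existing edge. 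In both cases, iterating the uniqueness of squares in a CAT(0) square complex spanning a given L-shape yields $\varphi(i, n+1) = h_{n+1}(i, \varphi_{n+1}(n+1))$ for all $i$, completing the factorisation.

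The main obstacle is verifying that $h_{n+1}$ remains an isometric embedding, which I plan to do via Lemma~\ref{lem:embedded_diagram}: $h_{n+1}$ is a combinatorial immersion by construction together with $K \ne H_i$. For the full-subgraph condition on each vertex link, the key step is a CAT(0) link argument: if two image-hyperplanes crossed at a square in $X$ not belonging to the image, one would exhibit three image-edges at a vertex pairwise spanning squares, producing a triangle of length $3$ in the link at that vertex and violating the CAT(0) link condition. Combined with the isometric embedding property of $h_n$ inherited from the induction hypothesis, this yields fullness of links at every vertex of $I_m \times T_{n+1}$, completing the induction.
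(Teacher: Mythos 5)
Your proposal is correct and follows essentially the same route as the paper: induction on $n$, a case split according to whether the new row retraces an existing level of the tree or requires attaching a pendant edge, propagation along the row via the uniqueness of a square in a CAT(0) square complex spanning two given adjacent edges, and finally Lemma~\ref{lem:embedded_diagram} together with the girth-$4$ link condition to upgrade the extended map to an isometric embedding. The only differences are cosmetic: you phrase the dichotomy in terms of the edge $e_0$ and make the hyperplane bookkeeping ($K\neq H_i$) explicit, whereas the paper phrases it in terms of coincidence of squares, and you start the induction at $n=0$ rather than $n=1$.
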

 
 \begin{proof}
  We prove the result by induction on $n$, for $m$ fixed. The result for $n=1$ is immediate as combinatorial hyperplanes embed isometrically in a CAT(0) cube complex \cite[Theorem 4.10]{SageevCubeComplex}. 
  Suppose the result is true at rank $n \geq 1$ and consider a map $\varphi:I_m \times I_{n+1} \ra X$ satisfying the assumptions of the lemma. The restriction of $\varphi$ to $I_m \times I_n$ factorises by the induction hypothesis. Denote by $C_{i,j}$ the square of $I_m \times I_{n+1}$ on the $i$-th column (starting from the left) and the $j$-th line (starting from the bottom).  
  
  If there exists a $1 \leq i_0 \leq m$ such that  $\varphi(C_{i_0,n+1})= \varphi(C_{i_0,j_0})$, then $\varphi(C_{i,n+1})= \varphi(C_{i,j_0})$ for every $1 \leq i \leq m$. Indeed, this follows by induction, by starting from $C_{i_0,j_0}$ and moving through adjacent squares in the $j_0$-th line, since, in the CAT(0) square complex $X$, two squares sharing two adjacent edges are the same.
  Thus, if there exists a $1 \leq i_0 \leq m$ such that  $\varphi(C_{i_0,n+1})= \varphi(C_{i_0,j_0})$, then $\varphi$ factorises as 
  $$I_m \times I_{n+1}  \xrightarrow{id \times \varphi_{n+1}} I_m \times T_n \hra X,$$
  where  $I_m \times T_n \hra X$ comes from the induction hypothesis, $\varphi_{n+1}: I_{n+1} \ra T_n$ is the unique combinatorial map such that $id \times \varphi_{n+1}$ restricts to $id \times \varphi_{n}$ on $I_m \times I_n$ and is such that $(id \times \varphi_{n+1})(C_{i,n+1})= (id \times \varphi_{n})(C_{i,j_0})$ for every $1 \leq i \leq m$.
  
  If there does not exist  a $1 \leq i_0 \leq m$ such that  $\varphi(C_{i_0,n+1})= \varphi(C_{i_0,j_0})$, then write $I_{n+1}$ as the reunion of $I_n$ and an edge $e$ glued along a vertex $v$, and consider the tree $T_n\cup_{\varphi_n(v)} e'$ obtained by attaching to $T_n$ an edge $e'$ along $\varphi_n(v)$. Then $\varphi$ factorises as 
  $$I_m \times I_{n+1}  \xrightarrow{id \times \varphi_{n+1}} I_m \times (T_n\cup_{\varphi_n(v)} e' )  \xrightarrow{h'} X,$$
  where $\varphi_{n+1}:I_{n+1} \ra T_n\cup_{\varphi_n(v)} e'$ is the unique combinatorial map that restricts to $\varphi_n$ on $I_n$ and sends $e$ to $e'$, and $h'$ restricts to $h$ on $I_m \times T_n$. By construction, the map $I_m \times (T_n\cup_{\varphi_n(v)} e' ) \xrightarrow{h'} X$ is a local isometry, i.e. it is injective on the link of vertices, as $h$ already is. Moreover, as links of vertices of $X$ have girth at least $4$, it follows that for every vertex $v$ of  $I_m \times (T_n\cup_{\varphi_n(v)} e' )$, the image of the link of $v$ under $h'$ is a full subgraph of the link of $h'(v)$. Thus, $h'$ is an isometric embedding by Lemma \ref{lem:embedded_diagram}.
 \end{proof}

\section{Proof of the main theorems}
\label{sec:proofs}

This section is devoted to the proof of Theorems A and E. The proofs being extremely similar, we start with the slightly more technical Theorem E and explain in Section \ref{sec:ThmA} how to adapt the proof to deduce Theorem A. 
 
\subsection{Strongly contracting axes}
\label{sec:axes}
A CAT(0) square complex has two metrics naturally associated it, depending on whether we are considering the $\ell_1$- or $\ell_2$-metric on its squares. In the $\ell_1$-case, we recover the combinatorial distance on the set of vertices, and in the $\ell_2$-case we recover the CAT(0) metric. In what follows, we will indicate with subscripts which metric is being considered, whether talking about the translation length $|\cdot|_i$, the distance $d_i(\cdot, \cdot)$, etc.

For the rest of Sections \ref{sec:axes}, \ref{sec:corridor} and \ref{sec:finish}, we assume that we are given a CAT(0) square complex $X$, a group $G$ acting on it by isometries, and an element $g \in G$ which is strongly contracting (for the $\ell_2$-metric) and satisfies the weak WPD condition. An \textit{$\ell_2$-axis} of $g$ will mean a geodesic line of $X$ (for the $\ell_2$-metric) on which $g$ acts by translation. An \textit{$\ell_1$-axis} (or \textit{combinatorial axis}) of $g$ will mean a geodesic line of the $1$-skeleton of $X$ (for the $\ell_1$-metric) on which $g$ acts by translation. Combinatorial axes of hyperbolic isometries of CAT(0) square complexes were shown to exist by Haglund \cite{HaglundSemiSimple}. We choose, for $i= 1$ or $2$, an $\ell_i$-axis $\Lambda_i$  for $g$.

\begin{deflem}[tubular constant]\label{def:tubular}
Since $\Lambda_2$ is strongly contracting by assumption, \cite[Corollary 3.4]{BestvinaFujiwaraHigherRank} implies that we can choose a \textit{tubular constant} $C_2$ such that for every two points $x, y$ of $X$ which project to two points of $\Lambda_2$ at distance at least $C_2$, then any geodesic from $x$ to $y$ meets the $C_2$-neighbourhood of $\Lambda$.\qed
\end{deflem}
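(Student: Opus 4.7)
The statement is essentially a restatement of Corollary 3.4 of \cite{BestvinaFujiwaraHigherRank} with the two constants involved collapsed into a single one. My plan is simply to unpack that citation and verify that one constant suffices for both roles.

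The strongly contracting hypothesis on $\Lambda_2$ furnishes a uniform bound $B$ on the diameter of the closest-point projection to $\Lambda_2$ of any CAT(0)-ball of $X$ disjoint from $\Lambda_2$. The key consequence, which is the content of \cite[Corollary 3.4]{BestvinaFujiwaraHigherRank}, is a tubular neighbourhood lemma: for every $D \geq 0$ there exists $C(D) \geq 0$ such that whenever $x, y \in X$ project onto $\Lambda_2$ at points at distance at least $C(D)$, every CAT(0)-geodesic from $x$ to $y$ enters the open $D$-neighbourhood of $\Lambda_2$. The proof idea is standard: decompose such a geodesic into maximal sub-arcs lying in the complement of the open $D$-neighbourhood of $\Lambda_2$; each such sub-arc is contained in a ball disjoint from $\Lambda_2$ and therefore projects to a subset of $\Lambda_2$ of diameter at most $B$; by concatenating these projections along the geodesic one bounds the distance between the projections of $x$ and $y$ in terms of $B$ and $D$, which contradicts the lower bound $C(D)$ as soon as $C(D)$ is chosen large enough.

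Having this tubular lemma at hand, I would set
\[
C_2 := \max\{C(D), D\}
\]
for any convenient choice of $D$ (for instance $D := B+1$). Then the hypothesis of the Definition-Lemma strengthens the hypothesis of the tubular lemma (projections at distance $\geq C_2 \geq C(D)$), and the conclusion weakens it (the $C_2$-neighbourhood contains the $D$-neighbourhood), so the single constant $C_2$ plays both required roles. There is essentially no obstacle beyond this bookkeeping, since the substantive geometric content is already packaged in the cited corollary; the only thing to check is that the argument of \cite{BestvinaFujiwaraHigherRank} applies verbatim to the strongly contracting axis $\Lambda_2$ in our CAT(0) square complex $X$, which it does since it only uses the strong contraction property and the geodesic structure of $X$.
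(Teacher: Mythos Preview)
Your proposal is correct and matches the paper's approach: the paper treats this Definition-Lemma as a direct consequence of \cite[Corollary 3.4]{BestvinaFujiwaraHigherRank} and gives no further argument beyond the citation, while you have simply unpacked that citation and made explicit the trivial bookkeeping step of collapsing the two constants into one via $C_2 := \max\{C(D), D\}$.
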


\begin{deflem}[the constant $\delta$]\label{lem:distance_axes}
 There exists a constant $\delta>0$ such that $\Lambda_1$ and $\Lambda_2$ are at Hausdorff distance at most $\delta$ from one another (for both the $\ell_1$- and the $\ell_2$-metric). 
\end{deflem}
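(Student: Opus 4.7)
The plan is to first bound the $\ell_2$-Hausdorff distance between $\Lambda_1$ and $\Lambda_2$ using the cocompactness of the $\langle g \rangle$-action on each axis, and then transfer the bound to the $\ell_1$-metric via the standard bi-Lipschitz equivalence $d_2 \leq d_1 \leq \sqrt{2}\, d_2$ valid on any $2$-dimensional CAT(0) cube complex.

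For the $\ell_2$-bound, I would fix a vertex $v \in \Lambda_1$ and set $C := d_2(v, \Lambda_2)$, which is finite because $\Lambda_2$ is a closed subset of $X$. Since $g$ acts on $X$ by $\ell_2$-isometries and preserves both axes, $d_2(g^n v, \Lambda_2) = C$ for every $n \in \bbZ$. The orbit $\{g^n v\}_{n \in \bbZ}$ cuts $\Lambda_1$ into combinatorial segments of length $|g|_1$, so any $r \in \Lambda_1$ satisfies $d_1(r, g^n v) \leq |g|_1$ for some $n$, whence $d_2(r, \Lambda_2) \leq |g|_1 + C$ by the triangle inequality together with $d_2 \leq d_1$. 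A symmetric argument, starting from a point $p \in \Lambda_2$ with $C' := d_2(p, \Lambda_1)$ and using that $\{g^n p\}$ partitions $\Lambda_2$ into arcs of $\ell_2$-length $|g|_2 \leq |g|_1$, produces a uniform bound $d_2(r, \Lambda_1) \leq |g|_1 + C'$ for $r \in \Lambda_2$. This yields an $\ell_2$-Hausdorff bound $K := |g|_1 + \max(C, C')$, which transfers to an $\ell_1$-Hausdorff bound $\sqrt{2}\, K$ via the global inequality $d_1 \leq \sqrt{2}\, d_2$ (itself obtained by decomposing an $\ell_2$-geodesic into pieces contained in individual squares and applying Cauchy--Schwarz on each). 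Setting $\delta := \sqrt{2}\, K$ then gives the required constant for both metrics simultaneously.

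No serious obstacle arises: the argument is formal and relies only on the existence of a finite combinatorial translation length $|g|_1$ (granted by Haglund's theorem, since $g$ is $\ell_2$-loxodromic and hence has unbounded orbit in the $1$-skeleton) and on the fact that $g$ acts by isometries preserving both axes. In particular, neither the strong contraction of $\Lambda_2$ nor the weak WPD condition of $g$ plays any role at this stage; these hypotheses intervene only in the later constructions of the section.
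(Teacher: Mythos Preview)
Your argument is correct, and it is genuinely different from---and considerably simpler than---the paper's own proof. You use only two ingredients: the cocompactness of the $\langle g\rangle$-action on each axis (so that a single distance $d_2(v,\Lambda_2)$ propagates to a uniform bound along the whole of $\Lambda_1$, and symmetrically), and the bi-Lipschitz comparison $d_2\leq d_1\leq \sqrt{2}\,d_2$ between the two length metrics on a square complex. Neither the strong contraction of $\Lambda_2$ nor any feature of the ambient geometry beyond dimension~$2$ enters, and indeed your argument would produce a finite Hausdorff bound between \emph{any} $\ell_1$-axis and $\ell_2$-axis of the same isometry of a CAT(0) square complex.

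By contrast, the paper's proof is tailored to the contracting setting: it invokes the tubular constant $C_2$ of Definition~\ref{def:tubular} and a further constant $C_2'$ from \cite{BestvinaFujiwaraHigherRank}, then argues geometrically. One takes a long CAT(0) geodesic between two points $x_1$ and $g^{2m+m'}x_1$ of $\Lambda_1$, uses the tubular property to force a subsegment $[y,z]$ of it into the $C_2$-neighbourhood of $\Lambda_2$, and finally exploits the embedding of the combinatorial interval $\mathrm{Int}_X(x_1,g^{2m+m'}x_1)$ into $\bbR^2$ together with the contraction bound to conclude that $\Lambda_1$ itself passes within $C_2+C_2'$ of $\Lambda_2$. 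The payoff is that the resulting $\delta$ is expressed directly in terms of the contraction constants $C_2,C_2'$ that reappear later in the section, but since $\delta$ is only ever used as an unspecified constant, this is not essential. Your route is the more economical one; note incidentally that the paper itself uses the same metric comparison (with the looser constant $2$ in place of your $\sqrt{2}$) in the proof of Lemma~\ref{lem:distance_translates}.
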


\begin{proof}
By \cite[Lemma 3.8]{BestvinaFujiwaraHigherRank}, there exists a constant $C_2'$ such that for every pair of points $u, v$ in the $C_2$-neighbourhood of $\Lambda_2$, a ball disjoint from the CAT(0) geodesic $[u,v]$ projects on  $[u,v]$ to a subset of diameter strictly less than $C_2'$.   Choose integers $m, m' \geq 1$ such that $m|g|_2 > C_2$ and $m'|g|_2 > C_2' + |g|_1+2$.  Let $x_1$ be a point of $\Lambda_1$ and let $x_2$ be its projection  on $\Lambda_2$ (and thus $g^{2m+m'}x_2$ is the projection of $g^{n+2m}x_1$ on $\Lambda_2$).

Let $u$ be a point of the CAT(0) geodesic between $x_1$ and $g^{m'+2m}x_1$ which projects to the point $g^mx_2 \in \Lambda_2$. Since $d_2(x_2,g^mx_2)=m|g|_2>C_2$, there exists a point $y$ in the sub-segment between $x$ and $u$ and a point $y_2\in \Lambda_2$ in between $x_2$ and $g^mx_2$ such that $d_2(y,y_2) \leq C_2$. Analogously, let $v$ be a point of the geodesic between $x$ and $g^{m+2m'}x_1$ which projects to the point $g^{m+m'}x_2 \in \Lambda_2$. Since $d_2(g^{m'+m}x_2,g^{m'+2m}x_2)=m|g|_2>C_2$, there exists a point $z$ in the sub-segment between $v$ and $g^{m'+2m}x$ and a point $z_2\in \Lambda_2$ in between $g^{m'+m}x_2$ and $g^{m'+2m}x_2$ such that $d_2(z,z_2) \leq C_2$. By convexity of the CAT(0) metric, it follows that the geodesic between $y$ and $z$ is contained in the $C_2$-neighbourhood of $\Lambda_2$.

Let us now consider the combinatorial interval $\mbox{Int}_{X}(x_1, g^{m'+2m}x_1)$ between $x_1$ and $g^{m'+2m}x_1$. By Corollary \ref{cor:intervals_CAT0convex} and Lemma \ref{lem:intervals_embed}, it is a convex subcomplex (for the CAT(0) metric) and we can think of it as a subcomplex of $\bbR^2$ with its standard tiling. Let $K$ be the subset of $\bbR^2$ consisting of those points whose orthogonal projection on the bi-infinite line generated by the segment $[y, z]$ is contained in $[y, z]$. It follows from the definition of $C_2'$ that  $\mbox{Int}_{X}(x_1, g^{m'+2m}x_1) \cap K$ is contained in the $C_2'$-neighbourhood of $[y, z]$. Thus, every combinatorial geodesic from $x_1$ to $g^{m'+2m}x_1$ contains a sub-interval of $\ell_2$-length at least $d_2(y,z) \geq |g|_1+2$, and thus a sub-segment of $\ell_1$-length at least $|g_1|$, which is $C_2'$-close to $[y,z]$. In particular, the axis $\Lambda_1$ contains a sub-segment of $\ell_1$-length at least $|g_1|$ which is $(C_2' + C_2)$-close to $\Lambda_2$ for the $\ell_2$-metric, and thus $\Lambda_1$ and $\Lambda_2$ are at Hausdorff distance at most $C_2' + C_2$.
\end{proof}

An immediate corollary is the following:

\begin{defcor}[the constant $C_\Box$]\label{def:C_Box}
 There exists a constant $C_\Box$ such that  every grid whose interior is disjoint from the axis $\Lambda_1$ projects on $\Lambda_1$ (for the $\ell_1$-metric) with a diameter strictly smaller than $C_\Box$.\qed
\end{defcor}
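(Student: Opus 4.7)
The plan is to reduce the desired bound to the strong contraction of $\Lambda_2$. Since the identity $(X,d_1) \to (X,d_2)$ is $\sqrt{2}$-bi-Lipschitz and $\Lambda_1, \Lambda_2$ are within Hausdorff distance $\delta$ (Lemma-Definition~\ref{lem:distance_axes}), the $\ell_1$-diameter of the $\ell_1$-projection of any set onto $\Lambda_1$ differs from the $\ell_2$-diameter of its $\ell_2$-projection onto $\Lambda_2$ by at most an additive constant depending only on $\delta$. It is therefore enough to bound the $\ell_2$-diameter of the projection of the grid on $\Lambda_2$.

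Arguing by contradiction, suppose there exists a grid $G = \varphi(I_l \times I_h) \subset X$ with interior disjoint from $\Lambda_1$ and whose projection on $\Lambda_2$ has arbitrarily large $\ell_2$-diameter. One can then pick two vertices $x,y \in G$ whose projections on $\Lambda_2$ are more than $C_2$ apart. Since $G$ is isometric to a Euclidean rectangle, hence CAT(0)-convex in $X$, the CAT(0) geodesic $[x,y]$ lies entirely in $G$; by the defining property of the tubular constant (Lemma-Definition~\ref{def:tubular}), this geodesic meets the $C_2$-neighbourhood of $\Lambda_2$. Consequently $G$ contains a point at $\ell_2$-distance at most $C_2 + \delta$ from $\Lambda_1$.

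The key geometric step combines the hypothesis that the interior of $G$ is disjoint from $\Lambda_1$ with the existence of such a close point. Inside the Euclidean rectangle $G$, the axis $\Lambda_1$ is a combinatorial $\ell_1$-geodesic, so $\Lambda_1 \cap G$ is a connected $\ell_1$-monotone arc, and the interior hypothesis forces this arc to lie in the topological boundary of $G$, and hence in at most two adjacent sides of the rectangle. A planar analysis then shows that if the projection of $G$ on $\Lambda_2$ is long, then $G$ must possess a long boundary side lying at bounded $\ell_2$-distance from $\Lambda_2$; the opposite parallel side is then a combinatorial geodesic of the same length at bounded $\ell_2$-distance from $\Lambda_2$, and applying the tubular property to its two endpoints, in the spirit of the proof of Lemma-Definition~\ref{lem:distance_axes}, yields the sought contradiction.

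The main obstacle is the ``tangential'' case where a long boundary side of $G$ lies on or very near $\Lambda_1$: a direct strong-contraction bound applied to $G$ itself fails, since $G$ may contain points arbitrarily close to $\Lambda_2$. The resolution is to shift attention to the opposite boundary side of $G$, which inherits from the Euclidean structure of the grid the property of being a long combinatorial geodesic at bounded $\ell_2$-distance from $\Lambda_2$, and then invoke strong contraction (via the tubular constant $C_2$) to rule out such a configuration.
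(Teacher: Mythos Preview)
The paper does not actually prove this statement: it marks it as an ``immediate corollary'' of Lemma-Definition~\ref{lem:distance_axes} (and implicitly of the strong contraction of $\Lambda_2$) with a bare \qed. So there is no argument to compare against, and your overall strategy—transfer the problem to $\Lambda_2$ via the Hausdorff bound and then exploit strong contraction—is certainly the intended one. That said, two of your steps do not go through as written.

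\medskip
\textbf{Convexity of grids.} You assert that since $G$ is isometric to a Euclidean rectangle it is CAT(0)-convex in $X$, so the $\ell_2$-geodesic $[x,y]$ lies in $G$. This inference is not valid: the embedding of the grid is an $\ell_1$-isometric embedding (Remark after Definition~\ref{def:grid}), and that does not force $\ell_2$-convexity. Concretely, at a non-corner boundary vertex $v$ of $G$ the link of $G$ in $X$ is a path of length~$2$, and nothing prevents $X$ from containing a square on the two boundary edges of $G$ at $v$; in that case $\mathrm{lk}_G(v)$ is not full in $\mathrm{lk}_X(v)$ and local convexity fails. One repair is to replace the pair $x,y$ by two \emph{opposite corners} $a,c$ of a sub-grid $G'\subset G$ whose projections are far apart, and work instead inside the combinatorial interval $\mathrm{Int}_X(a,c)$, which contains $G'$, is CAT(0)-convex (Corollary~\ref{cor:intervals_CAT0convex}), and embeds in~$\bbR^2$ (Lemma~\ref{lem:intervals_embed}); the CAT(0) geodesic $[a,c]$ is then the diagonal of $G'$ under this embedding.

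\medskip
\textbf{The tangential case.} Your proposed resolution—pass to the opposite long side of $G$, observe it is a long combinatorial geodesic at bounded distance from $\Lambda_2$, and invoke the tubular constant—does not yield a contradiction. A long geodesic at bounded distance from $\Lambda_2$ is perfectly compatible with strong contraction; $\Lambda_1$ itself is such a geodesic. Moreover, ``bounded distance'' here means distance at most $h+\delta$, where $h$ is the width of the grid, and nothing you have said bounds $h$. What strong contraction actually forbids is a \emph{ball} disjoint from $\Lambda_2$ with large projection. The way to exploit this is to note that once one side of the grid is forced near $\Lambda_1$, a large square sub-grid produces a large $\ell_2$-ball (centred deep in its interior) which is disjoint from $\Lambda_2$ but, via the grid's Euclidean structure, still projects onto $\Lambda_2$ with diameter comparable to its radius; this is what directly contradicts the strong contraction bound. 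Your sketch gestures ``in the spirit of the proof of Lemma-Definition~\ref{lem:distance_axes}'' but never isolates the disjoint ball that makes the contraction hypothesis bite.
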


\begin{lem}\label{lem:distance_translates}
 Let $\gamma_1$ be a geodesic segment contained in $\Lambda_1$ and let $h$ be a group element that moves the endpoints of $\gamma_1$ by a distance of at most $r$ (for the $\ell_1$-metric). Let $\gamma_1'$ be a sub-segment of $\gamma_1$. Then $\gamma_1'$ and $h\gamma_1'$ are at Hausdorff distance at most $2r + 8\delta$ (for the $\ell_1$-metric).
\end{lem}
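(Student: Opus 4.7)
The plan is to use the CAT(0) axis $\Lambda_2$, which is at Hausdorff distance at most $\delta$ from $\Lambda_1$ in both metrics by Lemma-Definition \ref{lem:distance_axes}, as an intermediate reference line, and to combine the $1$-Lipschitz character of the CAT(0) closest-point projection $\pi : X \to \Lambda_2$ with the strongly contracting property of $\Lambda_2$. The first step is to record the endpoint estimates: since $h$ is a combinatorial (hence CAT(0)) isometry, $d_2(x,hx)$ and $d_2(y,hy)$ are at most $r$, so the $1$-Lipschitz projection yields $d_2(\pi(x), \pi(hx)) \leq r$ and $d_2(\pi(y), \pi(hy)) \leq r$, and the Hausdorff bound places both $hx$ and $hy$ in the $\ell_2$-neighbourhood $N_{r+\delta}^{\ell_2}(\Lambda_2)$.

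Next, I parameterize both $\gamma_1$ and $h\gamma_1$ by $\ell_1$-arclength and, given $z = \gamma_1(s) \in \gamma_1'$, take as a candidate the point $w = h\gamma_1(s) \in h\gamma_1'$ at the same parameter $s$. The sought bound on $d_1(z, w)$ is then obtained via the triangle inequality
\[
d_1(z, w) \leq d_1(z, z^*) + d_1(z^*, w^*) + d_1(w^*, w),
\]
where $z^*, w^* \in \Lambda_2$ are the $\ell_1$-closest points of $\Lambda_2$ to $z$ and $w$ respectively. The first and third summands are bounded by $\delta$ using the Hausdorff closeness of $\Lambda_1$ and $\Lambda_2$ (respectively, of $h\Lambda_1$ and $\Lambda_2$, once we know $w$ is close to $\Lambda_2$), while the middle one is controlled via the projection estimates at the endpoints together with the observation that, along the convex CAT(0) line $\Lambda_2$, $\ell_1$- and $\ell_2$-distances are closely related. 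The reverse direction of the Hausdorff bound follows by applying the same argument to $h^{-1}$, which moves the endpoints $hx, hy$ of $h\gamma_1$ by at most $r$.

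The main obstacle is to establish that the interior point $w = h\gamma_1(s)$ is itself $\ell_1$-close to $\Lambda_2$; this is automatic at the endpoints $hx, hy$ by the triangle inequality, but in the interior it requires preventing the combinatorial geodesic $h\gamma_1 \subset h\Lambda_1$ from wandering off (note that $h$ need not send $\Lambda_2$ close to $\Lambda_2$). Here I would use the strongly contracting character of $\Lambda_2$, via the tubular constant of Lemma-Definition \ref{def:tubular}: any combinatorial geodesic whose endpoints lie in a bounded $\ell_2$-neighbourhood of $\Lambda_2$ must stay in a controlled $\ell_1$-neighbourhood of $\Lambda_2$, comparable in size to $\delta$. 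Carefully tracking the constants — a contribution of $r$ at each of the two endpoints and a contribution of $2\delta$ for each of the four steps crossing the Hausdorff gap between $\Lambda_1$ (or its translate under $h$) and $\Lambda_2$ — yields the bound $2r + 8\delta$.
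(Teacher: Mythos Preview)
Your strategy differs from the paper's in one crucial respect, and that difference creates a real gap. You project \emph{both} $\gamma_1'$ and $h\gamma_1'$ onto $\Lambda_2$ and then try to compare the two projections; to make the third summand $d_1(w,w^*)$ small you need $h\gamma_1'$ itself to lie close to $\Lambda_2$, and you appeal to the tubular constant for this. But Definition--Lemma~\ref{def:tubular} only says that an $\ell_2$-geodesic with suitably separated projections \emph{meets} the $C_2$-neighbourhood of $\Lambda_2$; it does not say the geodesic \emph{stays} there, it says nothing about combinatorial geodesics, and even a full Morse-type argument would produce a constant depending on the contraction data rather than on $\delta$. The claim that this step contributes ``$2\delta$'' is therefore unjustified, and so is the final bookkeeping that is supposed to yield $2r+8\delta$. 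A second gap is the middle term $d_1(z^*,w^*)$: knowing that the projections agree to within $r$ at the endpoints of $\gamma_1$ does not, on its own, control $d_2\big(\pi(\gamma_1(s)),\pi(h\gamma_1(s))\big)$ at interior parameters, since neither projected path is a constant-speed geodesic on $\Lambda_2$.

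The paper sidesteps both problems with a much simpler device: instead of projecting $h\gamma_1'$ onto $\Lambda_2$, it uses the $h$-translate $h\gamma_2'$ of the projection $\gamma_2' \subset \Lambda_2$ of $\gamma_1'$. Because $h$ is an isometry, $d_1(h\gamma_1',h\gamma_2') = d_1(\gamma_1',\gamma_2') \leq 2d_2(\gamma_1',\gamma_2') \leq 2\delta$ automatically, with no need to locate $h\Lambda_1$ relative to $\Lambda_2$. Now $\gamma_2$ and $h\gamma_2$ are genuine CAT(0) geodesic segments of the same length whose endpoints are at most $r+2\delta$ apart in the $\ell_2$-metric, so convexity of the CAT(0) distance gives $d_2(\gamma_2',h\gamma_2') \leq r+2\delta$ directly. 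The triangle inequality together with $d_1 \leq 2d_2$ then yields $4\delta + 2(r+2\delta) = 2r+8\delta$. The strongly contracting property is never used.
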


\begin{proof}
 By lemma \ref{lem:distance_axes}, $\Lambda_1$ and $\Lambda_2$ are at Hausdorff distance at most $\delta$ for the $\ell_1$-metric and for the $\ell_2$-metric. Let $\gamma_2, \gamma_2'$ be the $\ell_2$-projections,  of $\gamma_1$, $\gamma_1'$ on the convex subset $\Lambda_2$. The subsets $\gamma_1$ and $\gamma_2$ ($\gamma_1'$ and $\gamma_2'$ respectively) are at Hausdorff distance at most $\delta$ for the $\ell_2$-metric by construction. We thus have:
  \begin{align*} 
 d_1(\gamma_1',h\gamma_1') &\leq  d_1(\gamma_1', \gamma_2') +  d_1(\gamma_2',h\gamma_2') +  d_1(h\gamma_2', h\gamma_1')  ,\\
  &\leq 4d_2(\gamma_1', \gamma_2') +  d_1(\gamma_2',h\gamma_2'),\\
  &\leq 4\delta + 2 d_2(\gamma_2',h\gamma_2')\\
   &\leq 4\delta + 2(r + 2\delta) \leq 2r + 8\delta,
 \end{align*}
 the last inequalities following from the convexity of the CAT(0) metric, since $h$  moves the endpoints of $\gamma_1$ by  at most $r$.
\end{proof}

\subsection{Staircases}

 \begin{definition}
  A Euclidean CAT(0) square complex $D=D(P_-, P_+)$ between geodesic segments $P_-$ and $P_+$ is called a \textit{staircase} if $P_-$ contains at least one corner of $D$ in its interior.
 \end{definition}
 
 \begin{lem}\label{lem:staircase}
  Let $D=D(P_-, P_+)$ be a  Euclidean CAT(0) square complex of width at most $r$ between geodesic segments $P_-$ and $P_+$ of $D$, and $\varphi_i:D \ra X$ a family of combinatorial maps that all coincide on $P_-$. Let $k \geq 1$. If $P_-$ contains at least $2r + 2k$ corners, then the $\varphi_i$ coincide on a sub-geodesic of $P_+$ of length at least $k$.
 \end{lem}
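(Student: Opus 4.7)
The strategy is to propagate the common value of the $\varphi_i$ outward from $P_-$, exploiting the fact that a square in a CAT(0) square complex is uniquely determined by two consecutive edges. First, I would embed $D$ isometrically into $\bbZ^2$ with its standard tiling via Proposition \ref{prop:embedding_Euclidean}, orienting things so that $P_-$ is a monotone lattice path (only $R$ and $U$ steps) with $D$ lying on a fixed side. Let $A$ denote the set of edges of $D$ on which all $\varphi_i$ coincide; by hypothesis $A$ contains every edge of $P_-$, and $A$ is closed under the rule: if two consecutive edges of a square $S$ of $D$ lie in $A$, then all four edges of $S$ lie in $A$. At every convex corner $v$ of $P_-$ (curvature $+\pi/2$, equivalently $n_v = 1$), the two edges of $P_-$ incident to $v$ are two consecutive edges of the unique square of $D$ at $v$, so this square enters the ``base'' of the propagation.

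Next, I would propagate further by induction on depth away from $P_-$. The key local step is: if two already-determined squares $T, T'$ of $D$ meet at an interior vertex $w$ at diagonally opposite corners, then each square of $D$ at the two remaining sectors at $w$ has two consecutive edges in $A$ (one from $T$ and one from $T'$), hence is also determined. Because $D$ is Euclidean (no internal vertex of negative curvature and no pair of consecutive concave boundary corners), this propagation proceeds unobstructed in the interior, producing a ``cone'' of determined squares whose base consists of one square per convex corner of $P_-$ (with additional sideways filling along the straight runs of $P_-$ between corners) and whose range narrows by a controlled amount at each depth step.

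To conclude, I would count. Since the corners of $P_-$ alternate between convex and concave along the monotone path, at least $r + k$ of the $\geq 2r + 2k$ corners of $P_-$ are convex. After propagating through depth $r$---enough to cross the width of $D$ and reach $P_+$---and accounting for the cone's narrowing together with boundary losses near the gates $P_u$ and $P_v$, at least $k$ determined squares remain adjacent to $P_+$, contributing $\geq k$ consecutive edges of $P_+$ to $A$. The ``$2r$'' buffer in the hypothesis is precisely what is needed to absorb the cone's narrowing over depth $r$. The hardest part will be making this counting fully rigorous, in particular verifying that the determined squares at depth $r$ project to a \emph{consecutive} sub-geodesic of $P_+$ rather than a scattered set of edges; this requires a careful case analysis of the runs of $R$- and $U$-steps comprising $P_-$ and their interaction with $P_+$.
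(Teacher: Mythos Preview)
Your approach shares the essential ideas with the paper's proof: embed $D$ isometrically in $\bbZ^2$ via Proposition~\ref{prop:embedding_Euclidean}, then propagate the common value of the $\varphi_i$ from $P_-$ using the rule that two consecutive edges of a square determine that square's image in $X$. Where you differ is in organisation, and this is where your argument remains incomplete.

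You anchor the propagation at the \emph{convex} corners (curvature $+\tfrac{\pi}{2}$), then try to push outward via a ``cone'' whose narrowing you must track through depth $r$; you yourself flag the resulting case analysis as the hardest and unfinished part. The paper instead anchors at the two extreme \emph{concave} corners $v,v'$ (curvature $-\tfrac{\pi}{2}$) and sets $K:=D\cap\mathrm{Int}_{\bbR^2}(v,v')$. Ordering the squares of $K$ from the bottom row upward and right-to-left within each row, one checks that each new square $C_{n+1}$ has its bottom and right edges already in $P_-\cup\big(\bigcup_{i\le n}C_i\big)$, so a single clean induction shows all $\varphi_i$ agree on the whole of $K$ --- no diagonal step, no cone bookkeeping. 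The final count is then immediate: pick $w\in P_-$ with at least $r+k$ corners between $w$ and each of $v,v'$ (possible since $P_-$ has $\ge 2r+2k$ corners), and observe that $K\supset D\cap B_{\bbR^2}(w,r+k)$. Since $P_+$ meets the $r$-ball about $w$ by the width hypothesis, $K$ contains a sub-segment of $P_+$ of length at least $k$.

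In short: your propagation rule and overall strategy are correct, but switching from convex to concave anchors and replacing the cone by the rectangular region $K$ converts your anticipated ``careful case analysis'' into a one-line ball-containment, which is exactly what the paper does.
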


\begin{proof}
 Recall that $D$ embeds in $\bbR^2$ with its standard square tiling by Lemma \ref{lem:intervals_embed}. Thus, we will assume that $D$ is a subcomplex of $\bbR^2$.
 
 Up to an isometry of $\bbR^2$, we can assume that the following holds: For every horizontal (respectively vertical) edge of $P_-$, the unique square above it (respectively to its left) is in $D$ (see Figure \ref{fig_staircase}).
 
 Let $v$ and $v'$ be the first and last corners of $P_-$ of curvature $-\frac{\pi}{2}$. We will prove that the $\varphi_i$ coincide on 
 $$K:= D \cap \mbox{Int}_{\bbR^2}(v,v'),$$
 where $\mbox{Int}_{\bbR^2}(v,v')$ is the combinatorial interval between $v$ and $v'$.
 We order the squares of $K$ into a sequence $C_1, C_2 \ldots$ by starting from the downmost horizontal hyperplane of $K$ and reading from right to left; Once a horizontal hyperplane has been exhausted, move to the one above it and apply the same procedure.\\
 
\begin{figure}[H]
\begin{center}
 \scalebox{1}{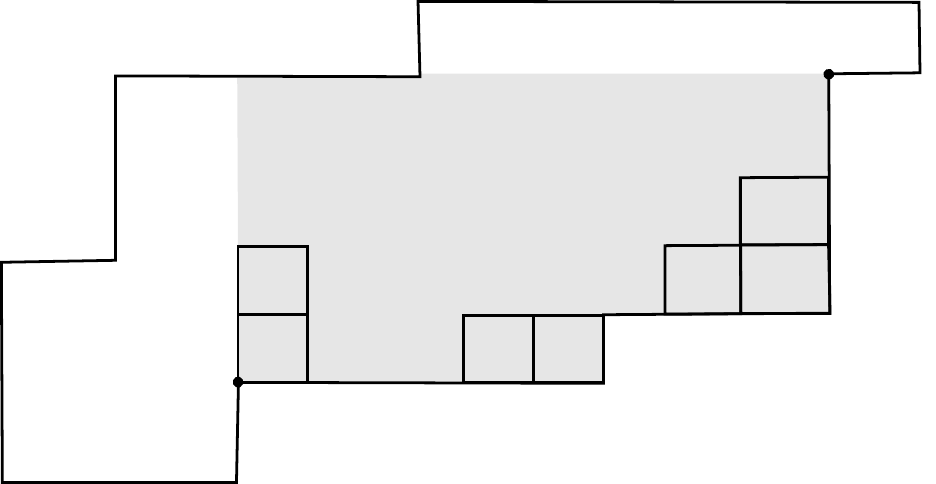}
\caption{Ordering the squares of $K$. The subcomplex $K$ is the shaded region.}
\label{fig_staircase}
\end{center}
\end{figure}
 
 We now show by induction that the $\varphi_i$ coincide on $K_n:= \cup_{1\leq i \leq n}C_i$. This holds for $n=1$. Indeed, two edges of $C_1$ are in $P_-$. As the $\varphi_i$ coincide on $P_-$, the CAT(0) condition implies that they also coincide on $C_1$. Suppose that the $\varphi_i$ coincide on $K_n$ for $n\geq 1$. Then the right and bottom edges of $C_{n+1}$ belong to $P_- \cup K_n$, and the same reasoning thus implies that the $\varphi_i$ coincide on $C_{n+1}$.

 Since $P_-$ contains at least $2r + 2k$ corners, we can choose a vertex $w$ in $P_-$ such that there exist at least $r+k$ corners between $w$ and $v$, and between $w$ and and $v'$. It thus follows that 
 $$K \supset D \cap B_{\bbR^2}(w,r + k),$$
 where $B_{\bbR^2}(\cdot)$ denotes the ball in $\bbR^2$ for the combinatorial distance. As the geodesic $P_+$ meets the $r$-neighbourhood of $w$ by assumption, it follows that $K$ contains a sub-geodesic of $P_+$ of length at least $k$, and the $\varphi_i$ thus coincide on a sub-geodesic of $P_+$ of length at least $k$.
\end{proof}

\subsection{Corridors over the axis $\Lambda_1$}
\label{sec:corridor}

\begin{definition}[Corridor]\label{def:almost_flat}
Let $D=D(\gamma_-, \gamma_+)$ be 
a Euclidean quadrangle. We say that $D$ is a \textit{corridor} if every vertex in the interior of $P_-$ and $P_+$ has zero curvature in $D$.
\end{definition}

\begin{rmk}
 The WPD condition for $g$ is equivalent to requiring that there exist constants $L$ and $N$ such that there do not exist $N$ distinct group elements that fix pointwise two points of $\Lambda_1$ at distance at least $L$. We thus choose two such constants $L, N \geq 0$ for the remaining of this section.
\end{rmk}

The aim of this section is to prove the following intermediate result: 

\begin{prop}\label{lem:corridor}
 For every $r>0$, there exist constants $L_0(r), N_0(r)$ such that the following holds: For every $x, y$ in $\Lambda_1$, for every sub-segment $\gamma$ of $\Lambda_1$ contained in the sub-segment of $\Lambda_1$ between $x$ and $y$, for every sub-segment $\gamma_- \subset \gamma$ of length at least $L_0(r)$, there exist at most $N_0(r)$ elements $h$ of $G$ such that $d(x,hx), d(y,hy)<r$, and such that  there exist a quadrangle between $\gamma$ and $h \gamma$ 
together with a sub-quadrangle between $\gamma_-$ and a sub-segment $\gamma_+ \subset h\gamma$ which is a corridor of width at most $4r+4C_\Box + 16\delta$.
\end{prop}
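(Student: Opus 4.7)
The plan is to bound $N_0(r)$ by pigeonholing onto the weak WPD constant $N$ of $g$, which guarantees that at most $N$ group elements pointwise fix two points of $\Lambda_1$ at distance at least $L$. The target estimate will take the form $N_0(r):=M(r)\cdot N$ with $M(r)$ a combinatorial count depending only on $r$.

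Fix a candidate $h$ and its corridor $\varphi_h:D_h\to X$. A corridor is in particular a Euclidean quadrangle, so Proposition \ref{prop:embedding_Euclidean} identifies $D_h$ with a planar subcomplex of $\bbR^2$, and Proposition \ref{lem:isometric_embedding} makes $\varphi_h$ itself an isometric embedding. The defining zero-curvature condition along the interiors of $P_-$ and $P_+$ forces both to be straight parallel segments of $\bbR^2$, so $D_h$ sits as a rectangular region of height at most $W:=4r+4C_\Box+16\delta$ between them, with any staircase carving confined to the two gates, each of $\ell_1$-length at most $2W$. Excising the gates yields a central subcomplex $D_h'\cong I_{m'}\times I_{n_h}$ with $n_h\leq W$ and $m'\geq|\gamma_-|-2W$, and Proposition \ref{lem:factorisation_grid} factorises $\varphi_h|_{D_h'}$ through an isometric embedding $I_{m'}\times T_h\hookrightarrow X$, where $T_h$ is a combinatorial tree of height $n_h$. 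Writing $\gamma_-'\subset\gamma_-$ for the central sub-segment of length $m'$ and $\tau_h\subset h\gamma$ for the corresponding parallel top sub-segment, this gives a canonical orientation-preserving isometry $\gamma_-'\xrightarrow{\sim}\tau_h$.

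I then associate to each $h$ the \emph{fingerprint}
\[ \mathrm{fp}(h) := \bigl(\tau_h,\ h^{-1}\tau_h\bigr), \]
an oriented sub-segment of $X$ together with its $h^{-1}$-preimage (which is an oriented sub-segment of $\gamma\subset\Lambda_1$). If $\mathrm{fp}(h_1)=\mathrm{fp}(h_2)=:(A,B)$, then $h_1|_B$ and $h_2|_B$ are both orientation-preserving isometries from the same oriented $B$ onto the same oriented $A$, hence coincide; thus $h_2^{-1}h_1$ pointwise fixes $B\subset\Lambda_1$. Taking $L_0(r)$ large enough that $|B|=m'\geq L$, the weak WPD condition bounds the number of elements sharing any given fingerprint by $N$.

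It remains to count fingerprints. The second coordinate is easy: by Lemma \ref{lem:distance_translates}, $h^{-1}\tau_h$ lies within Hausdorff distance $O(r+W)$ of $\gamma_-'$ on $\Lambda_1$, giving only $O(r)$ possible values. \textbf{The main obstacle will be to bound the number of possible first coordinates $\tau_h$}, since these live in the potentially non-locally-finite space $X$ rather than on $\Lambda_1$. To control them, I will use that $\tau_h\subset h\Lambda_1$ and that the corridor explicitly witnesses $h\Lambda_1$ fellow-travelling $\Lambda_1$ within distance $W$ over a segment of length $m'$; Corollary \ref{def:C_Box} together with the strong contraction of $\Lambda_2$ (Lemma \ref{def:tubular}) and the $\delta$-closeness of $\Lambda_1$ and $\Lambda_2$ (Lemma \ref{lem:distance_axes}) then forces $\tau_h$ to lie in one of $O(r)$ $\langle g\rangle$-translates of a controlled fundamental configuration. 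The resulting bound $M(r)=O(r^2)$, multiplied with $N$, yields the required $N_0(r)$.
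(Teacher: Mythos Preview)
Your fingerprint framework is sound up to the point you yourself flag as ``the main obstacle'', and the bound on the second coordinate $h^{-1}\tau_h\subset\Lambda_1$ via Lemma \ref{lem:distance_translates} is fine. The gap is precisely where you locate it, and your proposed resolution does not close it.

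The segment $\tau_h$ is the top side of a grid of height $n_h\leq W$ isometrically embedded in $X$ with bottom side on $\gamma_-'\subset\Lambda_1$. In a non-locally-finite CAT(0) square complex there can be \emph{infinitely many} such grids even of height $1$: a single edge of $\Lambda_1$ may lie in infinitely many squares. Corollary \ref{def:C_Box} bounds only the \emph{projection} onto $\Lambda_1$ of a grid whose interior misses $\Lambda_1$; it says nothing about the number of grids sitting over a fixed base segment. Strong contraction of $\Lambda_2$ (Lemma \ref{def:tubular}) does not help either: $\tau_h$ already lies in the $W$-neighbourhood of $\Lambda_1$, and that neighbourhood need not be locally finite. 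Your phrase ``$\langle g\rangle$-translates of a controlled fundamental configuration'' presupposes exactly the cocompactness of the $\langle g\rangle$-action on a neighbourhood of $\Lambda_1$ that is absent here.

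The paper supplies the missing idea by \emph{piling up} copies of the corridor. Concatenating the diagrams $h^i\varphi_h$ for $0\leq i<n$ produces a tall grid $I_k\times I_{nl}$ mapped to $X$, and Proposition \ref{lem:factorisation_grid} factors this through $I_k\times T$ with $T$ a tree. If consecutive vertical segments backtracked on fewer than $l/2$ edges, the image would contain, for large $n$, an embedded grid of side $>C_\Box$ whose projection on $\Lambda_1$ has diameter $>C_\Box$, contradicting Corollary \ref{def:C_Box}. The forced backtracking shows that the \emph{mid-level} horizontal line $\varphi_h(\ell_{l/2}'(h))$ is nearly $h$-invariant, and a second piling-up argument (with $hg$ in place of $h$) shows it lies on a combinatorial axis of $g$. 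Piling up alternately for two elements $h,h'$ of the same width then shows their mid-level lines are Hausdorff-close inside a common segment. Only at this point does a pigeonhole succeed: first on the width $l$ (at most $W+1$ values), then on the position within that common segment, giving $N_0(r)=(W+1)(12r+8C_\Box+48\delta+1)N$.

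In short, the piece you are missing is not a sharper static appeal to $C_\Box$ from a single corridor, but the dynamical observation that iterating $h$ forces the transverse direction of the corridor to fold back onto an axis of $g$, replacing the uncontrollable top segment $\tau_h\subset X$ by a controllable mid-level segment lying on an axis.
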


The reason for the constant $4r+4C_\Box + 16\delta$ will become apparent in the next section. A key tool in controlling  such quadrangles is the following operations of `concatenating' and `piling up' disc diagrams.

\begin{definition}[concatenation of disc diagrams]\label{def:concatenation_diagram}
 Let $\varphi:D \ra X$ ( $\varphi':D' \ra X$ respectively) be a quadrangle between two combinatorial geodesic $\gamma_-$ and $\gamma_+$ ($\gamma_-'$ and $\gamma_+'$ respectively). Assume that $\gamma_+$ and $\gamma_-'$ intersect along a non-empty geodesic sub-segment. 
 
The two disc diagrams $\varphi:D \ra X$ and $\varphi':D' \ra X$ can be \textit{concatenated} into a disc diagram 
 $\Phi: D \cdot D' \ra X$
where $D\cdot D'$ is the planar contractible square complex obtained from the disjoint union of $D$ and $D'$ by identifying a point $x$ in the upper side of $D$ with a point $x'$ in the lower side of $D'$ precisely when $\varphi(x) = \varphi'(x')$. 
\end{definition}

\begin{definition}[piling up of disc diagrams]
Let $\gamma$ be a combinatorial geodesic of $X$.
Let $(h_i)$ a sequence of group elements and denote by $\varphi_{h_i}: D(h_i) \ra X$ a quadrangle between  $\gamma$ and $h_i\gamma$.
For each integer $i \in \bbZ$, the map 
$ h_1\dots h_i \varphi_{h_{i+1}} $ defines a disc diagram between  $h_1\ldots h_i\gamma$ and $h_1\ldots h_{i+1}\gamma$. Thus, for each $n$ we can successively concatenate the disc diagrams $ h_1\dots h_i \varphi_{h_{i+1}} $ into a disc diagram between $\gamma$ and $h_1 \cdots h_n \gamma$. 
We say that this disc diagram is obtained by \textit{piling up} the sequence of disc diagrams $(\varphi_{h_i}:D(h_i)\ra X)_{1 \leq i \leq n}$. 
\end{definition}

We now start the proof of Proposition \ref{lem:corridor}, which in splits in several steps. Throughout this proof, whenever $x, y$ are vertices of $\Lambda_1$, $\gamma \subset \Lambda_1$ is a sub-segment, and $\gamma_- \subset \gamma$ is a sub-segment, we will say that an element $g$ of $G$ satisfies the \textit{property} $(P_r)$ \textit{with respect to} $x,y, \gamma, \gamma_-$  if $d(x,gx), d(y,gy)<r$, and there exists a quadrangle $D(g)$ between $\gamma$ and $g\gamma$ and a sub-quadrangle $D'(g)$ between $\gamma_- $ and a sub-segment $\gamma_+ \subset g\gamma$ which is a corridor. The proof is split in several steps.\\

\textbf{Step 1.} Let $x, y$ be vertices of $\Lambda_1$ at distance at least 
$2(4r + 4C_\Box+ 16\delta)$, $\gamma$ a sub-segment of $\Lambda_1$ between them, and $\gamma_- \subset \gamma$ a subsegment of length at least 
$2(4r + 4C_\Box+ 16\delta)$. Let $h$ be an element of $G$ satisfying the property $(P_r)$ with respect to $x,y, \gamma, \gamma_-$, and denote by $\varphi_h:D(h) \ra X$ an associated quadrangle and $D'(h)$ the associated sub-quadrangle which is a corridor. Let $k,n \geq 1$ be constants that will be chosen later. 

Let $\Phi_n:D^n(h) \ra X$ be the disc diagram between $\gamma$ and $h^n\gamma$ obtained by piling up $n$ copies of the disc diagram  $\varphi_h:D(h)\ra X$. 

By Lemma \ref{prop:embedding_Euclidean}, $D'(g)$ contains a sub-quadrangle which is a grid $G_1(g)$, of length $|\gamma_-|-2(4r + 4C_\Box+ 16\delta)$ and width  $l \leq 4r + 4C_\Box+ 16\delta$. We call this integer $l$ the \textit{width} of $g$. Moreover, by Lemma \ref{lem:distance_translates}, the upper side of $G_1(g)$ and the $h$-translate of the bottom side of $G_1(g)$ are at Hausdorff distance at most $(4r + 4C_\Box+ 16\delta) + (2r + 8\delta) = 6r + 4C_\Box + 24 \delta.$ If $|\gamma_-| > 2(4r + 4C_\Box+ 16\delta) + 2(i-1)(6r + 4C_\Box + 24 \delta)$ for some $i \geq 1$, then $D'(g)$ contains a sub-quadrangle which is a grid $G_i(g)$, of length $|\gamma_-|-(2(4r + 4C_\Box+ 16\delta) + 2(i-1)(6r + 4C_\Box + 24 \delta))$ and width $l \leq 4r + 4C_\Box+ 16\delta$, over the maximal sub-segment of $\gamma_-$ at distance at least $4r + 4C_\Box+ 16\delta + (i-1)(6r + 4C_\Box + 24 \delta)$ of the extremities of $\gamma_-$. 

As a consequence, if $|\gamma_-|> k+ 2(4r + 4C_\Box+ 16\delta) + 2(n-1)(6r + 4C_\Box + 24 \delta)$, then $D^n(g)$ contains a sub-quadrangle $G^n(g)$ between a sub-segment of $\gamma_-$ and a sub-segment of $g^n\gamma_-$, which is a grid $k \times nl$. 
Such a grid $G^n(g)$ is obtained as the concatenation of sub-grids $G_{n,i}(g)$, where each $G_{n,i}(g)$ is a sub-grid $k \times l$ of $G_{n-1-i}(g)$ between a sub-segment of $\gamma$ and a sub-segment of $g\gamma$. We also choose a vertical path $P_n(g)$ of length $nl$ in  $G^n(g)$, which we write as the concatenation of $n$ vertical paths $P_{n,i}(g)$ of length $l$, each $P_{n,i}(g)$ belonging to $ G_{n,i}(g)$ (see Figure \ref{fig_pilingup}).

\begin{figure}[H]
\begin{center}
 \scalebox{0.8}{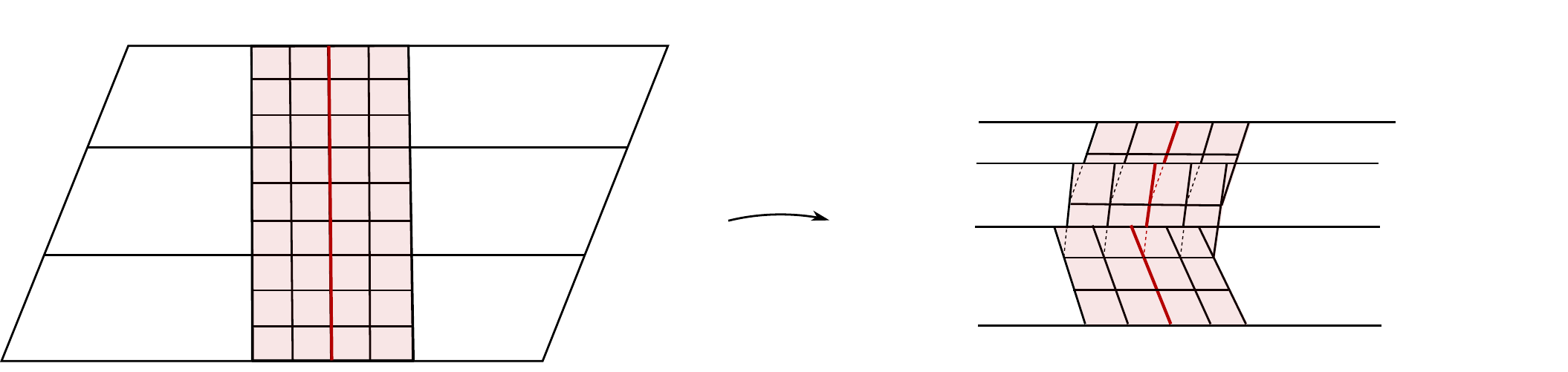}
\caption{Piling up $n=3$ copies of a sub-corridor of $D(g)$ of width $3$. Here, the (impossible) case where the paths $\Phi_n(P_{n,i}(g))$ and $\Phi_n(P_{n,i+1}(g))$ backtrack on exactly one edge.}
\label{fig_pilingup}
\end{center}
\end{figure}

By Proposition \ref{lem:factorisation_grid}, the restriction of $\Phi_n$ to $G^n(g)\cong I_k \times I_{nl}$ factorises as 
$$ I_k \times I_{nl} \overset{id \times \Psi_n}{\longrightarrow} I_k \times T_{nl} \overset{}{\hra X},$$

Note that for every $0 \leq i <n$, the paths $\Phi_n(P_{n,i}(g))$ and $\Phi_n(P_{n,i+1}(g))$ backtrack on the same number $l'$ of edges because of Proposition \ref{lem:factorisation_grid}. If we had $l' < \frac{l}{2}$, then the image  $\Phi_n(P_n)$ would contain an isometrically embedded path, and the length of such paths grows linearly with $n$. In particular for some constant $n_0 \geq 1$ that is independent of $x$ and $y$, this length would be bigger than the constant $C_{\Box}$ (Definition \ref{def:C_Box}). If $k$ is bigger than $C_{\Box}$, then it would follow that $X$ would contain an embedded square of side $C_{\Box}$ whose projection on $\gamma$ has diameter $C_{\Box}$, contradicting Lemma \ref{def:C_Box}. \\

\textbf{Step 2.} We thus start by  defining the following constant: 
$$L_0(r):= \big(\mbox{max}(L,C_{\Box}, |g|_1) + 36r+ 24C_\Box + 144\delta\big) + 2(4r+4C_\Box+ 16\delta) +2(n_0-1)(6r + 4C_\Box + 24 \delta + |g|_1),  $$
Let $x, y$ be vertices of $\Lambda_1$ at distance at least $L_0(r)$, $\gamma$ be the sub-segment of $\Lambda_1$  between them, and $\gamma_- \subset \gamma$ a subsegment of length at least $L_0(r)$. Let $h$ be a group element that satisfies the property $(P_r)$ with respect to $x,y, \gamma,$ and $ \gamma_-$. From the previous paragraph, and using the same notations, it follows that  $\Phi_n(P_{n,i}(h))$ and $\Phi_n(P_{n,i+1}(h))$ backtrack on at least $\frac{l}{2}$ edges. 
For $\alpha$ a real number between $0$ and the width of $h$, we denote by $\ell_{\alpha}(h)$ the horizontal line of $ G_1(h) $ at distance $\alpha$ from $P_-$, and by $\ell_\alpha'(h)$ the sub-segment $\ell_\alpha(h) \cap G_2(h)$.
Since $\Phi_n(P_{n,i}(h))$ and $\Phi_n(P_{n,i+1}(h))$ backtrack on at least $\frac{l}{2}$ edges for each $i$, it follows that 
$$h\varphi\big(\ell_{\frac{l}{2}}'(h)\big) \subset \varphi\big(\ell_{ \frac{l}{2} }(h)\big).$$
Thus, both $h\varphi\big(\ell_{\frac{l}{2}  }'(h)\big)$ and $\varphi\big(\ell_{ \frac{l}{2}  }'(h)\big)$ are contained in the segment $\varphi\big(\ell_{ \frac{l}{2}}(h)\big).$ 

We claim that $h\varphi\big(\ell_{\frac{l}{2}  }'(h)\big)$ and $\varphi\big(\ell_{ \frac{l}{2}  }'(h)\big)$ are at Hausdorff distance at most $6r + 4C_\Box + 24 \delta$. Indeed,  
this follows from the triangular inequality, as $\varphi(\ell_{ \frac{l}{2}  }'(h))$ is $(2r+6C_\Box)$-close to the segment $\varphi(\ell_0'(h))$ by the width assumption, the same holds for $h\ell_{ \frac{l}{2}  }'(h)$ and $h\ell_0'(h)$, and $\ell_0'(g)$ and $h\ell_0'(g)$ are at most $(2r+8\delta)$-close by Lemma \ref{lem:distance_translates}.

Thus,  $h\varphi\big(\ell_{\frac{l}{2}  }'(h)\big)$ and $\varphi\big(\ell_{ \frac{l}{2}  }'(h)\big)$ are contained in the segment $\varphi\big(\ell_{ \frac{l}{2}}(h)\big)$ and are at Hausdorff distance at most $6r + 4C_\Box + 24 \delta$ from one another. In particular, the two sub-segments share an edge, as they have a length of at least $L + 36r+ 24C_\Box + 144\delta$.\\

\textbf{Step 3.} Let us show that, for an element $h$ as in the previous step, the subset $\varphi\big(\ell_{ \frac{l}{2}  }'(h)\big)$ is contained in a combinatorial axis of $g$.

Note that $hg$ has translation length at most $|g|_1 + r$. 
By concatenating the disc diagrams $(hg)^i\varphi_h: D(h)\ra X$ for $i \geq 0$, the same reasoning as before shows that $hg\varphi_{h}(\ell_{l/2}'(h))$ and $\varphi_{h}(\ell_{l/2}'(h))$ are contained in a common segment and are at Hausdorff distance at most $|g|_1 + 6r + 4C_\Box + 24 \delta$. As we already now that $h\varphi_{h}(\ell_{l/2}'(h))$ and $\varphi_{h}(\ell_{l/2}'(h))$ are contained in a segment and are at Hausdorff distance at most $ 6r + 4C_\Box + 24 \delta$, it follows that $hg\varphi_{h}(\ell_{l/2}'(h))$ and $h\varphi_{h}(\ell_{l/2}'(h))$, and thus $g\varphi_{h}(\ell_{l/2}'(h))$ and $\varphi_{h}(\ell_{l/2}'(h))$, are contained in a segment and are at Hausdorff distance at most $|g|_1 +12r+8C_\Box+48\delta$. As the length of $\ell_{l/2}'(h)$ is strictly greater than $|g|_1 +12r+8C_\Box+48\delta$ by construction, it follows that these two segments share at least an edge. Moreover, they cannot coincide as $g$ is a hyperbolic isometry of $X$. Thus, the same argument as before shows that $\bigcup_{i\in \bbZ} g^i\varphi_{h}(\ell_{l/2}(h))$ is a combinatorial axis for $g$, and it contains $\varphi\big(\ell_{ \frac{l}{2}  }'(h)\big)$ by construction. \\

\textbf{Step 4.} Let $x, y$ be vertices of $\Lambda_1$, let $\gamma$ be the sub-segment of $\Lambda_1$ between them, and $\gamma_- \subset \gamma$ a sub-segment of length at least $L_0(r)$. Let us now consider two elements $h$ and $h'$ satisfying $(P_r)$ with respect to $x,y, \gamma$ and $ \gamma_-$ and having the same width $l \leq 4r+4C_\Box+16\delta$.  Denote by $\varphi_h: D(h) \ra X$ and $\varphi_{h'}: D(h') \ra X$ the associated disc diagrams. Consider elements $h_i$ of $G$, $i \geq 1$, such that $h_i=h$ if $i$ is odd and $h_i=h'$ otherwise. As before, we can construct the disc diagram $\Phi_n: D^n(h,h')\ra X$ obtained by piling up the disc diagrams $(\varphi_{h_i}:D(h_i)\ra X)_{1 \leq i \leq n}$.
As before, $D^n(h,h')$ contains a sub-diagram $G^n(h,h')$ between  a sub-segment of $\gamma_-$ and a sub-segment of $h_1\cdots  h_n\gamma_-$,    which is a grid $k \times nl$. Moreover, this map factorises by Proposition \ref{lem:factorisation_grid}, and we have to consider the image under $\Phi_n$ of a vertical path $P_n(h,h')$ of length $nl$, which is the concatenation of paths $P_{n,i}(h_i)\subset h_1\cdots h_iD(h_i)$.
Here again, note that for every $ i$, the paths $\Phi_n(P_{n,2i-2}(h_{2i-1}))$ and $\Phi_n(P_{n,2i-1}(h_{2i}))$ backtrack on the same number $l_{\mathrm{even}}$ of edges, while $\Phi_n(P_{2i-1}(h_{2i}))$ and $\Phi_n(P_{2i}(h_{2i+1}))$ backtrack on the same number $l_{\mathrm{odd}}$ of edges. Using the same reasoning, at least one of the two integers $l_{\mathrm{even}}$ and $l_{\mathrm{odd}}$ is at least $\frac{l}{2}$. Let us assume for instance that this holds for $l_{\mathrm{even}}$. As before, it then follows that 
$ \varphi_h(\ell_{ \frac{l}{2}  }'(h)) $ and $ (h')^{-1}\varphi_{h'}(\ell_{ \frac{l}{2} }'(h'))$ are at Hausdorff distance at most $6r + 4C_\Box + 24 \delta$. 
But as $ (h')^{-1}\varphi_{h'}(\ell_{ \frac{l}{2} }'(h'))$ and $ \varphi_{h'}(\ell_{ \frac{l}{2} }'(h'))$ are also at Hausdorff distance at most $6r + 4C_\Box + 24 \delta$ by Step 2, it follows that $ \varphi_h(\ell_{ \frac{l}{2}  }'(h)) $ and $ \varphi_{h'}(\ell_{ \frac{l}{2} }'(h'))$ are at Hausdorff distance at most $12r + 8 C_\Box + 48\delta$. \\

\textbf{Step 5.} We define the following constant:
$$N_0(r) := (4r+4C_\Box+16\delta+1)(12r + 8 C_\Box + 48\delta+1)N.$$
Let $x, y$ be vertices of $\Lambda_1$, $\gamma$ the sub-segment of $\Lambda_1$ between them, and $\gamma_- \subset \gamma$ a subsegment of length at least $L_0(r)$. Suppose by contradiction that there at least $N_0(r)$ distinct elements satisfying the property $(P_r)$ with respect to $x,y, \gamma$ and $ \gamma_-$. For each such element $h$ we consider an associated corridor $\varphi_h: D(h) \ra X$ and use the same notations as above. From Step 4, it follows that for every two such elements $h, h'$ with the same width $l$, the segments 
$ \varphi_h(\ell_{ \frac{l}{2}  }'(h))$ and $ \varphi_{h'}(\ell_{ \frac{l}{2} }'(h'))$ are at Hausdorff distance at most $12r + 8 C_\Box + 48\delta$ from one another, and are contained in a segment of $X$.
Choose  $(12r + 8 C_\Box + 48\delta+1)N$ such elements  of the same width. It follows that the intersection $\ell$ of all the segments $ \varphi_h(\ell_{ \frac{l}{2}  }'(h))$ has length at least $L + 12r + 8 C_\Box + 48\delta= (L + 36r+ 24C_\Box + 144\delta) - 2(12r + 8 C_\Box + 48\delta)$. Let $\ell'$ be the maximal sub-segment of $\ell$ at distance $6r+4C_\Box + 24\delta$ from the endpoints of $\ell$. Each of the $(12r + 24 C_\Box + 16\delta+1)N$ elements send $\ell'$ to a translate contained in $\ell$ by Step 2. Thus, we can choose at least $N$ distinct such elements whose action on $\ell'$  coincide. As $\ell'$ is of length at least $L$ and is contained in a combinatorial axis of $g$ by Step 3, we obtain a contradiction. This finishes the proof of Proposition \ref{lem:corridor}.

\subsection{Finishing the proof of Theorem E}
\label{sec:finish}

\begin{lem}\label{lem:width}
Let $\gamma$ be a segment of $\Lambda_1$  of length at least $ 2C_{\Box}$, and let $h$ be a group element that moves the endpoints of $\gamma$ by at most $r$. Suppose that there exists a reduced disc diagram $\varphi:D \ra X$ between $\gamma$  and $h\gamma$. Then $D$ has width at most $2r+4C_\Box $.
\end{lem}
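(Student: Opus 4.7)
The plan is to argue by contradiction, exploiting the defining property of the constant $C_\Box$ from Corollary \ref{def:C_Box}: every grid of $X$ whose interior is disjoint from $\Lambda_1$ projects onto $\Lambda_1$ with diameter strictly less than $C_\Box$. Assuming that the Hausdorff distance between $P_-$ and $P_+$ in $D$ exceeds $2r + 4C_\Box$, I will exhibit an isometrically embedded sub-grid of $D$ of length $C_\Box$ whose image in $X$ violates this property.

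First I set up the geometry of $D$. By Lemma \ref{lem:disc_CAT0}, $D$ is a CAT(0) square complex, and each of the four boundary paths $P_-, P_+, P_u, P_v$ is a combinatorial geodesic in $D$, since $\varphi$ is $1$-Lipschitz and restricts bijectively to a geodesic in $X$ on each of them. Because $h$ moves the endpoints of $\gamma$ by at most $r$, one has $|P_u|, |P_v| \leq r$; in particular each endpoint of $P_-$ lies within distance $r$ of $P_+$ in $D$ via its adjacent gate. Under the contradiction hypothesis, some $v_0 \in P_-$ satisfies $d_D(v_0, P_+) > 2r + 4C_\Box$, and by the $1$-Lipschitz property of $d_D(\cdot, P_+)$ along $P_-$, this vertex lies at combinatorial distance greater than $r + 4C_\Box$ from both endpoints of $P_-$. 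Hence the sub-interval $P'_- := \{v \in P_- : d_D(v, P_+) > r + 2C_\Box\}$ is strictly interior to $P_-$ and of length at least $2(r + 2C_\Box)$. Using $|\gamma| \geq 2C_\Box$, I can then select a sub-segment $\eta \subset P'_-$ of length exactly $C_\Box$ whose endpoints are strictly interior to $\gamma$.

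Next I extract a sub-grid above $\eta$. Since $d_D(v, P_+) > r + 2C_\Box$ for every $v \in \eta$ and the number of singularities of the quadrangle $D$ is at most four (Lemma \ref{lem:sigularities}), after sliding $\eta$ inside $P'_-$ if necessary I may arrange that the region of $D$ immediately above $\eta$ is Euclidean. A variant of the hyperplane-extraction argument of Proposition \ref{cor:embedding_euclidean} then yields an isometrically embedded sub-grid $G \cong I_{C_\Box} \times I_h$ of $D$ (with $h \geq 1$) whose bottom side is exactly $\eta$. Proposition \ref{lem:isometric_embedding} implies that $\varphi|_G$ is an isometric embedding, so $\varphi(G)$ is a convex grid of $X$ of length $C_\Box$ in the sense of Definition \ref{def:grid}, with bottom side $\eta \subset \gamma \subset \Lambda_1$.

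Finally I derive the contradiction. The projection of $\varphi(G)$ onto $\Lambda_1$ contains $\eta$ and hence has diameter at least $C_\Box$, so by Corollary \ref{def:C_Box} it suffices to verify that the interior of $\varphi(G)$ is disjoint from $\Lambda_1$. Since both $\varphi(G)$ and $\Lambda_1$ are convex in the CAT(0) space $X$, the intersection $\Lambda_1 \cap \varphi(G)$ is a convex (hence connected) sub-segment of $\Lambda_1$ containing $\eta$; at each endpoint of $\eta$, which is strictly interior to $\gamma$, the bi-infinite geodesic $\Lambda_1$ continues horizontally along $\gamma$ outside $\varphi(G)$, so $\Lambda_1 \cap \varphi(G) = \eta \subset \partial \varphi(G)$, as desired. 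The hardest step will be the grid extraction: one has to position $\eta$ inside $P'_-$ so that the strip of $D$ directly above it is free of the (at most four) singularities allowed by Lemma \ref{lem:sigularities}, while preserving both $|\eta| = C_\Box$ and the interior-of-$\gamma$ condition; the length bound $|P'_-| \geq 2(r + 2C_\Box)$ combined with the singularity count provides enough room for this sliding.
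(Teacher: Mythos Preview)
Your strategy is to extract from $D$ an embedded $C_\Box \times h$ grid sitting over a sub-segment $\eta \subset P_-$ and then contradict the defining property of $C_\Box$. The fatal gap is in the grid extraction. You invoke Lemma \ref{lem:sigularities} to bound the number of \emph{singularities} of $D$ by four and then claim that, after sliding $\eta$ inside $P'_-$, the strip of $D$ directly above $\eta$ can be made Euclidean. But for a grid to have bottom side exactly $\eta$, every interior vertex of $\eta$ must have zero curvature in $D$. Individual corners of curvature $\pm\frac{\pi}{2}$ on $P_-$ are \emph{not} singularities in the paper's sense: only boundary corners of curvature $\leq -\pi$, or \emph{pairs} of consecutive $-\frac{\pi}{2}$ corners, count. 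The interior of $P_-$ can therefore carry arbitrarily many alternating $+\frac{\pi}{2}$ and $-\frac{\pi}{2}$ corners while containing no singularity at all, and at a $+\frac{\pi}{2}$ corner there is a single square, so no grid row sits above that vertex. Your sliding argument, and your appeal to ``a variant of Proposition \ref{cor:embedding_euclidean}'' (which presupposes an \emph{almost Euclidean} quadrangle, something you have not established), do not overcome this. A secondary issue is the final step: $\Lambda_1$ is a combinatorial geodesic, not a CAT(0) geodesic, so the convexity you invoke is not available as stated; and even granting combinatorial convexity of $\varphi(G)$, the edge of $\gamma$ just past an endpoint of $\eta$ could be identified by $\varphi$ with an edge of $\varphi(G)$, so $\Lambda_1 \cap \varphi(G) = \eta$ is not yet justified.

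The paper's argument is quite different and sidesteps these issues. It considers the combinatorial interval in $D$ between the endpoints of $P_-$, observes that (on its non-degenerate pieces) $\varphi$ restricts to an isometric embedding by Proposition \ref{lem:isometric_embedding}, and deduces via $C_\Box$ that any two combinatorial geodesics in $X$ between the endpoints of $\gamma$ are at Hausdorff distance at most $2C_\Box$. Since combinatorial intervals are CAT(0)-convex (Corollary \ref{cor:intervals_CAT0convex}), the CAT(0) geodesics $P'_\pm$ between the endpoints of $P_\pm$ stay within $2C_\Box$ of $P_\pm$; convexity of the CAT(0) metric then gives $d(P'_-,P'_+)\leq r$, and the triangle inequality yields $2r+4C_\Box$.
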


\begin{proof}
Recall that $D$ is a CAT(0) square complex by Lemma \ref{lem:disc_CAT0}. Let $I$ be the combinatorial interval between the endpoints of the lower path of $D$, and consider the restriction  $\varphi_|: I \ra X$. If $\varphi_|:I \ra X$ is non-degenerate, then it is a Euclidean disc diagram, as $I$ isometrically embeds in $\bbR^2$ by Lemma \ref{lem:intervals_embed}. In such a case, $\varphi_|$ is an isometric embedding by Lemma \ref{lem:isometric_embedding}. If $\varphi_|:I \ra X$ is degenerate, then the restriction of $\varphi_|$ to each of the maximal subcomplexes of $I$ homeomorphic to a $2$-disc is an isometric embedding by the same argument. It follows from Proposition \ref{prop:embedding_Euclidean} and Lemma \ref{def:C_Box} that two combinatorial geodesics of $X$ between the endpoints of $\gamma$ are at Hausdorff distance at most  $2C_\Box$ from another. Thus, any two combinatorial geodesics of $D$ between the endpoints of the upper path of $D$ are at Hausdorff distance at most $2C_\Box$. 

Recall that combinatorial intervals are convex for the CAT(0) metric by Corollary \ref{cor:intervals_CAT0convex}. Thus,  the CAT(0) geodesic $P_+'$ between the endpoints of the upper  path $P_+$ of $D$ is at Hausdorff distance at most $2C_{\Box}$ from $P_+$. Analogously, the CAT(0) geodesic $P_-'$ between the endpoints of the lower path $P_-$ of $D$ is at Hausdorff distance at most $2C_{\Box}$ from $P_-$. But by convexity of the CAT(0) metric, $P_-'$ and $P_+'$ are at Hausdorff distance at most $r$ for the $\ell_2$-metric. Thus, $P_-$ and $P_+$ are at Hausdorff distance at most $2r + 4C_\Box$ for the $\ell_1$-metric.
\end{proof}

Recall that the interval between two vertices of a CAT(0) square complex embeds isometrically in $\bbR^2$ endowed with its square tiling (Lemma \ref{lem:intervals_embed}). We can thus define the following constant:

\begin{definition}
  For every $R \geq 0$, we choose a constant $N_{\mathrm{int}}(R)$ such that there exist at most $N_{\mathrm{int}}(R)$ combinatorial geodesics between two vertices of $X$ at distance at most $R$. 
\end{definition}

Further recall that a Euclidean quadrangle $D = D(\gamma_-, \gamma_+)$ isometrically embeds in $\bbZ^2$ by Lemma \ref{lem:intervals_embed}. We can thus define the following constant:

 \begin{definition}
 For every $R \geq 0$, we choose a constant $N_{\mathrm{quad}}(R,r)$ on the number of  Euclidean quadrangles $D = D(\gamma_-, \gamma_+)$ of width at most $4r+4C_\Box + 16\delta$ and such that $|\gamma_-| \leq R$,  up to isometries which preserve pointwise the upper lower sides of the quadrangles. 
 \end{definition}

We are now ready to prove Theorem E. We split the proof in several steps. The first $7$ steps deal with points on the axis $\Lambda_1$. Note that, for such points, the acylindricity condition amounts to proving that there exist constants  $L_{\mathrm{axis}}(r), N_{\mathrm{axis}}(r)$ such that for every two points $x,y$ of $\Lambda_1$ at least $L_{\mathrm{axis}}(r)$ apart, at most $N_{\mathrm{axis}}(r)$ group elements move $x$ and $y$ by at most $r$. 

Let $r >0$. We start by defining the following constants:
  \begin{align*} 
 L_1(r) &:= 2L_{\mathrm{emb}}(4r+4C_\Box + 16\delta) + (24r + 8 C_\Box + 78 \delta + 2L)(L_0(r)+2) ,\\
  L_2(r) &:= 5\cdot L_1(r),\\
  N_1(r) &:= (8r+32\delta+1)N_{\mathrm{quad}}(L_1(r))N,\\
  N_2(r) &:= \mbox{max}(N_0(r),N_1(r)),\\
  N_{3}(r)  &:= (8r+32\delta+1)L(r)^2N_{\mathrm{int}}(L(r))N,\\
  N_{4}(r)  &:= 5 \cdot 2^{24r + 8 C_\Box + 78 \delta + 2L}  N_2(r).
 \end{align*}
 Finally, we define the two constants: 
   \begin{align*} 
  L_{\mathrm{axis}}(r)&:= 3\cdot L_2(r),\\
  N_{\mathrm{axis}}(r) &:= 4  \mbox{max}(N_{\mathrm{3}}(r),N_{\mathrm{4}}(r)).
 \end{align*}

We now show that, for every vertices $x,y$ of $\Lambda_1$ at distance at least $L_{\mathrm{axis}}(r)$, there do not exist $N_{\mathrm{axis}}(r)$ distinct elements $h$ of $G$ such that $d(x,hx),d(y,hy) \leq r$. 

By contradiction, suppose that there exist vertices $x,y$ of $\Lambda_1$ at distance at least $L_{\mathrm{axis}}(r)$ and $N_{\mathrm{axis}}(r)$  distinct group elements $h$ of $G$ such that $d(x,hx),d(y,hy) \leq r$. For simplicity,  we arrange these elements as a finite sequence $(h_i)$. Choose a taut geodesic $\gamma$ between $x$ and $y$. \\

\textbf{Step 1.} Decompose $\gamma$ as the concatenation of geodesic segments
$$ \gamma := \alpha \cup \gamma_1 \cup \gamma_2 \cup \gamma_3 \cup \beta,$$
such that 
each of the segments $\gamma_i$ has length  $L_2(r)$. We sort the chosen group elements into four classes, depending on the nature of both $h_i\gamma \cap \gamma_1$ and $h_i\gamma \cap \gamma_3$: empty or non-empty. Since $ N(r) = 4  \mbox{max}(N_{3}(r),N_{4}(r))$, we can choose at least $\mbox{max}(N_{3}(r),N_{4}(r))$ such elements in the same class. 

Assume by contradiction that all the $h_i\gamma \cap \gamma_1$ and $g_i\gamma \cap \gamma_3$ are non-empty. Then since we are considering at least $L(r)^2 \cdot (8r+32\delta+1)N_{\mathrm{int}}(L(r))N$ group elements, we can choose at least $ (8r+32\delta+1)N_{\mathrm{int}}(L(r))N$ such elements such that the $h_i\gamma$ all contain two fixed vertices $v_1 \in \gamma_1$ and $v_3 \in \gamma_3$. Now the sub-segment of $h_i\gamma$ between $v_1$ and $v_3$ is in $\mbox{Int}_X(v_1,v_3)$, which contains at most $N_{\mathrm{int}}(L(r))$ by construction, so we can find at least $(8r+32\delta+1)N$ elements such that the $h_i\gamma$ all contain a chosen combinatorial geodesic $\gamma'$ between $v_1$ and $v_3$. It thus follows that the segments $h_i^{-1}\gamma'$ are contained in $\gamma$ and are at Hausdorff distance at most $4r+16\delta$ of one another by Lemma \ref{lem:distance_translates}. Since there are at least $(8r+32\delta+1)N$ such elements $h_i^{-1}$, it thus follows that the action of at least $N$ of them coincide on $\gamma'$, and thus $N$ distinct elements of $G$ fix pointwise $\gamma'$, which is of length at least $L_2(r) \geq L$, a contradiction.\\

\textbf{Step 2.} From the previous step, we can thus assume that, for each of the  given elements $h_i$, the segments $h_i \gamma$ and $\gamma_1$, and in particular  $h_i \gamma_1$ and $\gamma_1$, are disjoint.  Moreover, $h_i \gamma_1$ and $\gamma_1$ are at  Hausdorff distance at most $2r+8 \delta$ by Lemma \ref{lem:distance_translates}. 
Thus, we choose for each $i$ a  quadrangle $D(h_i)$, of width at most $4r+4C_\Box + 16\delta$ by Lemma \ref{lem:width}, between $\gamma_1$ and $h_i \gamma_1$.

Decompose $\gamma_1$ as the concatenation of geodesic segments
$$ \gamma_1 :=  \gamma_{1,1} \cup \ldots \cup \gamma_{1,5} ,$$
such that  each of the $5$ segments $\gamma_i$ has length $L_1(r)$.
We subdivide each $h_i \gamma_1$ into a concatenation 
$$ h_i \gamma_1 :=  \gamma_{1,1}^i \cup \ldots \cup \gamma_{1,5}^i,$$
such that one can subdivide the non-degenerate quadrangle $D(h_i)$ into a concatenation of $5$ non-degenerate quadrangles:
$$ D(h_i) := D_1(h_i) \cdots  D_{5}(h_i),$$
where each $D_k(h_i)$ is a non-degenerate sub-quadrangle between $\gamma_{1,k}$ and $\gamma_{1,k}^i$ which has width at most $4r+4C_\Box + 16\delta$. \\

\textbf{Step 3. } By Lemma \ref{lem:sigularities}, for each $i$ at least one of these non-degenerate sub-quadrangles is almost Euclidean. Now, since we have at least $N_{\mathrm{non-deg}}(r)= 5 \cdot 2^{24r + 8 C_\Box + 78 \delta + 2L}N_2(r)$ such elements, there exists an integer  $1 \leq k\leq 5$ such that, for at least $2^{24r + 8 C_\Box + 78 \delta + 2L}N_2(r)$ of these group elements, the sub-quadrangle $D_{k}(h_i)$ is almost Euclidean. \\

\textbf{Step 4.} Since $\gamma_{1,k}$ is of length  
$$ L_1(r) = 2L_{\mathrm{emb}}(4r+4C_\Box + 16\delta) + (24r + 8 C_\Box + 78 \delta + 2L)(L_0(r)+2),$$
we can choose a subsegment $\widetilde{\gamma}_{1,k}$ of $\gamma_{1,k}$, of length $(24r + 8 C_\Box + 78 \delta + 2L)(L_0(r)+2)$ and at distance $L_{\mathrm{emb}}(4r+4C_\Box + 16\delta)$ from the endpoints of $\gamma_{1,k}$. For each $i$, Proposition \ref{cor:embedding_euclidean} allows us to choose a sub-segment $\widetilde{\gamma}_{1,k}^i$ of $\gamma_{1,k}^i$ and a \textit{Euclidean} sub-quadrangle $\widetilde{D}_{k}(h_i)$ of $D_{k}(h_i)$ of width at most $4r+4C_\Box + 16\delta$ between  $\widetilde{\gamma}_{1,k}$ and $\widetilde{\gamma}_{1,k}^i$.\\

\textbf{Step 5.} Now write $\widetilde{\gamma}_{1,k}$ as the concatenation of  $24r + 8 C_\Box + 78 \delta + 2L$ sub-segments $\widetilde{\gamma}_{1,k,l}$ of length  $L_0(r)+2$. For each $i$ and each $l$, we can choose a sub-segment 
$\widetilde{\gamma}_{1,k,l}^i$ of $\widetilde{\gamma}_{1,k,l}^i$ and a sub-quadrangle $\widetilde{D}_{k,l}(h_i)$ of $\widetilde{D}_{k}(h_i)$ of width at most $4r+4C_\Box + 16\delta$ between $\widetilde{\gamma}_{1,k,l}$ and $\widetilde{\gamma}_{1,k,l}^i$.   Among the given $2^{24r + 8 C_\Box + 78 \delta + 2L}N_2(r)N$ group elements, we can now then choose $N_2(r)N$ of them such that for each $l$, the associated sub-quadrangles $(\widetilde{D}_{1,k,l}(h_i))_i$ are all of the same \textit{shape}: corridor or staircase. \\

\textbf{Step 6.} If for some $l$, all the $(\widetilde{D}_{1,k,l}(h_i))_i$  were corridors, then the fact that $N_2(r)\geq N_0(r)$ would yield a contradiction with Lemma \ref{lem:corridor}, so let us assume that they all are staircases. In particular, for each such $i$, $\widetilde{D}_{1,k}(h_i)$ contains at least $24r + 8 C_\Box + 78 \delta + 2L$ corners on $\widetilde{\gamma}_{k,1}$. Since we have at least $(4r+2)N_
{\mathrm{quad}}(r)N$ group elements, at least $(4r+2)N$ of them define isometric sub-quadrangles. Since $\widetilde{D}_{1,k}(h_i)$ has width at most $4r+12C_\Box$ and contains at least 
$$24r + 8 C_\Box + 78 \delta + 2L= 2(4r+4C_\Box + 16\delta) + 2(L + 2(4r + 16 \delta))$$
corners, it follows from Lemma \ref{lem:staircase} that there exists a geodesic segment $P$ of length $L + 2(4r + 16 \delta)$ contained in all the $g_i\gamma$. \\

\textbf{Step 7.} It now follows that the segments $g_i^{-1}P$ are contained in $\gamma$ and are at Hausdorff distance at most $4r+16\delta$ of one another by Lemma \ref{lem:distance_translates}. Since there are at least $(8r+32\delta+1)N$ such elements $g_i^{-1}$, it follows that the action of at least $N$ of them coincide on a geodesic segment of length $L$, hence $N$ distinct elements of $G$ fix pointwise a sub-segment of length $L$ of $\Lambda_1$, a contradiction. \\

This concludes the proof that for vertices $x,y$ of the axis $\Lambda_1$ at distance at least $L_{\mathrm{axis}}(r)$, at most $N_{\mathrm{axis}}(r)$ elements of $G$ move $x$ and $y$ by a distance of at most $r$. In the final step, we deal with arbitrary points of $X$.\\

\textbf{Step 8.} 
Recall that the tubular constant $C_2$ was introduced in Definition \ref{def:tubular}. Let $m$ be an integer such that $m|g|_2> C_2$. Let $n$ be an integer that will be fixed later, let $x$ be a vertex of $X$, and let $h$ be a group element such that $d_1(x,hx), d_1(g^{n+2m}x,hg^{n+2m}x) \leq r$. Consider the (unique) CAT(0) geodesic between $x$ and $g^{n+2m}x$. Let also $x_2$ be the projection of $x$ on $\Lambda_2$ (and thus $g^{n+2m}x_2$ is the projection of $g^{n+2m}x$ on $\Lambda_2$).

Let $u$ be a point of the geodesic between $x$ and $g^{n+2m}x$ which projects to the point $g^mx_2 \in \Lambda_2$. Since $d_2(x,g^mx)=m|g|_2>C_2$, there exists a point $y$ in the sub-segment between $x$ and $u$ and a point $y_2\in \Lambda_2$ in between $x_2$ and $g^mx_2$ such that $d_2(y,y_2) \leq C_2$. Analogously, let $v$ be a point of the geodesic between $x$ and $g^{n+2m}x$ which projects to the point $g^{m+n}x_2 \in \Lambda_2$. Since $d_2(g^{n+m}x_2,g^{n+2m}x_2)=m|g|_2>C_2$, there exists a point $z$ in the sub-segment between $v$ and $g^{n+2m}x$ and a point $z_2\in \Lambda_2$ in between $g^{n+m}x_2$ and $g^{n+2m}x_2$ such that $d_2(z,z_2) \leq C_2$. Since $\Lambda_1$ and $\Lambda_2$ are at Hausdorff distance at most $\delta$ by Definition-Lemma \ref{lem:distance_axes}, choose points $y_1$, $z_1$ on $\Lambda_1$ which are $\delta$-close from $y_2$ and $z_2$ respectively. 

Since  $d_1(y,hy), d_1(g^{n+2m}z,hg^{n+2m}z) \leq r$ by convexity of the $\ell_2$-metric, it follows that $$d_1(y_1,hy_1), d_1(z_1,hz_1) \leq 2r+ 2\delta + 4 C_2.$$ Moreover, we have 
  \begin{align*} 
  d_2(y_1,z_1)&\geq d_2(x_2,g^{n+2m}x_2) - d_2(x_2,y_2) - d_2(y_2,y_1) - d_2(z_1,z_2) - d_2(z_2,g^{n+2m}x_2)\\
   &\geq  n|g|_2 - 2\delta,
 \end{align*}
 and thus $d_1(y_1,z_1) \geq n|g|_2 - 2\delta$.  Thus, we first choose an integer $n(r)$ (which is independent of the point $x$) such that  $ n(r)|g|_2 - 2\delta \geq L_{\mathrm{axis}}(2r+2\delta+4C_2)$. Now set:
   \begin{align*} 
  m(r)&:= n(r) + 2m,\\
  N(r) &:= N_{\mathrm{axis}}(2r+2\delta+4C_2).
 \end{align*}
 It thus follows that for every $x$ of $X$, there exist at most $N(r)$ group elements that move $x$ and $g^{m(r)}x$ by at most $r$, for otherwise we could find two points of $\Lambda_1$ at distance at least $L_{\mathrm{axis}}(2r+2\delta+4C_2)$ which are moved by at most $2r+2\delta+4C_2$ by $N_{\mathrm{axis}}(2r+2\delta+4C_2)$ distinct group elements, a contradiction.

\subsection{Deducing Theorem A}
\label{sec:ThmA}

In this section, we explain how to modify the proof of Theorem E to obtain Theorem A. The proof is almost identical: the proof of Theorem E took place `along the axis' of a strongly contracting WPD element, which displays features of hyperbolic geometry. Under the hypotheses of Theorem A, the whole complex $X$ is hyperbolic, and the following lemmas can be adapted to deal with arbitrary geodesics of $X$, rather than sub-segments of the combinatorial axis considered in the previous section:\\

Lemma \ref{def:tubular} still holds in general hyperbolic metric spaces: For \textit{any} geodesic $\gamma$ of length at least $8\delta'$, $\delta'$ the hyperbolicity constant of the space, and any two points $x, y$ of the space that project on $\gamma$ at distance at least $8\delta'$, then any geodesic from $x$ to $y$ meets the $8\delta'$-neighbourhood of $\gamma$ (this follows for instance from the Tree Approximation Threorem for hyperbolic spaces \cite[Theorem 8.1]{CoornaertDelzantPapadopoulos}).

Lemma \ref{lem:distance_axes} is reformulated as follows: For every combinatorial geodesic $\gamma$ of the hyperbolic CAT(0) square complex $X$, the unique CAT(0) geodesic between the endpoints of $\gamma$ stays $2C_\Box$-close to $\gamma$, where the no-square constant $C_\Box$ is reinterpreted as a constant such that $X$ does not contain isometrically embedded $C_\Box\times C_\Box$ grids.

Analogously, Lemma \ref{lem:distance_translates} is reformulated as follows:  Let $\gamma$ be a geodesic segment and let $h$ be a group element that moves the endpoints of $\gamma$ by a distance of at most $r$ (for the $\ell_1$-metric). Let $\gamma'$ be a sub-segment of $\gamma$. Then $\gamma'$ and $h\gamma'$ are at Hausdorff distance at most $2r + 16C_\Box$ (for the $\ell_1$-metric).

Finally, Lemma \ref{lem:width} is reformulated as follows: Let $\gamma$ be a combinatorial geodesic of $X$ of length at least $ 2C_{\Box}$, and let $h$ be a group element that moves the endpoints of $\gamma$ by at most $r$. Suppose that there exists a quadrangle $\varphi: Q \ra X$ between $\gamma$  and $h\gamma$. Then $Q$ has width at most $2r+4C_\Box $.\\

With these modifications, we can prove Theorem A by adapting the proof of Theorem E presented in Sections \ref{sec:corridor} and \ref{sec:finish}.\\ 

We first start by proving the analogue of Proposition \ref{lem:corridor}, namely: For every $r>0$, there exist constants $L_0(r), N_0(r)$ such that the following holds: For every combinatorial geodesic  $\gamma$ between two points $x, y$ of $X$, for every sub-segment $\gamma_- \subset \gamma$ of length at least $L_0(r)$, there exist at most $N_0(r)$ elements $h$ of $G$ such that $d(x,hx), d(y,hy)<r$, and such that 
 there exist a quadrangle between $\gamma$ and $h \gamma$ 
and a sub-quadrangle between $\gamma_-$ and a sub-segment $\gamma_+ \subset h\gamma$ which is a corridor of width at most $4r+36C_\Box = 2(2r+16C_\Box) + 4C_\Box$.

The proof, for abritrary points of $X$ instead of points of $\Lambda_1$, is almost identical to the proof presented in Section \ref{sec:corridor}, up to modifying the constants according to the aforementioned changes (in particular, the constant $|g|_1$ is no longer needed in the definition of $L_0(r)$ and $N_0(r)$ ). The only notable difference is that Step 3 is no longer needed. Indeed, Steps 1-2 and 4-7 yield a geodesic segment of $X$ of length $L$ pointwise fixed by $N$ group elements, which is sufficient to contradict the weak acylindricity of the action (in Section \ref{sec:corridor}, Step 3 was there to show that such a geodesic segment is actually contained in a combinatorial axis of the WPD element $g$, which was necessary to contradict the weak WPD condition).\\

We then adapt the results of Section \ref{sec:finish} for abritrary points of $X$ instead of points of $\Lambda_1$, namely, we prove the following: For every $r>0$, there exist constants $L(r), N(r)$ such that the following holds: For every two points $x, y$ of $X$ at distance at least $L(r)$, there exist at most $N(r)$ elements $h$ of $G$ such that $d(x,hx), d(y,hy)<r$.

The proof of Steps 1-7 is identical, and yield a geodesic segment of $X$ of length $L$ pointwise fixed by $N$ group elements, which is sufficient to contradict the weak acylindricity of the action (in Section \ref{sec:finish}, Steps 1-7 only dealt with points of the axis $\Lambda_1$ and Step 8 dealt with arbitrary points of $X$).

\section{The geometry of generalised Higman groups}\label{sec:Higman}

The \textit{generalised Higman groups} $H_n$, $n \geq 5$ were defined in \cite{HigmanGroup},  as the groups with the following presentation: 

$$H_n:= \langle a_i, i \in \bbZ / n \bbZ ~|~ a_ia_{i+1}a_i^{-1}= a_{i+1}^{2}, i \in \bbZ / n \bbZ  \rangle.$$

These groups can naturally be seen as negatively curved polygons of groups (see for instance \cite[Theorem 12.28 and Section 12.29]{BridsonHaefliger}). The local group associated to the polygon is trivial. Order cyclically the edges of the polygon into a sequence $(e_i)$. The local group associated to each edge $e_i$ of the polygon is a copy of $\bbZ$ with a chosen generator $a_i$. The local group associated to each vertex of the polygon is the Baumslag--Solitar group $BS(1,2)$.  Finally, for each vertex $v$ contained in edges $e_i$ and $e_{i+1}$, the local maps $G_{e_{i}}\ra G_v$, $G_{e_{i+1}}\ra G_v$ send the generators $a_i, a_{i+1}$ respectively to generators $b_i, b_{i+1}$ of $BS(1,2)$ satisfying the relation $b_ib_{i+1}b_i^{-1}= b_{i+1}^{2}$.

Thus, $H_n$, $n\geq5$ acts cocompactly on a CAT($-1$) polygonal complex with strict fundamental domain an $n$-gon. Stabilisers of faces are trivial, stabilisers of edges are infinite cyclic, and stabilisers of vertices are isomorphic to $BS(1,2)$. Moreover, for every edge $e$ of $X_n$ with vertices $v$ and $v'$, one of the morphisms $G_e \hra G_v$, $G_e \hra G_{v'}$ is conjugated to $\langle a \rangle \hra \langle a,b | aba^{-1}=b^2 \rangle$ while the other one is conjugated to  $\langle b \rangle \hra \langle a,b | aba^{-1}=b^2 \rangle$.

\begin{lem}\label{lem:Higman_weak_acylindricity}
 If $v$ and $v'$ are vertices of $X_n$ at distance at least $3$, then the stabilisers of $v$ and $v'$ intersect trivially. In particular, $H_n$ acts weakly acylindrically on $X_n$ for every $n \geq 5$.
\end{lem}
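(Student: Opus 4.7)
The plan is to split the proof in two: first establishing $G_v\cap G_{v'}=\{1\}$ for $d(v,v')\geq 3$, then deducing weak acylindricity. For the first claim, I would take $g\in G_v\cap G_{v'}$ and use that $X_n$ is CAT($-1$) to conclude that $g$ fixes pointwise the unique CAT($-1$) geodesic $\gamma$ from $v$ to $v'$. The next step is to distinguish two cases according to whether $\gamma$ meets the interior of a face. If $\gamma$ meets the open interior of some face $P$, then $g$ fixes an interior point of $P$; since $P$ is the unique face containing that point in its interior, $g$ stabilises $P$ setwise, and triviality of $G_P$ forces $g=1$.

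If instead $\gamma$ stays in the $1$-skeleton, I would use that $\gamma$ is then an edge-path $v=u_0,e_0,u_1,\ldots,e_{k-1},u_k=v'$ of combinatorial length $k\geq 3$, pointwise fixed by $g$, so that $g\in G_{e_{i-1}}\cap G_{e_i}$ at every intermediate vertex $u_i$. The core of the argument will be the algebraic analysis of this intersection inside $G_{u_i}\cong BS(1,2)=\mathbb{Z}[1/2]\rtimes \mathbb{Z}$: I expect to show (by a direct computation using the normal form $(x,m)$ with $x\in\mathbb{Z}[1/2]$, $m\in\mathbb{Z}$) that $\langle a\rangle$ is malnormal in $BS(1,2)$ and that any conjugate of $\langle a\rangle$ meets any conjugate of $\langle b\rangle$ trivially. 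This will imply that $G_{e_{i-1}}\cap G_{e_i}$ can be non-trivial only when both $e_{i-1}$ and $e_i$ are of $b$-type at $u_i$. Since by hypothesis each edge of $X_n$ is of $a$-type at one endpoint and of $b$-type at the other, no edge can be of $b$-type at two consecutive intermediate vertices; hence for $k\geq 3$ the middle edge $e_1$ would have to be of $b$-type at both $u_1$ and $u_2$, a contradiction, and the intersection must be trivial at some intermediate vertex, yielding $g=1$.

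For the weak acylindricity, I would observe that the stabiliser of any point of $X_n$ is contained in the stabiliser of a nearby vertex: points in face interiors have trivial stabiliser, and for points in edge interiors the stabiliser equals the edge stabiliser (since the two endpoints of each edge lie in different $G$-orbits and cannot be swapped). Then I would pick a constant $L$ (say $L=5$) so that any two points $x,y$ at distance at least $L$ can be replaced by nearby vertices $\bar x,\bar y$ with $d(\bar x,\bar y)\geq 3$, giving $\mathrm{Stab}(x)\cap \mathrm{Stab}(y)\subseteq G_{\bar x}\cap G_{\bar y}=\{1\}$ and thus weak acylindricity with $N=1$. The hardest part will be the malnormality and intersection claims inside $BS(1,2)$, combined with the combinatorial observation that $b$-type edges cannot chain along two consecutive vertices — this combination is what localises the "distance $\geq 3$" threshold in the statement.
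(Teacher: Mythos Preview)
Your proposal is correct and follows essentially the same route as the paper: the paper observes that fixed-point sets are trees in the $1$-skeleton (which absorbs your Case~1) and then, exactly as you do, notes that along a path of three edges the middle edge must be of $a$-type at one of its endpoints, where the intersection of the two incident edge groups is trivial. The only cosmetic difference is that the paper outsources the $BS(1,2)$ computation (your malnormality/intersection claims) to \cite[Lemma~2.1]{MartinHigmanCubical}, whereas you sketch it directly via the $\mathbb{Z}[1/2]\rtimes\mathbb{Z}$ normal form.
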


\begin{proof}
 Since $H_n$ acts on the CAT(0) polygonal complex $X_n$ with trivial face stabilisers, fixed point sets of subgroups of $H_n$ are trees in the $1$-skeleton of $X_n$. If an element $h$ of $H_n$ stabilises a sequence $e_1, e_2, e_3$ of three adjacent edges, then for some vertex $v$ of $e_2$, the inclusion $G_e \hra G_{v}$  is conjugated to $\langle a \rangle \hra \langle a,b | aba^{-1}=b^2 \rangle$. It then follows that for every edge $e' \neq e$ containing $v$, the intersection $G_e \cap G_{e'}$ is trivial. Indeed, this is exactly Lemma 2.1 of \cite{MartinHigmanCubical} in the case of $4$ generators; the proof being purely about links of vertices, it generalises without any change to $n \geq 5$ generators. Thus, the intersection $G_{e_1} \cap G_{e_2} \cap G_{e_3}$ is trivial.
\end{proof}

Notice that, for $n \geq 5$, the CAT(-1) polygonal complex $X_n$ can be naturally subdivided into a hyperbolic CAT(0) square complex. Indeed, the subgraph of the one-skeleton of the first barycentric subdivision of $X_n$, obtained by removing edges corresponding to the inclusion of a vertex of $X_n$ in a polygon of $X_n$, is naturally the one-skeleton of a CAT(0) square complex. Moreover, such a CAT(0) square complex is quasi-isometric to $X_n$, and thus hyperbolic. In particular, Theorem A implies the following:

\begin{cor}
 For $n \geq 5$, the action of $H_n$ on $X_n$ is acylindrical. \qed
\end{cor}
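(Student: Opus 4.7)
The plan is to reduce the corollary to Theorem A applied to the hyperbolic CAT(0) square complex $X_n'$ obtained as the subdivision of $X_n$ described in the paragraph preceding the statement. Let $X_n'$ denote this subdivision. Because the construction is canonical (barycentric subdivision, then removal of certain edges, all done $H_n$-equivariantly), the action of $H_n$ on $X_n$ extends naturally to a combinatorial action on $X_n'$, and the identity map $X_n \to X_n'$ is an $H_n$-equivariant quasi-isometry. In particular $X_n'$ is hyperbolic since $X_n$ is CAT(-1). Thus the two hypotheses of Theorem A that need to be verified are that $X_n'$ is a hyperbolic CAT(0) square complex (already noted) and that the action of $H_n$ on $X_n'$ is weakly acylindrical.

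To verify weak acylindricity of the action on $X_n'$, I would argue as follows. Each vertex of $X_n'$ is either a vertex of $X_n$, a midpoint of an edge of $X_n$, or the barycentre of a polygon of $X_n$; its $H_n$-stabiliser coincides with the stabiliser of the corresponding face of $X_n$. Stabilisers of polygons are trivial, stabilisers of edges are infinite cyclic subgroups of some $BS(1,2)$ vertex stabiliser, and stabilisers of vertices are $BS(1,2)$. Hence every stabiliser of a vertex of $X_n'$ is contained in the stabiliser of some vertex of $X_n$ lying in the corresponding face. Since the quasi-isometry $X_n \to X_n'$ has uniform constants, there is some constant $L \geq 0$ such that whenever two vertices $w, w'$ of $X_n'$ satisfy $d_{X_n'}(w, w') \geq L$, any two vertices $v, v'$ of $X_n$ lying in the closed faces corresponding to $w, w'$ must satisfy $d_{X_n}(v, v') \geq 3$. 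By Lemma \ref{lem:Higman_weak_acylindricity} the stabilisers of such $v$ and $v'$ intersect trivially, and hence so do the stabilisers of $w$ and $w'$. Therefore at most one element (the identity) fixes both $w$ and $w'$, which establishes weak acylindricity of the $H_n$-action on $X_n'$ with constants $L$ and $N = 1$.

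Theorem A then yields that the action of $H_n$ on $X_n'$ is acylindrical. Finally, acylindricity is preserved under equivariant quasi-isometries between geodesic metric spaces: given $r \geq 0$, any element of $H_n$ moving two points of $X_n$ by at most $r$ moves the corresponding points of $X_n'$ by at most some $r' = r'(r)$, and two points of $X_n$ at large enough distance correspond to two points of $X_n'$ at the distance required by the acylindricity of the action on $X_n'$. Pulling back the constants from $X_n'$ thus gives the acylindricity of the action of $H_n$ on $X_n$.

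The only step requiring care is the transfer of weak acylindricity from $X_n$ to $X_n'$, since the vertex set of $X_n'$ is strictly larger than that of $X_n$; but as sketched, the identification of stabilisers of vertices of $X_n'$ with stabilisers of faces of $X_n$, combined with the $H_n$-equivariant quasi-isometry between the two complexes, makes this bookkeeping routine. The remaining steps are a direct invocation of Theorem A and of the quasi-isometric invariance of acylindricity for cocompact actions.
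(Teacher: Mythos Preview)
Your proposal is correct and follows exactly the approach the paper intends: pass to the square subdivision $X_n'$, invoke Lemma \ref{lem:Higman_weak_acylindricity} for weak acylindricity, apply Theorem A, and transfer acylindricity back along the equivariant quasi-isometry. The paper leaves the two transfer steps (weak acylindricity from $X_n$ to $X_n'$, and acylindricity from $X_n'$ back to $X_n$) entirely implicit, so your write-up is in fact more careful than the paper's own one-line justification.
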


We now give two consequences of the acylindricity of the action, which do not follow automatically from the abstract acylidrical hyperbolicity of the group.

\begin{cor}[Strong Tits alternative for generalised Higman groups]
 For $n \geq 5$, the group $H_n$ satisfies the following form of Tits alternative: A non-cyclic subgroup of $H_n$ is either contained in a vertex stabiliser, hence embeds in $BS(1,2)$, or is acylindrically hyperbolic. \qed
\end{cor}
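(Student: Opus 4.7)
The strategy is to combine the acylindricity of the action (established in Theorem B and the subsequent corollary) with Osin's classification of acylindrical actions on hyperbolic spaces \cite{OsinAcylindricallyHyperbolic}. Since the subdivision of $X_n$ into a CAT(0) square complex is quasi-isometric to $X_n$ (which is CAT($-1$), hence hyperbolic), and acylindricity on hyperbolic geodesic metric spaces is invariant under equivariant quasi-isometries (up to adjusting constants), the action of $H_n$ on $X_n$ itself is acylindrical. The key observation is that restricting an acylindrical action to a subgroup remains acylindrical, so for any subgroup $H \leq H_n$, the action $H \curvearrowright X_n$ is acylindrical.

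The plan is to apply Osin's trichotomy for acylindrical actions on hyperbolic spaces: any such action is either elliptic (bounded orbits), or has $H$ virtually cyclic containing a loxodromic element, or $H$ contains infinitely many pairwise independent loxodromic elements and is acylindrically hyperbolic. I would analyse each case as follows. The second case is impossible for a non-cyclic subgroup of $H_n$: indeed, $H_n$ is torsion-free (it acts on the CAT(0) complex $X_n$ with torsion-free local stabilisers, namely $BS(1,2)$, $\mathbb{Z}$, and the trivial group), so every virtually cyclic subgroup of $H_n$ is in fact cyclic. The third case is precisely the desired conclusion.

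For the first case, if the orbit of $H$ is bounded in $X_n$, then since $X_n$ is CAT($-1$) (in particular CAT(0) and complete), $H$ admits a global fixed point $x \in X_n$ via the standard circumcentre argument. The point $x$ lies in the interior of a unique open cell, whose stabiliser contains $H$: if $x$ lies in the interior of a polygon, then $H$ is trivial, contradicting non-cyclicity; if $x$ lies in the interior of an edge, then $H$ embeds in the edge stabiliser, which is infinite cyclic, again contradicting non-cyclicity; hence $x$ must be a vertex and $H$ embeds in a vertex stabiliser, which is isomorphic to $BS(1,2)$ as recalled in the description of the complex of groups structure.

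The main obstacle is essentially bookkeeping: ensuring the acylindricity transfers cleanly to the original CAT($-1$) model and that the stabiliser analysis at a fixed point exhausts all possibilities. Once these are in place, the argument is a direct application of Osin's classification, and no further geometric input beyond what has already been established in the paper is needed.
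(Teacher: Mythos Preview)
Your argument is correct and is precisely the standard way to derive this corollary; the paper itself gives no proof (the statement is marked with \qed), so your Osin-trichotomy analysis together with the torsion-freeness of $H_n$ and the CAT(0) fixed-point argument is exactly what one would supply to fill in the omitted details.
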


\begin{prop}
 For $n\geq 5$, $H_n$ is residually $F_2$-free, i.e. each element of $H_n$ survives in a quotient of $H_n$ which does not contain a non-abelian free subgroup.
\end{prop}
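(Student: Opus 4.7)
The plan is to combine the acylindrical hyperbolicity of $H_n$ just established with modern results on quotients of acylindrically hyperbolic groups.

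First, I would verify that $H_n$ is torsion-free. As the fundamental group of a developable, non-positively curved polygon of groups whose local groups at vertices ($BS(1,2)$), along edges ($\bbZ$), and on the face (trivial) are all torsion-free, every torsion element of $H_n$ is conjugate into a local group, and hence is trivial. In particular, the unique maximal finite normal subgroup of $H_n$ is trivial. Combined with the acylindrical hyperbolicity (Corollary D) and the non-elementarity of the action on $X_n$ (guaranteed by the existence of loxodromic elements acting on a genuinely hyperbolic complex), this places $H_n$ firmly within the scope of the strongest quotient constructions available for acylindrically hyperbolic groups.

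Given a non-trivial $g \in H_n$, the goal is then to produce a surjection $\pi: H_n \twoheadrightarrow Q$ such that $\pi(g) \neq 1$ and $Q$ contains no non-abelian free subgroup. Such quotients are supplied by the construction of infinite periodic quotients of acylindrically hyperbolic groups (building on Ol'shanski\u{\i}'s Tarski monster construction and adapted to the acylindrical setting by Coulon), or alternatively by the Dahmani--Guirardel--Osin rotating families and group-theoretic Dehn filling framework. Concretely, under the two assumptions on $H_n$ above, one may construct a quotient of sufficiently large bounded exponent in which any prescribed non-trivial element survives. Since any group of bounded exponent is torsion and $F_2$ is torsion-free, such a quotient necessarily contains no non-abelian free subgroup, as required.

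The main obstacle will be ensuring survival of the chosen $g$ in the final quotient while simultaneously driving every loxodromic element to finite order. The flexibility of Dehn filling-type constructions --- which allow the new relators to be chosen far from any prescribed finite set of elements in the Cayley graph --- provides the required control: by successively introducing relations that trivialise independent loxodromic elements in a manner that avoids trivialising any conjugate of $g$, one can push $g$ to a non-trivial image in a terminal quotient of bounded exponent. The outcome is that every non-trivial $g$ admits such a quotient, which is precisely the statement of the proposition.
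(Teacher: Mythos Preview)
Your overall strategy is close to the paper's, but there is a genuine gap in the key step. You assert that one may construct a quotient of $H_n$ of bounded exponent in which a given element survives, citing Coulon's periodic-quotient machinery. However, Coulon's construction (and the Dahmani--Guirardel--Osin Dehn filling it builds on) only forces \emph{loxodromic} elements to become torsion: one kills normal closures of high powers of loxodromic elements, and the resulting quotient is a \emph{partial} periodic quotient. The elliptic elements --- in particular the generators $a_i$, which lie in vertex stabilisers isomorphic to $BS(1,2)$ --- retain infinite order in such quotients. So the quotient you obtain is not of bounded exponent, and you cannot conclude $F_2$-freeness merely from the absence of torsion-free elements. (Recall moreover that $H_n$ has no non-trivial finite quotients, so there are no cheap finite periodic quotients available either.)

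What is missing is precisely the extra geometric argument the paper supplies. After producing a quotient $Q$ in which every loxodromic element of $H_n$ has become torsion, one must still show $Q$ contains no non-abelian free subgroup. The paper does this by proving that every non-abelian free subgroup $F \leq H_n$ contains a loxodromic element: if every element of $F$ were elliptic, then either all their fixed-point sets intersect (forcing $F$ into a vertex stabiliser $\cong BS(1,2)$, which is solvable and hence contains no $F_2$), or two such fixed-point sets are disjoint, in which case a product of the two elements is shown to be loxodromic via a direct CAT(0) angle argument. This step genuinely uses the structure of $H_n$ --- specifically that vertex stabilisers contain no free subgroups --- and is not a formal consequence of acylindrical hyperbolicity alone. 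Your proposal would become correct if you replaced ``bounded exponent'' by ``all loxodromics become torsion'' and then inserted this argument.
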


\begin{proof}
 Let $h$ be an element of $H_n$. We  construct by induction a sequence of quotients of $H_n$ 
 $$H_n \xrightarrow{\pi_1} Q_1 \xrightarrow{\pi_2} Q_2 \rightarrow \cdots$$
 such that at each stage, $Q_k$ acts acylindrically on a hyperbolic metric space $Y_k$, and $h$ projects to a non-trivial element of $Q_k$. Such a construction uses the theory of rotating families of Dahmani--Guirardel--Osin and follows the construction in the proof of \cite[Theorem 8.9]{DahmaniGuirardelOsin}; we refer the reader there for additional details. The group $Q_0:=H_n$ acts acylindrically on $Y_0:=X_n$. Let us order all the elements of $H_n$ that act hyperbolically on $X_n$ in a sequence $(h_k)_{k \geq 1}$. Let us assume that the quotients $Q_0, Q_1, \ldots Q_k$ and hyperbolic spaces $Y_0, \ldots, Y_k$ are defined. We look at the projection $\bar{h}_{k+1}$ of $h_k$ in $Q_k$. Since $Q_k$ acts acylindrically on the hyperbolic space $Y_k$, such an element acts either hyperbolically or elliptically on $Y_k$ by a result of Bowditch \cite[Lemma 2.2]{BowditchTightGeodesics}.
 
 If $h_{k+1}$ projects to a an element of $Q_k$ acting hyperbolically on $Y_k$, we choose an integer $\alpha_{k+1} \geq 1$ such that 
 \begin{itemize}
 \item the coned-off space associated to the action of $\ll \bar{h}_{k+1}^{\alpha_{k+1}}\gg$ on $Y_k$ is hyperbolic,
  \item the quotient $\rquotient{Q_k}{\ll\bar{h}_{k+1}^{\alpha_{k+1}}\gg}$ acts acylindrically on the aforementioned  coned-off space,
  \item the projection map $Q_k \ra \rquotient{Q_k}{\ll\bar{h}_{k+1}^{\alpha_{k+1}}\gg}$ embeds a chosen metric ball of $Q_k$ containing the projection $\bar{h}$ of $h$.
 \end{itemize}
  We then define $Q_{k+1}:=\rquotient{Q_k}{\ll \bar{h}_{k+1}^{\alpha_{k+1}}\gg}$ and $Y_{k+1}$ is the associated coned-off space.

  If $h_{k+1}$ projects to a torsion $\bar{h}_{k+1}$ element of $Q_k$, we set $Q_{k+1}:= Q_{k}$, $Y_{k+1}:= Y_k$, and $\alpha_{k+1}\geq 1$ be an integer such that $\bar{h}_{k+1}^{\alpha_{k+1}}$ is trivial. 
  
 It thus follows that the quotient 
 $$ Q:= \rquotient{H_n}{\ll h_k^{\alpha_k}, k \geq 1\gg}$$
 of $H_n$ is such that $h$ projects to a non-trivial element. 
 
 Let us prove by contradiction that $Q$ does not contain a non-abelian free subgroup. If $Q$ contained such a subgroup, the preimage of such a subgroup under the projection $H_n \ra Q$ would be a free subgroup $F$ of $H_n$. As elements acting hyperbolically on $X_n$ are mapped to torsion elements of $Q$ by construction, we will derive a contradiction if we can prove that $F$ necessarily contains an element acting hyperbolically.\\
 
 \noindent \textbf{Claim:} The subgroup $F$ contains an element acting hyperbolically on $X_n$. \\
 
 If that was not the case, then every element of $F$ acts elliptically on $X_n$ (since $X_n$ has only finitely many isometry types of cells \cite{BridsonSemiSimple}). If the fixed point sets of elements of $F$ pairwise intersect, they globally intersect by a version of the Helly Theorem \cite[Proposition 5.3]{KleinerHellyCAT(0)}, hence $F$ is contained in a vertex stabiliser, which is absurd as $BS(1,2)$ does not contain non-abelian free subgroups. Thus, there must exist two elements $a,b$ of $F$ with disjoint fixed point sets. One could adapt the proof of \cite[Proposition 3.2]{MartinHigmanCubical} to show that this implies that there exists an element acting hyperbolically on $X_n$ in the subgroup generated by $a$ and $b$. In order to remain as self-contained as possible, we give a direct proof that the subgroup $\langle a, b\rangle$ contains an element acting hyperbolically, using the CAT(0) geometry of the space. 
 
 By Lemma \ref{lem:Higman_weak_acylindricity}, the fixed point sets of $a$ and $b$ are subcomplexes of diameter at most $2$. Let $x_a$, $x_b$ be points of the fixed point sets of $\mbox{Fix}(a)$, $\mbox{Fix}(b)$ which realise the distance $\alpha>0$ between these fixed point sets. Let $L$ be the CAT(0) geodesic between $v_a$ and $v_b$. 
 
 We now show that the angle at $x_a$ (respectively $x_b$) between $L$ and $aL$ (respectively $L$ and $bL$) is at least $\pi$. Let $\sigma$ be the (unique) minimal face of $X$ containing the `germ of $L$' at $x_a$, that is, the minimal face of $X$ such that sufficiently small neighbourhoods of $x_a$ in $L$ are contained in $\sigma$. If $\sigma$ is an edge (and thus, $x_a$ is a vertex), then the result follows immediately: Indeed, for every edge $e$ of $X$ and $v$ one of its vertices, an element in the stabiliser of $v$ either fixes $e$ or sends it to an edge making an angle of at least $\pi$ with $e$, by definition of the action. So let us assume that $\sigma$ is a square. If $x_a$ is in the interior of an edge of $\sigma$, then $L$ must be perpendicular to that edge since $L$ realises the distance between the fixed point sets, and the result follows immediately. If $x_a$ is a vertex of $\sigma$, then no edge of $\sigma$ is in $\mbox{Fix}(a)$ since $L$ realises the distance between the two fixed point sets. In particular, for the two edges $e, e'$ of $\sigma$ containing $x_a$, the angle at $x_a$ between $e$ and $ae$ (respectively $e'$ and $ae'$) is at least $\pi$ by the previous remark, whence the angle at $x_a$ between $L$ and $aL$ is at least $\pi$.
 
 As the angle at $x_a$ (respectively $x_b$) between $L$ and $aL$ (respectively $L$ and $bL$) is at least $\pi$, the paths $L \cup aL$ and $L \cup bL$ are geodesics. In particular, the distance between $\mbox{Fix}(a)$ and $ba\mbox{Fix}(a)$ is at least $2\alpha-2$. Reasoning analogously, one shows that, for every $k \geq 2$,  the distance between $\mbox{Fix}(a)$ and $(ba)^k\mbox{Fix}(a)$ is at least $k\alpha-2$. In particular, $ba$ does not have bounded orbits, so it does not act elliptically, a contradiction. This proves the claim, and finishes the proof. 
\end{proof}

\bibliographystyle{plain}
\bibliography{Higman_Cubical}

\Address

\end{document}